\tikzstyle{vertex}=[circle, draw, inner sep=0pt, minimum size=3pt,fill=black]
\newcommand{\vertex}{\node[vertex]}
\newcommand{\T}[1]{\mathcal{T}_{#1}}
\newcommand{\C}[1]{\mathcal{C}_{#1}}
\newtheorem{theorem}{Theorem}[section]
\newtheorem{lemma}[theorem]{Lemma}
\newtheorem{corollary}[theorem]{Corollary}
\theoremstyle{definition}
\newtheorem{definition}[theorem]{Definition}
\newtheorem{result}[theorem]{Result}
\theoremstyle{remark}
\begin{document}

\title{Maximizing the mean subtree order}
\author{Lucas Mol 
and Ortrud R.\ Oellermann\thanks{Supported by an NSERC Grant CANADA, Grant number RGPIN-2016-05237}\\ University of Winnipeg\\
\small 515 Portage Avenue, Winnipeg, MB R3B 2E9\\
\small \href{mailto: l.mol@uwinnipeg.ca}{l.mol@uwinnipeg.ca}, \href{mailto: o.oellermann@uwinnipeg.ca}{o.oellermann@uwinnipeg.ca} }
\date{}

\maketitle

\begin{abstract}
This article focuses on properties and structures of trees with maximum mean subtree order in a given family; such trees are called \textit{optimal} in the family.  Our main goal is to describe the structure of optimal trees in $\T{n}$ and $\C{n}$, the families of all trees and caterpillars, respectively, of order $n$.    We begin by establishing a powerful tool called the \emph{Gluing Lemma}, which is used to prove several of our main results. In particular, we show that if $T$ is an optimal tree in $\T{n}$ or $\C{n}$ for $n\geq 4$, then every leaf of $T$ is adjacent to a vertex of degree at least $3$.  We also use the Gluing Lemma to answer an open question of Jamison, and to provide a conceptually simple proof of Jamison's result that the path $P_n$ has minimum mean subtree order among all trees of order $n$.  We prove that if $T$ is optimal in $\T{n}$, then the number of leaves in $T$ is $\mathrm{O}(\log_2 n)$, and that if $T$ is optimal in $\C{n}$, then the number of leaves in $T$ is $\mathrm{\Theta(\log_2 n)}$.  Along the way, we describe the asymptotic structure of optimal trees in several narrower families of trees.
\end{abstract}

%
%
%

\section{Introduction}

The study of the mean order of the subtrees of a given tree was initiated by Jamison \cite{j1,j2}.  Given a tree $T$, the subtrees of $T$ form a meet-distributive lattice in which the subtrees of order $k$ are exactly the elements of rank $k$ (c.f.~\cite{j1}).  So the number of subtrees of order $k$ is the $k$th \emph{Whitney number} of the lattice of subtrees, and the mean subtree order gives a rough description of the shape of this lattice. We also point out that an interesting `inverse correlation' between the number of subtrees of a tree and the average distance of the tree (or its Wiener index) was established in \cite{Wagner2007}.

Jamison~\cite{j1} proved that among all trees of a fixed order $n$, the path $P_n$ has minimum mean subtree order $\tfrac{n+2}{3}$. However, the problem of describing the tree(s) of a given order having maximum mean subtree order remains largely open, despite the fact that other open problems from \cite{j1} have been solved~\cite{vw,ww,ww1}.  We will call a tree of maximum mean subtree order in a given family of trees an {\em optimal tree} for this family.  In this paper, we focus on determining properties and structures of optimal trees for several families of trees.  Families of particular interest include the family $\T{n}$ of all trees of a given order $n$, and the family $\C{n}$ of all caterpillars of order $n$.
Jamison conjectured that for all $n$, every optimal tree in $\T{n}$ is a caterpillar.  We refer to this as the {\em Caterpillar Conjecture}, and though we do not resolve this conjecture, we make significant progress on determining the structure of those trees that are optimal in $\T{n}$ or $\C{n}$.  

We now give a description of the layout of the article.  In Section~\ref{BackgroundSection}, we provide the necessary background for our work.  In Section~\ref{LemmaSection}, we establish a key result which we refer to as the \emph{Gluing Lemma}. Let $G$ and $H$ be disjoint graphs and let $v$ be a fixed vertex of $G$.  The graph obtained by identifying $v$ with some vertex of $H$ is called a graph obtained from $G$ and $H$ by \emph{gluing} $v$ to a vertex of $H$.  The Gluing Lemma states that for a given tree $Q$ with fixed vertex $v$, among all trees obtained by gluing $v$ to a vertex of a path $P$ (disjoint from $Q$), the maximum mean subtree order is obtained by gluing $v$ to a central vertex of $P$.   We use the Gluing Lemma to describe the structure of the optimal tree in several families, including the family of all trees of order $n$ with the same \emph{core} (see Definition~\ref{core}).  Next, we show that for $n\geq 4$, if $T$ is optimal in $\T{n}$, then every leaf of $T$ is adjacent to a vertex of degree at least $3$.  Finally, we show that every tree, not isomorphic to a path, has a \emph{standard $1$-associate} (defined in the paragraph following Theorem \ref{ShortLimbs}) with smaller mean subtree order. This answers a question of Jamison~\cite{j1} in the affirmative.

In Section~\ref{DoubleStarSection} and Section~\ref{BridgeSection}, we study the mean subtree order of two families of trees introduced by Jamison~\cite{j1}, which he called batons and bridges (see Section~\ref{BackgroundSection} for definitions). Jamison demonstrated sequences of batons and bridges of order $n$ whose mean subtree order grows as $n - \mathrm{o}(n)$.  In Section \ref{DoubleStarSection}, we show that among all subdivided double stars of a fixed order and with a fixed even (and sufficiently large) number of leaves, the batons are optimal.  Then we describe the asymptotic structure of the optimal batons among all batons of order $n,$ and show that the maximum mean subtree order among all batons of order $n$ is asymptotically $n-\mathrm{O}(\log_2 n)$.  This leads to an $\mathrm{O}(\log_2 n)$ upper bound on the number of leaves in any optimal tree in $\T{n}$ or $\C{n}$.  In Section \ref{BridgeSection}, we consider the problem of finding the optimal tree among all bridges of a fixed order.  The asymptotic structure of optimal bridges is described, and it is shown that the mean subtree order of an optimal bridge of a given order is significantly lower than the mean subtree order of an optimal baton of the same order.

Finally, in Section~\ref{CaterpillarLeaves}, we prove an upper bound on the mean subtree order of a caterpillar in terms of its number of leaves.  This leads to a proof that if $T$ is optimal in $\C{n}$, then the number of leaves of $T$ is $\Theta(\log_2 n)$.  Our work in this section also extends to several other families of trees.

\section{Background and Preliminaries}\label{BackgroundSection}

We use standard graph theoretic terminology throughout.  For a graph $G$, we use $V(G)$ to denote the vertex set of $G$. A vertex $v\in V(G)$ is called a \textit{leaf} of $G$ if it has degree at most $1$, and is called an \textit{internal vertex} of $G$ otherwise.  The \textit{eccentricity} of $v$ in $G$ is defined to be the greatest distance between $v$ and any other vertex of $G.$  The \textit{centre} of $G$ is the set of vertices of minimum \textit{eccentricity}.  A vertex belonging to the centre of a graph $G$ will be called a \textit{central vertex} of $G.$  In a tree $T$, the centre contains at most two vertices, and can be found by recursively deleting all leaves from $T$ until either one or two vertices remain; these remaining vertices form the centre of $T$.

For a given tree $T$ of order $n$, let $a_k(T)$ denote the number of subtrees of order $k$ in $T$. Then the generating polynomial for the number of subtrees of $T$, which we call the {\em subtree polynomial of $T$}, is given by
\[ \Phi_T(x) = \sum_{k=1}^n a_k(T)x^k\]
and the {\em (global) mean subtree order} of $T$ (sometimes called the {\em global mean} for short) is given by the logarithmic derivative
\[M_T=\frac{\Phi_T'(1)}{\Phi_T(1)}.\]

If $v$ is a vertex of $T$, let $a_k(T;v)$ denote the number of subtrees of $T$ of order $k$ containing $v$. Then the generating polynomial for the number of subtrees of $T$ containing $v$, which we refer to as the {\em local subtree polynomial of $T$ at $v$}, is given by
\[
\Phi_T(v;x)=\sum_{k=1}^{n}a_k(T;v)x^k
\]
and the mean order of the subtrees of $T$ containing $v$, called the {\em local mean of $T$ at $v$}, is given by the logarithmic derivative $M_{T, v}=\Phi_T'(v;1)/\Phi_T(v;1)$.

As a straightforward first example, the reader may wish to verify the following result from \cite{j1}, which can be proven using basic counting arguments.  We will use these formulae in Section \ref{LemmaSection}.

\begin{result}\label{paths}
Let $P_n:u_1\dots u_n$ be a path of order $n$.  For any $s\in\{1,\dots,n\}$:
\begin{align*}
\Phi_{P_n}(1) &= \tbinom{n+1}{2} &\Phi_{P_n}(u_s;1)&=s(n-s+1)\\
\Phi_{P_n}'(1) &= \tbinom{n+2}{3} &\Phi'_{P_n}(u_s;1)&=s(n-s+1)\tfrac{n+1}{2}.
 \end{align*}
\end{result}

Jamison \cite{j1} established the following relationship between the global mean and local mean subtree orders.

\begin{result} \label{global_local}
For any tree $T$ and vertex $v$ of $T$, $M_T \le M_{T,v}$, with equality if and only if $T$ is the trivial tree $K_1$.
\end{result}
\noindent
It was subsequently shown in \cite{ww} that $M_{T,v}\leq 2M_T$, and that the maximum local mean of a tree occurs at either a leaf or a vertex of degree $2$ (and that both cases are possible), thereby answering two open questions from \cite{j1}.

For a subtree $H$ of a tree $T$, the mean order of the subtrees containing $H$ is denoted by $M_{T,H}$ and $T/H$ denotes the tree obtained from $T$ by contracting $H$ to a single vertex. The following results that appeared in \cite{j1} will be useful in subsequent discussions.

\begin{result} \label{nested_trees}
Let $R$ and $S$ be subtrees of a tree $T$ of orders $r$ and $s$, respectively, such that $R$ is a subtree of $S$.  Then
\begin{enumerate}
\item $M_{T,S} = M_{T/R, S/R} + r-1$, and
\item $M_{T,R}<M_{T,S}\le M_{T,R} + \frac{s-r}{2}$ whenever $R\neq S.$
\end{enumerate}
\end{result}

The {\em density} of a tree $T$ is defined by $\mathrm{den}(T)=M_T/|V(T)|$, and equals the probability that a randomly chosen vertex belongs to a randomly chosen subtree of $T$. Since $M_{P_n}=\frac{n+2}{3}$, and this mean is smallest among all trees of order $n,$ the density of every tree exceeds $1/3$. It is natural to ask whether there is a constant $c <1$ which serves as upper bound on the density of all trees.  Jamison~\cite{j1} gave two families of trees whose densities are asymptotically $1$, thereby answering the aforementioned question in the negative.  We describe these constructions below as they motivate some of our work.

For integers $s\geq 1$ and $t\geq 0$, an $(s,t)$-{\em baton} is the tree of order $2s+t+2$ obtained by joining the central vertices of two stars $K_{1,s}$ by a path of order $t$ (if $t=0$, the central vertices are simply joined by an edge).  The density of the $(k,k^2)$-baton approaches $1$ as $k\rightarrow \infty$, so that there are batons of density arbitrarily close to $1.$    The batons form a subclass of the {\em subdivided double stars}. For positive integers $n,$ $r,$ and $s$, the {\em subdivided double star} $D_n(r,s)$ is the tree obtained by joining a vertex of degree $r$ in the star $K_{1,r}$ and a vertex of degree $s$ in the star $K_{1,s}$ by a path of order $n-r-s-2$.  We call this path of order $n-r-s-2$ the {\em interior path} of the subdivided double star.  Note that $D_n(s,s)$ is a baton, also called a \textit{balanced} subdivided double star.

The second class of high density trees, described by Jamison, have exactly two vertices of degree $3$ and all others of degree $1$ or $2$.  For $s\geq 1$ and $t\geq 0,$ an $(s,t)$-{\em bridge}, denoted by $B(s,t)$, is obtained by joining the central vertices of two paths of order $2s+1$ by a path of order $t$ (if $t=0$, then the vertices are simply joined by an edge). Thus $B(s,t)$ has order $4s+t+2$.  As for batons, there are bridges of density arbitrarily close to $1$; in particular, the density of $B(k^2,k^3)$ approaches $1$ as $k\rightarrow\infty$.  Bridges also belong to a larger family with the same basic structure.  A {\em stickman} is a tree $T$ obtained from two paths $P$ and $Q$ of order at least $3$ by joining some internal vertex of $P$ and some internal vertex of $Q$ by a path $H$ (if $H$ is empty, the vertices are simply joined by an edge).  We call $H$ the {\em interior path} of the stickman; this is the path that remains after we delete the vertices of $P$ and $Q$.

The above examples of bridges and batons demonstrate that trees with high density need not have many vertices of large degree. Indeed it was shown in \cite{j1} that if $T_k$ is a sequence of trees such that $\mathrm{den}(T_k) \rightarrow 1$, then the ratio of the number of vertices of degree 2 of $T_k$ to $|V(T_k)|$ approaches $1$.  This result is a fairly direct corollary of the following result from \cite{j1}, which we apply later on.

\begin{result}\label{LeavesLemma}
If $T$ is a tree of order $n\geq 3$ with $\ell$ leaves, then
\[
M_T<n-\tfrac{\ell}{2}.
\]
\end{result}

The work of Haslegrave \cite{has} provides a similar upper bound on the mean subtree order of $T$ in terms of the number of leaves of $T$ and the number of \textit{twigs} of $T.$  A vertex $v$ of a tree $T$ is called a \textit{twig} if $\deg(v)\geq 2$ and at least $\deg(v)-1$ of its neighbours are leaves.  Note that $T$ is a caterpillar if and only if it has at most two twigs.  The following result was proven implicitly in \cite{has}; specifically it follows from Lemma 3 and the proof of Lemma 4 there.

\begin{result}\label{HasTwigs}
Let $T$ be a tree of order at least $4$ in which every twig has degree at least $3.$  If $T$ has $t$ twigs, $\ell_1$ leaves adjacent to twigs, and $\ell_2$ leaves not adjacent to twigs, then
\[
M_T<n-\tfrac{29}{45}t-\tfrac{17}{45}\ell_1-\tfrac{1}{2}\ell_2\leq n-\tfrac{7}{5}t.
\]
\end{result}

Note that while Lemma 4 of \cite{has} is stated only for \textit{series-reduced} trees (those in which every internal vertex has degree at least $3$), all that is actually required to reach the first inequality of  Result \ref{HasTwigs} is that every \textit{twig} has degree at least $3$.  The second inequality of Result \ref{HasTwigs} follows from the basic facts that $\ell_1\geq 2t$ (since every twig has degree at least $3$) and $\ell_2\geq 0.$

An {\em aster} is a tree with at most one vertex of degree exceeding $2$.  We say that an aster $T$ is {\em astral over $v\in V(T)$} if $T$ is a path or if $v$ has degree greater than $2$ in $T$.  An aster is {\em balanced} if it is a path or if any two of the paths emanating from the vertex of maximum degree differ in order by at most 1.  It was shown in \cite{j1} that if $T$ has order $n$ and is astral over $v$, then the local mean subtree order of $T$ at $v$ is $\tfrac{n+1}{2},$ and that among all asters of a given order, the stars have maximum global mean subtree order.

We give a brief summary of other work done on the mean subtree order problem.
Vince and Wang \cite{vw} showed that if $T$ is a series-reduced tree, then $\frac{1}{2} \le \mathrm{den}(T) < \frac{3}{4}$, and that both bounds are asymptotically sharp.  Moreover, Haslegrave \cite{has} demonstrated necessary and sufficient conditions for a sequence of distinct series-reduced trees to have density approaching either bound.  It was shown in \cite{ww1} that for almost every tree $T$, there is a tree $T'$ of the same order such that $T\not\cong T'$ and $M_T=M_{T'}.$

Before we proceed with our main results, we discuss computational results supporting the Caterpillar Conjecture, which is the subject of the last remaining open problem from~\cite{j1}.  For all $n\leq 24$, we used McKay and Piperno's program \emph{nauty}~\cite{mp} to enumerate all nonisomorphic trees of order $n$, and then implemented the linear time-algorithm of Yan and Yeh~\cite{yy} to compute the subtree polynomial, and in turn the mean subtree order, of all of these trees.  The optimal tree in $\T{n}$ for $n\leq 15$ is indeed the one demonstrated (for $n\leq 10$) or conjectured (for $11\leq n\leq 15$) to be optimal in~\cite{j1} (and illustrated there), and the optimal tree in $\T{n}$ for $16\leq n\leq 24$ is illustrated in Figure~\ref{OptimalTrees}.  Note that the optimal trees are all caterpillars, and that they appear to be rather baton-like.  This motivates our in-depth study of batons (and subdivided double stars in general).

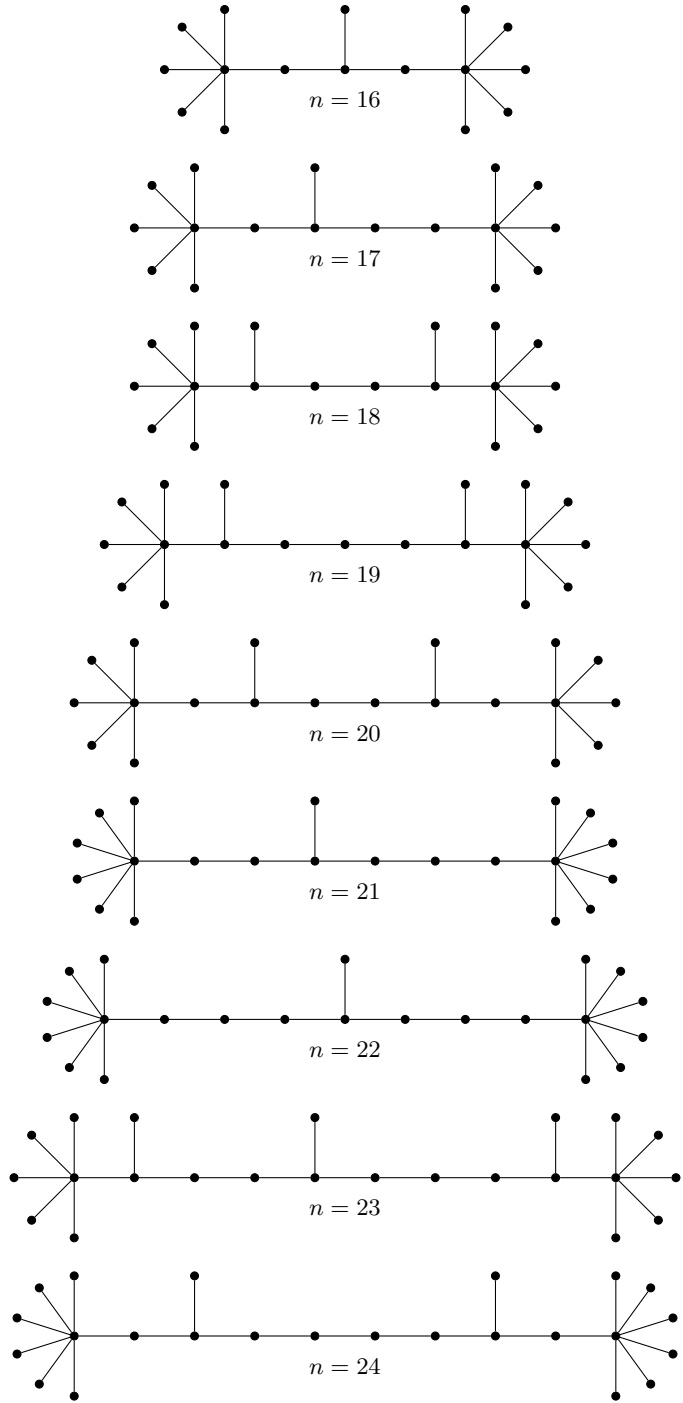
\begin{figure}
\centering{
\begin{tikzpicture}[scale=0.8]
\vertex (1) at (0,0) {};
\vertex (2) at (1,0) {};
\vertex (3) at (2,0) {};
\vertex (4) at (3,0) {};
\vertex (5) at (4,0) {};
\vertex (6) at (0,1) {};
\vertex (7) at ({-sqrt(2)/2},{sqrt(2)/2}) {};
\vertex (8) at (-1,0) {};
\vertex (9) at ({-sqrt(2)/2},{-sqrt(2)/2}) {};
\vertex (10) at (0,-1) {};
\vertex (11) at (2,1) {};
\vertex (12) at (4,1) {};
\vertex (13) at ({4+sqrt(2)/2},{sqrt(2)/2}) {};
\vertex (14) at (5,0) {};
\vertex (15) at ({4+sqrt(2)/2},-{sqrt(2)/2}) {};
\vertex (16) at (4,-1) {};
\path
(1) edge (2)
(2) edge (3)
(3) edge (4)
(4) edge (5)
(1) edge (6)
(1) edge (7)
(1) edge (8)
(1) edge (9)
(1) edge (10)
(3) edge (11)
(5) edge (12)
(5) edge (13)
(5) edge (14)
(5) edge (15)
(5) edge (16);
\draw (2,-0.5) node {\footnotesize $n=16$};
\end{tikzpicture}\\\vspace{10pt}

\begin{tikzpicture}[scale=0.8]
\vertex (1) at (0,0) {};
\vertex (2) at (1,0) {};
\vertex (3) at (2,0) {};
\vertex (4) at (3,0) {};
\vertex (5) at (4,0) {};
\vertex (6) at (0,1) {};
\vertex (7) at ({-sqrt(2)/2},{sqrt(2)/2}) {};
\vertex (8) at (-1,0) {};
\vertex (9) at ({-sqrt(2)/2},{-sqrt(2)/2}) {};
\vertex (10) at (0,-1) {};
\vertex (11) at (2,1) {};
\vertex (12) at (5,1) {};
\vertex (13) at ({5+sqrt(2)/2},{sqrt(2)/2}) {};
\vertex (14) at (6,0) {};
\vertex (15) at ({5+sqrt(2)/2},-{sqrt(2)/2}) {};
\vertex (16) at (5,-1) {};
\vertex (17) at (5,0) {};
\path
(1) edge (2)
(2) edge (3)
(3) edge (4)
(4) edge (5)
(5) edge (17)
(1) edge (6)
(1) edge (7)
(1) edge (8)
(1) edge (9)
(1) edge (10)
(3) edge (11)
(17) edge (12)
(17) edge (13)
(17) edge (14)
(17) edge (15)
(17) edge (16);
\draw (2.5,-0.5) node {\footnotesize $n=17$};
\end{tikzpicture}\\\vspace{10pt}

\begin{tikzpicture}[scale=0.8]
\vertex (1) at (0,0) {};
\vertex (2) at (1,0) {};
\vertex (3) at (2,0) {};
\vertex (4) at (3,0) {};
\vertex (5) at (4,0) {};
\vertex (6) at (0,1) {};
\vertex (7) at ({-sqrt(2)/2},{sqrt(2)/2}) {};
\vertex (8) at (-1,0) {};
\vertex (9) at ({-sqrt(2)/2},{-sqrt(2)/2}) {};
\vertex (10) at (0,-1) {};
\vertex (11) at (1,1) {};
\vertex (12) at (5,1) {};
\vertex (13) at ({5+sqrt(2)/2},{sqrt(2)/2}) {};
\vertex (14) at (6,0) {};
\vertex (15) at ({5+sqrt(2)/2},-{sqrt(2)/2}) {};
\vertex (16) at (5,-1) {};
\vertex (17) at (5,0) {};
\vertex (18) at (4,1) {};
\path
(1) edge (2)
(2) edge (3)
(3) edge (4)
(4) edge (5)
(5) edge (17)
(1) edge (6)
(1) edge (7)
(1) edge (8)
(1) edge (9)
(1) edge (10)
(2) edge (11)
(5) edge (18)
(17) edge (12)
(17) edge (13)
(17) edge (14)
(17) edge (15)
(17) edge (16);
\draw (2.5,-0.5) node {\footnotesize $n=18$};
\end{tikzpicture}\\\vspace{10pt}

\begin{tikzpicture}[scale=0.8]
\vertex (1) at (0,0) {};
\vertex (2) at (1,0) {};
\vertex (3) at (2,0) {};
\vertex (4) at (3,0) {};
\vertex (5) at (4,0) {};
\vertex (6) at (0,1) {};
\vertex (7) at ({-sqrt(2)/2},{sqrt(2)/2}) {};
\vertex (8) at (-1,0) {};
\vertex (9) at ({-sqrt(2)/2},{-sqrt(2)/2}) {};
\vertex (10) at (0,-1) {};
\vertex (11) at (1,1) {};
\vertex (12) at (6,1) {};
\vertex (13) at ({6+sqrt(2)/2},{sqrt(2)/2}) {};
\vertex (14) at (6,0) {};
\vertex (15) at ({6+sqrt(2)/2},-{sqrt(2)/2}) {};
\vertex (16) at (6,-1) {};
\vertex (17) at (5,0) {};
\vertex (18) at (5,1) {};
\vertex (19) at (7,0) {};
\path
(1) edge (2)
(2) edge (3)
(3) edge (4)
(4) edge (5)
(5) edge (17)
(1) edge (6)
(1) edge (7)
(1) edge (8)
(1) edge (9)
(1) edge (10)
(2) edge (11)
(17) edge (18)
(14) edge (12)
(14) edge (13)
(14) edge (19)
(14) edge (15)
(14) edge (16)
(17) edge (14);
\draw (3,-0.5) node {\footnotesize $n=19$};
\end{tikzpicture}\\\vspace{10pt}

\begin{tikzpicture}[scale=0.8]
\vertex (1) at (0,0) {};
\vertex (2) at (1,0) {};
\vertex (3) at (2,0) {};
\vertex (4) at (3,0) {};
\vertex (5) at (4,0) {};
\vertex (6) at (5,0) {};
\vertex (7) at (6,0) {};
\vertex (8) at (7,0) {};
\vertex (9) at (0,1) {};
\vertex (10) at ({-sqrt(2)/2},{sqrt(2)/2}) {};
\vertex (11) at (-1,0) {};
\vertex (12) at ({-sqrt(2)/2},{-sqrt(2)/2}) {};
\vertex (13) at (0,-1) {};
\vertex (14) at (2,1) {};
\vertex (15) at (5,1) {};
\vertex (16) at (7,1) {};
\vertex (17) at ({7+sqrt(2)/2},{sqrt(2)/2}) {};
\vertex (18) at (8,0) {};
\vertex (19) at ({7+sqrt(2)/2},-{sqrt(2)/2}) {};
\vertex (20) at (7,-1) {};
\path
(1) edge (2)
(2) edge (3)
(3) edge (4)
(4) edge (5)
(5) edge (6)
(6) edge (7)
(7) edge (8)
(1) edge (9)
(1) edge (10)
(1) edge (11)
(1) edge (12)
(1) edge (13)
(3) edge (14)
(6) edge (15)
(8) edge (16)
(8) edge (17)
(8) edge (18)
(8) edge (19)
(8) edge (20);
\draw (3.5,-0.5) node {\footnotesize $n=20$};
\end{tikzpicture}\\\vspace{10pt}

\begin{tikzpicture}[scale=0.8]
\vertex (1) at (0,0) {};
\vertex (2) at (1,0) {};
\vertex (3) at (2,0) {};
\vertex (4) at (3,0) {};
\vertex (5) at (4,0) {};
\vertex (6) at (5,0) {};
\vertex (7) at (6,0) {};
\vertex (8) at (7,0) {};
\vertex (9) at (0,1) {};
\vertex (10) at (-0.58,0.8) {};
\vertex (11) at (-0.95,0.3) {};
\vertex (12) at (-0.95,-0.3) {};
\vertex (13) at (-0.58,-0.8) {};
\vertex (14) at (0,-1) {};
\vertex (15) at (3,1) {};
\vertex (16) at (7,1) {};
\vertex (17) at (7.58,0.8) {};
\vertex (18) at (7.95,0.3) {};
\vertex (19) at (7.95,-0.3) {};
\vertex (20) at (7.58,-0.8) {};
\vertex (21) at (7,-1) {};
\path
(1) edge (2)
(2) edge (3)
(3) edge (4)
(4) edge (5)
(5) edge (6)
(6) edge (7)
(7) edge (8)
(1) edge (9)
(1) edge (10)
(1) edge (11)
(1) edge (12)
(1) edge (13)
(1) edge (14)
(4) edge (15)
(8) edge (16)
(8) edge (17)
(8) edge (18)
(8) edge (19)
(8) edge (20)
(8) edge (21);
\draw (3.5,-0.5) node {\footnotesize $n=21$};
\end{tikzpicture}\\\vspace{10pt}

\begin{tikzpicture}[scale=0.8]
\vertex (1) at (0,0) {};
\vertex (2) at (1,0) {};
\vertex (3) at (2,0) {};
\vertex (4) at (3,0) {};
\vertex (5) at (4,0) {};
\vertex (6) at (5,0) {};
\vertex (7) at (6,0) {};
\vertex (8) at (7,0) {};
\vertex (9) at (8,0) {};
\vertex (10) at (0,1) {};
\vertex (11) at (-0.58,0.8) {};
\vertex (12) at (-0.95,0.3) {};
\vertex (13) at (-0.95,-0.3) {};
\vertex (14) at (-0.58,-0.8) {};
\vertex (15) at (0,-1) {};
\vertex (16) at (4,1) {};
\vertex (17) at (8,1) {};
\vertex (18) at (8.58,0.8) {};
\vertex (19) at (8.95,0.3) {};
\vertex (20) at (8.95,-0.3) {};
\vertex (21) at (8.58,-0.8) {};
\vertex (22) at (8,-1) {};
\path
(1) edge (2)
(2) edge (3)
(3) edge (4)
(4) edge (5)
(5) edge (6)
(6) edge (7)
(7) edge (8)
(8) edge (9)
(1) edge (10)
(1) edge (11)
(1) edge (12)
(1) edge (13)
(1) edge (14)
(1) edge (15)
(5) edge (16)
(9) edge (17)
(9) edge (18)
(9) edge (19)
(9) edge (20)
(9) edge (21)
(9) edge (22);
\draw (4,-0.5) node {\footnotesize $n=22$};
\end{tikzpicture}\\\vspace{10pt}

\begin{tikzpicture}[scale=0.8]
\vertex (1) at (0,0) {};
\vertex (2) at (1,0) {};
\vertex (3) at (2,0) {};
\vertex (4) at (3,0) {};
\vertex (5) at (4,0) {};
\vertex (6) at (5,0) {};
\vertex (7) at (6,0) {};
\vertex (8) at (7,0) {};
\vertex (9) at (8,0) {};
\vertex (10) at (9,0) {};
\vertex (11) at (0,1) {};
\vertex (12) at ({-sqrt(2)/2},{sqrt(2)/2}) {};
\vertex (13) at (-1,0) {};
\vertex (14) at ({-sqrt(2)/2},{-sqrt(2)/2}) {};
\vertex (15) at (0,-1) {};
\vertex (16) at (1,1) {};
\vertex (17) at (4,1) {};
\vertex (18) at (8,1) {};
\vertex (19) at (9,1) {};
\vertex (20) at ({9+sqrt(2)/2},{sqrt(2)/2}) {};
\vertex (21) at (10,0) {};
\vertex (22) at ({9+sqrt(2)/2},-{sqrt(2)/2}) {};
\vertex (23) at (9,-1) {};
\path
(1) edge (2)
(2) edge (3)
(3) edge (4)
(4) edge (5)
(5) edge (6)
(6) edge (7)
(7) edge (8)
(8) edge (9)
(9) edge (10)
(1) edge (11)
(1) edge (12)
(1) edge (13)
(1) edge (14)
(1) edge (15)
(2) edge (16)
(5) edge (17)
(9) edge (18)
(10) edge (19)
(10) edge (20)
(10) edge (21)
(10) edge (22)
(10) edge (23);
\draw (4.5,-0.5) node {\footnotesize $n=23$};
\end{tikzpicture}\\\vspace{10pt}

\begin{tikzpicture}[scale=0.8]
\vertex (1) at (0,0) {};
\vertex (2) at (1,0) {};
\vertex (3) at (2,0) {};
\vertex (4) at (3,0) {};
\vertex (5) at (4,0) {};
\vertex (6) at (5,0) {};
\vertex (7) at (6,0) {};
\vertex (8) at (7,0) {};
\vertex (9) at (8,0) {};
\vertex (10) at (9,0) {};
\vertex (11) at (0,1) {};
\vertex (12) at (-0.58,0.8) {};
\vertex (13) at (-0.95,0.3) {};
\vertex (14) at (-0.95,-0.3) {};
\vertex (15) at (-0.58,-0.8) {};
\vertex (16) at (0,-1) {};
\vertex (17) at (2,1) {};
\vertex (18) at (7,1) {};
\vertex (19) at (9,1) {};
\vertex (20) at (9.58,0.8) {};
\vertex (21) at (9.95,0.3) {};
\vertex (22) at (9.95,-0.3) {};
\vertex (23) at (9.58,-0.8) {};
\vertex (24) at (9,-1) {};
\path
(1) edge (2)
(2) edge (3)
(3) edge (4)
(4) edge (5)
(5) edge (6)
(6) edge (7)
(7) edge (8)
(8) edge (9)
(9) edge (10)
(1) edge (11)
(1) edge (12)
(1) edge (13)
(1) edge (14)
(1) edge (15)
(1) edge (16)
(3) edge (17)
(8) edge (18)
(10) edge (19)
(10) edge (20)
(10) edge (21)
(10) edge (22)
(10) edge (23)
(10) edge (24);
\draw (4.5,-0.5) node {\footnotesize $n=24$};
\end{tikzpicture}\\\vspace{10pt}
}
\caption{The optimal tree in $\T{n}$ for $16\leq n\leq 24.$}
\label{OptimalTrees}
\end{figure}

\section{The Gluing Lemma}\label{LemmaSection}

Let $Q$ be any tree with root $v$ and let $u$ be a vertex of a path $P$ of order at least $3$. In this section we show that among all trees obtained from the disjoint union of $P$ and $Q$ by  gluing $v$ to some vertex $u$ of $P$ (i.e.\ by identifying $v$ and $u$), the maximum mean subtree order is obtained if $u$ is a central vertex of $P$. This result is used to describe optimal trees for several families, to prove a necessary condition on any optimal tree among all trees of a fixed order, and to answer an open problem from \cite{j1}.

\begin{lemma}[Gluing Lemma]
Fix a natural number $n \ge 3$ and a tree $Q$ of order at least $2$ having root vertex $v$. Let $P:u_1\dots u_n$ be a path of order $n$. For $s \in \{1, \dots, n\}$, let $T_s$ be the tree obtained from the disjoint union of $P$ and $Q$ by gluing $v$ to $u_s$.  Among all such trees $T_s$, the tree $T_{\left\lfloor \frac{n+1}{2}\right\rfloor}$ is the optimal tree.  In other words, the maximum mean subtree order of trees constructed this way occurs when $v$ is glued to a central vertex of $P$.
\end{lemma}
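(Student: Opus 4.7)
My plan is to write $\Phi_{T_s}(1)$ and $\Phi'_{T_s}(1)$ as closed-form affine functions of the single quantity $\alpha(s) := s(n-s+1) = \Phi_P(u_s;1)$, and then reduce the lemma to one positivity inequality. Every subtree of $T_s$ falls into exactly one of three classes depending on its intersection with $v = u_s$: (i) subtrees of $Q$ that avoid $v$, (ii) subtrees of $P$ that avoid $u_s$, and (iii) subtrees containing $v$, which factor uniquely as $S_P \cup S_Q$ where $S_P$ is a subtree of $P$ containing $u_s$, $S_Q$ is a subtree of $Q$ containing $v$, and $|S_P \cup S_Q| = |S_P|+|S_Q|-1$. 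Combining this decomposition with Result~\ref{paths} and the identities $\Phi_P(1) - \Phi_P(u_s;1) = \binom{n+1}{2} - \alpha(s)$ and $\Phi'_P(1) - \Phi'_P(u_s;1) = \binom{n+2}{3} - \alpha(s)(n+1)/2$, a term-by-term accounting yields
\[
\Phi_{T_s}(1) = K_1 + \alpha(s)\,L_1, \qquad \Phi'_{T_s}(1) = K_2 + \alpha(s)\,L_2,
\]
where $K_1 = \Phi_Q(1) - \Phi_Q(v;1) + \binom{n+1}{2}$, $L_1 = \Phi_Q(v;1) - 1$, $K_2 = \Phi'_Q(1) - \Phi'_Q(v;1) + \binom{n+2}{3}$, and $L_2 = \tfrac{n-1}{2}L_1 + \Phi'_Q(v;1) - 1$ are constants depending only on $n$ and $Q$.

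A direct cross-multiplication now yields
\[
\Phi'_{T_{s+1}}(1)\,\Phi_{T_s}(1) - \Phi'_{T_s}(1)\,\Phi_{T_{s+1}}(1) \;=\; \bigl(\alpha(s+1) - \alpha(s)\bigr)\bigl(K_1 L_2 - K_2 L_1\bigr),
\]
so $M_{T_{s+1}} > M_{T_s}$ exactly when these two factors share a common sign. The factor $\alpha(s+1) - \alpha(s) = n - 2s$ is positive for $s < n/2$, zero only at $s = n/2$ (possible only when $n$ is even), and negative for $s > n/2$; combined with the path reflection $u_i \leftrightarrow u_{n+1-i}$, which induces the isomorphism $T_s \cong T_{n+1-s}$, this reduces the entire lemma to the single inequality $K_1 L_2 - K_2 L_1 > 0$.

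Establishing this inequality uniformly in $n\ge 3$ and in $Q$ is the main obstacle, and the plan is to split the expansion of $K_1L_2 - K_2L_1$ into two blocks, one with coefficient $\Phi_Q(1) - \Phi_Q(v;1)$ and one with coefficient $\binom{n+1}{2}$. Using the identities $\Phi'_Q(1) - \Phi'_Q(v;1) = (\Phi_Q(1) - \Phi_Q(v;1))\,M_{Q-v}$, where $M_{Q-v}$ is the mean subtree order of the forest $Q-v$, and $\Phi'_Q(v;1) - 1 = (\Phi_Q(v;1) - 1)\,M_{Q,v} + (M_{Q,v} - 1)$, the first block simplifies to
\[
(\Phi_Q(v;1) - 1)\!\left[\tfrac{n-1}{2} + M_{Q,v} - M_{Q-v}\right] + (M_{Q,v} - 1),
\]
which is strictly positive: the inequality $M_Q < M_{Q,v}$ from Result~\ref{global_local}, applied to $Q$ and rearranged, gives $M_{Q-v} < M_{Q,v}$ whenever $|V(Q)| \ge 2$, and $M_{Q,v} > 1$ for the same reason. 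The second block simplifies to $(\Phi_Q(v;1) - 1)(n-7)/6 + (\Phi'_Q(v;1) - 1)$, whose first summand is negative for $n \le 6$; here the plan is to absorb the deficit using the elementary bound $\Phi'_Q(v;1) - 1 \ge 2(\Phi_Q(v;1) - 1)$, valid because every subtree of $Q$ containing $v$ other than $\{v\}$ itself has order at least $2$. This collapses the second block to $(\Phi_Q(v;1) - 1)(n+5)/6$, which is strictly positive for $n \ge 3$. Both blocks being strictly positive (and each multiplied by a positive coefficient), $K_1L_2 - K_2L_1 > 0$, and the lemma follows.
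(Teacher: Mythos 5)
Your proof is correct, and it is essentially the paper's approach. Both proofs use the same three-way decomposition of the subtrees of $T_s$, leading to the same closed forms $\Phi_{T_s}(1) = K_1 + \alpha(s)L_1$ and $\Phi'_{T_s}(1) = K_2 + \alpha(s)L_2$ with $\alpha(s) = s(n-s+1)$, and both reduce the lemma to showing the single $s$-independent constant $K_1L_2 - K_2L_1 > 0$ (the paper reaches this same quantity as $f(s)/(n-2s+1)$ by treating $s$ as a real variable and differentiating, whereas you take a discrete first difference; both routes automatically yield the Strong Gluing Lemma). The only genuine divergence is in how positivity of that constant is established. The paper expands it into five terms and verifies each is nonnegative, invoking Result~\ref{global_local} in the form $\Phi_Q(1)\Phi'_Q(v;1) - \Phi'_Q(1)\Phi_Q(v;1) > 0$ together with the combinatorial inequality $\sum_i (i-1)(a_i - b_i) \ge 0$. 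You instead group by the coefficients of $\Phi_Q(1)-\Phi_Q(v;1)$ and $\binom{n+1}{2}$, giving two blocks whose positivity you establish via $M_{Q,v} > M_{Q-v}$ (a weighted-average rearrangement of Result~\ref{global_local}, using that $M_Q$ is a convex combination of $M_{Q,v}$ and $M_{Q-v}$) and the elementary bound $\Phi'_Q(v;1)-1 \ge 2(\Phi_Q(v;1)-1)$. Your grouping is a bit more transparent in that it cleanly separates the $Q$-dependent contribution from the path-dependent contribution, but the crux of both arguments is the same appeal to Result~\ref{global_local}, so this is a cosmetic rather than a structural difference.
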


\begin{proof} We may assume $s \le \frac{n+1}{2}$.  The subtrees of $T_s$ can be partitioned into three types:
\begin{itemize}
\item Those that lie in $P$ but do not contain $u_s$.  These are counted by the polynomial $\Phi_P(x)-\Phi_P(u_s;x)$.
\item Those that lie in $Q$ but do not contain $v$.  These are counted by the polynomial $\Phi_Q(x)-\Phi_Q(v;x)$.
\item Those that contain the glued vertex.  These are counted by the polynomial $\displaystyle\frac{\Phi_P(u_s;x)\Phi_Q(v;x)}{x}$.
\end{itemize}
Thus,
\begin{align}\label{Phi}
\Phi_{T_s}(x)= \Phi_P(x)-\Phi_P(u_s;x)+\Phi_Q(x)-\Phi_Q(v;x) + \frac{\Phi_P(u_s;x)\Phi_Q(v;x)}{x}.
\end{align}
Evaluating the derivative gives
\begin{align}\label{PhiPrime}
\begin{split}
\Phi'_{T_s}(x)&=  \Phi'_P(x)-\Phi'_P(u_s;x)+\Phi'_Q(x) -\Phi'_Q(v;x)\\
& \ \ \ \ + \frac{\Phi'_P(u_s;x)\Phi_Q(v;x)}{x}+\Phi_P(u_s;x)\left[\frac{x\Phi'_Q(v;x)-\Phi_Q(v;x)}{x^2}\right].
\end{split}
\end{align}
Evaluating (\ref{Phi}) and (\ref{PhiPrime}) at $1$ yields
\begin{align}\label{TsAt1}
\Phi_{T_s}(1)&= \Phi_P(1)+\Phi_Q(1)-\Phi_Q(v;1) + \Phi_P(u_s;1)\left[\Phi_Q(v;1)-1\right],
\end{align}
and
\begin{align}\label{TsprimeAt1}
\begin{split}
\Phi'_{T_s}(1)&=  \Phi'_P(1)+\Phi'_Q(1) -\Phi'_Q(v;1)\\
& \ \ \ \ +\Phi'_P(u_s;1)\left[\Phi_Q(v;1)-1\right]+\Phi_P(u_s;1)\left[\Phi'_Q(v;1)-\Phi_Q(v;1)\right],
\end{split}
\end{align}
respectively.

Now let $\Phi_Q(1)=A$, $\Phi'_Q(1)=\overline{A}$, $\Phi_Q(v;1)=B$ and $\Phi'_Q(v;1) = \overline{B}$ (note that these quantities are constant relative to $n$ and $s$).  By Result \ref{paths}, $\Phi_P(1)= \tbinom{n +1}{2},$ $\Phi'_P(1) = \tbinom{n+2}{3},$  $\Phi_P(u_s;1) = s(n-s+1)$, and $\Phi'_P(u_s;1)=s(n-s+1)\tfrac{n+1}{2}.$  Using (\ref{TsAt1}) and (\ref{TsprimeAt1}) and substituting the values given in this paragraph, we obtain
\[
M_{T_s}=\frac{\Phi'_{T_s}(1)}{\Phi_{T_s}(1)}=\frac{\tbinom{n+2}{3}+ \overline{A}-\overline{B} + s(n-s+1)[(B-1)\tfrac{n+1}{2}+ \overline{B} -B]}{\tbinom{n+1}{2} + A-B + s(n-s+1)[B-1]}.
\]

We show that if we view $M_{T_s}$ as a real valued function of $s \in \left[1, \frac{n+1}{2}\right]$, then $M_{T_s}$ is increasing on $\left[1, \frac{n+1}{2}\right]$.  Since the denominator of $M_{T_s}$ is strictly positive on the entire interval $\left[1, \frac{n+1}{2}\right]$, the derivative of $M_{T_s}$ exists and, by the quotient rule, it has the same sign as the function $f$ defined by
\begin{align*}
f(s)&=\tfrac{d}{ds}[\Phi'_{T_s}(1)]\Phi_{T_s}(1)-\tfrac{d}{ds}[\Phi_{T_s}(1)]\Phi'_{T_s}(1).
\end{align*}
Since $\tfrac{d}{ds}(s(n-s+1)) = n-2s+1$ is a factor of $f(s)$, we see that $f(s) = 0$ when $s = \frac{n+1}{2}$.  Moreover, for $s \in \left[1, \frac{n+1}{2}\right)$, we have $(n-2s+1)>0$ so that $f(s)$ has the same sign as
\begin{align*} \tfrac{f(s)}{n-2s+1}&= \left[(B-1) \tfrac{n+1}{2}+\overline{B}-B\right]\left[\tbinom{n+1}{2} + A-B +s(n-s+1)(B-1)\right] \\
&\ \ \ \  - (B-1)\left[\tbinom{n+2}{3} + \overline{A} - \overline{B} + s(n-s+1)\left[(B-1)\tfrac{n+1}{2}+\overline{B}-B\right]\right]\\
&=\left[(B-1)\tfrac{n+1}{2}+\overline{B}-B][\tbinom{n+1}{2}+A-B\right]-(B-1)\left[\tbinom{n+2}{3} + \overline{A}-\overline{B}\right] \\
&=(B-1)\left[\tfrac{n+1}{2}\tbinom{n+1}{2} -\tbinom{n+2}{3}\right] +(\overline{B}-B)\tbinom{n+1}{2}+ (B-1)\tfrac{n+1}{2}(A-B) \\
&\ \ \ \ + (\overline{B}-B)(A-B) - (B-1)(\overline{A}-\overline{B})\\
&=(B-1)\left[\tfrac{n+1}{2}\tbinom{n+1}{2} -\tbinom{n+2}{3}\right] +(\overline{B}-B)\tbinom{n+1}{2}+ (B-1)\tfrac{n-1}{2}(A-B) \\
& \ \ \ \ +(B-1)(A-B)+ (\overline{B}-B)(A-B) - (B-1)(\overline{A}-\overline{B})\\
&=(B-1)\tfrac{n(n+1)(n-1)}{12} + (\overline{B}-B)\tbinom{n+1}{2} +(B-1)\tfrac{n-1}{2}(A-B)  \\
&\ \ \ \ +\left[(\overline{A} - A) -(\overline{B}-B)\right]+ (A\overline{B} -\overline{A}B).
\end{align*}

Note that $\tfrac{f(s)}{n-2s+1}$ does not depend on $s.$  Thus, it suffices to show that each of the terms in the final expression for $\tfrac{f(s)}{n-2s+1}$ shown above is nonnegative (and at least one is strictly positive).  Indeed, using the straightforward inequalities $B>1$, $\overline{B} > B$, $A > B$, and $n \ge 3$, it follows that
\begin{align*}
(B-1)\tfrac{n(n+1)(n-1)}{12}&>0,\\
(\overline{B}-B)\tbinom{n+1}{2}&>0, \mbox{ and}\\
(B-1)\tfrac{n-1}{2}(A-B) &>0.
\end{align*}
Let $k$ denote the order of $Q$ and assume, for $1 \le i \le k$, that $Q$ has $a_ i$ subtrees of order $i$ and $b_i$ subtrees of order $i$ that contain $v$. Then $a_i \ge b_i$ for $1 \le i \le k$ and $\displaystyle\overline{A}=\sum_{i=1}^k ia_i$, $\displaystyle A =\sum_{i=1}^k a_i$, $\displaystyle\overline{B} = \sum_{i=1}^k ib_i$ and $\displaystyle B=\sum_{i=1}^k b_i$.  Thus
\[
(\overline A - A) -(\overline{B}-B)=\sum_{i=1}^k (i-1)(a_i-b_i)\ge 0.
\]
Finally, by Result \ref{global_local},
\[
A\overline{B} -\overline{A}B=AB\left(\frac{\overline{B}}{B}-\frac{\overline{A}}{A}\right) > 0.
\]
We conclude that $f(s)$ is positive on $\left[1, \tfrac{n+1}{2}\right),$ so that $M_{T_s}$ is indeed increasing on $\left[1,\tfrac{n+1}{2}\right]$.  Returning to the discrete setting, we conclude that $M_{T_s}$ is maximized when $s=\left\lfloor \frac{n+1}{2} \right\rfloor$.
\end{proof}

Note that we have actually proven something slightly stronger than the Gluing Lemma.  Since we demonstrated that $M_{T_s}$ is increasing on the entire interval $\left[1,\tfrac{n+1}{2}\right]$, we have actually proven the following result, stated formally below as the Strong Gluing Lemma, since we refer to it later.  Essentially, with notation as in the Gluing Lemma, it says that the mean subtree order increases whenever we glue $v$ to a vertex closer to the centre of the path $P$.

\begin{lemma}[Strong Gluing Lemma]
Fix a natural number $n \ge 3$ and a  tree $Q$ of order at least $2$ having root vertex $v$. Let $P:u_1\dots u_n$ be a path of order $n$. For $s \in \{1, \dots, n\}$, let $T_s$ be the tree obtained from the disjoint union of $P$ and $Q$ by gluing $v$ to $u_s$.  If $1\leq i<j\leq \tfrac{n+1}{2}$, then $M_{T_i}<M_{T_j}.$ \hfill \qed
\end{lemma}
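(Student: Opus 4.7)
The plan is to observe that the Strong Gluing Lemma is an immediate consequence of the analysis already carried out in the proof of the Gluing Lemma, with essentially no additional computation required. Recall that in that argument, we treated $s$ as a continuous variable on the interval $\left[1, \tfrac{n+1}{2}\right]$ and examined the sign of the derivative of $M_{T_s}$ with respect to $s$. By the quotient rule, this derivative has the same sign as the function
\[
f(s)=\tfrac{d}{ds}[\Phi'_{T_s}(1)]\,\Phi_{T_s}(1)-\tfrac{d}{ds}[\Phi_{T_s}(1)]\,\Phi'_{T_s}(1),
\]
and we factored $f(s)=(n-2s+1)\cdot C$, where $C=f(s)/(n-2s+1)$ is a constant independent of $s$. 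The heart of the earlier argument consisted in expressing $C$ as a sum of five terms and verifying that each is nonnegative, with at least three being strictly positive under the hypotheses $B>1$, $\overline{B}>B$, $A>B$, and $n\ge 3$; hence $C>0$.

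Consequently, for every $s\in\left[1,\tfrac{n+1}{2}\right)$ the factor $(n-2s+1)$ is strictly positive, so $f(s)>0$ and $M_{T_s}$ is strictly increasing on all of $\left[1,\tfrac{n+1}{2}\right]$. Restricting to integer values of $s$ yields the claim at once: whenever $1\le i<j\le \tfrac{n+1}{2}$, one has $M_{T_i}<M_{T_j}$.

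There is no real obstacle here; the entire proof amounts to noting that the monotonicity established in the Gluing Lemma is strict throughout the half-open interval $\left[1,\tfrac{n+1}{2}\right)$, not merely near the endpoint $s=\left\lfloor\tfrac{n+1}{2}\right\rfloor$. The only subtlety worth highlighting is that the factorization of $f(s)$ isolates a single sign-changing factor $(n-2s+1)$, so strict positivity of the $s$-independent quantity $C$ translates directly into strict monotonicity on the entire interval, and therefore into the strict inequality between $M_{T_i}$ and $M_{T_j}$ for integer $i<j\le\tfrac{n+1}{2}$.
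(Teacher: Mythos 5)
Your proposal is correct and matches the paper's own treatment exactly: the paper explicitly remarks that the Gluing Lemma's proof already established strict monotonicity of $M_{T_s}$ on all of $\left[1,\tfrac{n+1}{2}\right]$, and states the Strong Gluing Lemma with a \qed precisely because it is a corollary of that computation. Your observation that $f(s)=(n-2s+1)\cdot C$ with $C>0$ independent of $s$, and that positivity of $(n-2s+1)$ on $\left[1,\tfrac{n+1}{2}\right)$ gives strict increase on the whole closed interval, is the same reasoning.
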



The following corollary of the Gluing Lemma can be proven in a straightforward manner.

\begin{corollary}\label{diameter_n-2}
Among all trees of order $n\ge 4$ and diameter $n-2$, the tree obtained by joining a pendant vertex to a central vertex of $P_{n-1}$ is optimal. \hfill \qed
\end{corollary}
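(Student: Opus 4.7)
The plan is to apply the Gluing Lemma directly, after first showing that every tree $T$ of order $n$ with diameter $n-2$ has a very rigid structure. Namely, since $T$ contains a path $P$ of length $n-2$ (hence of order $n-1$), only one vertex $v$ of $T$ lies outside $P$; that vertex must be a leaf adjacent to $P$. I would then note that $v$ cannot be adjacent to either endpoint of $P$, since otherwise $P$ could be extended to a path of length $n-1$, contradicting that the diameter is $n-2$. Hence $T$ is obtained from $P_{n-1}: u_1 u_2 \dots u_{n-1}$ by attaching a pendant vertex $v$ to some internal vertex $u_s$ with $2 \le s \le n-2$.

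Next, I would recognize this construction as a special case of the gluing operation: take $Q$ to be $K_2$ rooted at one of its vertices $v$, take the path $P_{n-1}$, and glue $v$ to $u_s$. The Gluing Lemma (applied with the path of order $n-1$) then tells us that the maximum mean subtree order over all choices $s \in \{1, \dots, n-1\}$ is attained at $s = \left\lfloor \tfrac{n}{2} \right\rfloor$, which is a central vertex of $P_{n-1}$. A small check confirms that $\left\lfloor \tfrac{n}{2} \right\rfloor$ does lie in the admissible range $\{2, \dots, n-2\}$ for every $n \ge 4$, so the global optimum given by the Gluing Lemma is actually realized by a valid diameter-$(n-2)$ tree.

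Combining these two observations finishes the proof: every competitor for optimality is a gluing of $K_2$ to an internal vertex of $P_{n-1}$, and among all such gluings the maximum mean subtree order is attained when $v$ is glued to a central vertex of $P_{n-1}$. There is no substantive obstacle here; the only thing that could go wrong in principle would be if the Gluing Lemma's optimum position fell outside the admissible range $\{2,\dots,n-2\}$ dictated by the diameter constraint, but a trivial arithmetic check rules this out for all $n \ge 4$.
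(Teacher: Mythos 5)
Your proof is correct and is essentially the intended argument: the paper states this as a direct corollary of the Gluing Lemma "proven in a straightforward manner" without supplying details, and the natural details are exactly what you wrote. Your structural analysis (diameter $n-2$ forces $T$ to be $P_{n-1}$ plus a pendant vertex at some internal vertex $u_s$ with $2 \le s \le n-2$), the identification of this as gluing $Q = K_2$ to $u_s$, and the check that the Gluing Lemma's optimizer $\lfloor n/2 \rfloor$ lies in the admissible range $\{2,\dots,n-2\}$ for $n \ge 4$ together give a complete and valid proof.
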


In order to state the most important corollaries of the Gluing Lemma, we require some new terminology.

\begin{definition}\label{core}
Let $T$ be a tree different from a path.
\begin{enumerate}
\item A \textit{limb} of $T$ is a maximal path in $T$ containing a leaf of $T$ and no vertices of degree greater than $2$ in $T.$
\item The tree obtained by deleting all limbs of $T$ is called the {\em core} of $T$ and is denoted by $c(T)$.
\item A limb $L$ of $T$ is \textit{adjacent} to vertex $v$ in $c(T)$ if an endnode of $L$ is adjacent to $v$ in $T.$
\item A tree $T$ is called {\em locally balanced} if for each vertex $v$ in $c(T)$, the limbs adjacent to $v$ differ in order by at most 1.
\item The number of limbs adjacent to $v$ in $c(T)$ is called the {\em limb degree} of $v$, and is denoted by $\deg_\lambda(v).$
\item The total order of the limbs adjacent to $v$ in $c(T)$ is called the {\em limb weight} at $v$, and is denoted by $w_\lambda(v)$.
\item For an ordering $\theta: v_1, v_2, \ldots, v_c$ of the vertices of $H=c(T)$, the sequence
\[
(\deg_\lambda(v_1),\deg_\lambda(v_2),\dots,\deg_\lambda(v_c))
\]
is called the {\em limb degree sequence} of $T$ relative to $\theta$ and the sequence
\[
(w_\lambda(v_1), w_\lambda(v_2),\dots, w_\lambda(v_c))
\]
is called the {\em limb weight sequence} of $T$ relative to $\theta$.
\end{enumerate}
\end{definition}

Note that if $T$ is a caterpillar, then $c(T)$ is a path.  Also if $v$ is a leaf in $c(T)$, then the limb degree of $v$ in $T$ is at least $2$.    Figure \ref{same_core_trees}  shows two trees $T_1$ and $T_2$ with the same core having the same limb degree sequences and the same limb weight sequences relative to a given vertex ordering of the core. The tree $T_1$ is not locally balanced whereas the tree $T_2$ is.  Note that a locally balanced tree with a given core, limb degree sequence, and limb weight sequence is unique up to isomorphism.  The next corollary states that the locally balanced tree is optimal among all trees with the same core, limb weight sequence, and limb degree sequence.

\begin{figure} [h!]
\begin{center} \label{same_core_trees}
\includegraphics[scale=0.65]{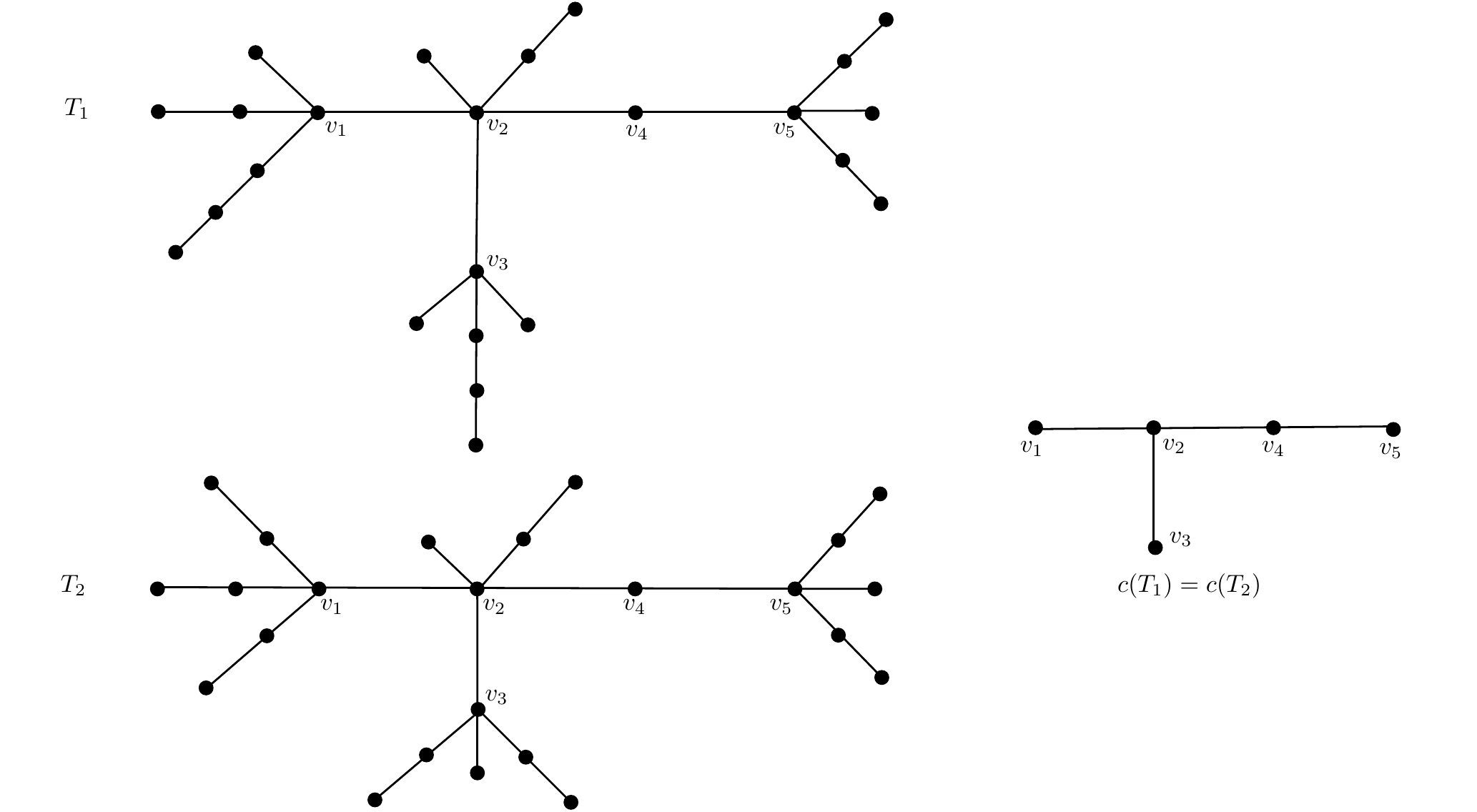}
\caption{Trees with the same core, the same limb degree sequence $(3,2,3,0,3)$, and the same limb weight sequence $(6,3,5,0,5)$, both relative to $\theta\colon\ v_1,v_2,v_3,v_4,v_5$.}
\end{center}
\end{figure}

\begin{theorem}\label{same_core}
Let $H$ be a tree with a vertex ordering $\theta$, and fix a limb weight sequence $\Lambda$ and a limb degree sequence $\Delta$ relative to $\theta$.  Among all trees with core $H$, limb weight sequence $\Lambda$, and limb degree sequence $\Delta$, the locally balanced tree is optimal.
\end{theorem}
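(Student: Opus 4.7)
The plan is to prove Theorem~\ref{same_core} by iterated application of the Strong Gluing Lemma to pairs of limbs attached at a common core vertex. Suppose $T$ has core $H$, limb degree sequence $\Delta$, and limb weight sequence $\Lambda$ (relative to $\theta$) but is not locally balanced; then some $v\in V(H)$ has two limbs $L_1,L_2$ of orders $a,b$ with $a\geq b+2$ attached to it. The one-step goal is to exhibit another tree $T'$ with the same data $(H,\Delta,\Lambda)$ in which the corresponding two limbs have orders $a-1$ and $b+1$ and for which $M_{T'}>M_T$. Because only finitely many trees share the data $(H,\Delta,\Lambda)$ and each step strictly increases the mean subtree order, iterating this step must terminate at the locally balanced tree, which is unique up to isomorphism and therefore optimal.

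For the single step, let $Q$ be the subtree of $T$ obtained by deleting $V(L_1)\cup V(L_2)$, so that $v\in V(Q)$. Take a fresh path $P\colon u_1u_2\dots u_n$ with $n=a+b+1$, disjoint from $Q$, and for each $s\in\{1,\dots,n\}$ let $T_s$ be the tree obtained by identifying $v$ with $u_s$. Then $T_{a+1}\cong T$; in general the two subpaths $u_1\dots u_{s-1}$ and $u_{s+1}\dots u_n$ become the new limbs at $v$ (of orders $s-1$ and $n-s$), while the rest of $T$ is unchanged and $v$ retains its degree, so every $T_s$ has core $H$, limb degree sequence $\Delta$, and limb weight sequence $\Lambda$. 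Before invoking the Strong Gluing Lemma one verifies that $n\geq 3$, which is automatic from $a\geq b+2\geq 3$, and that $|V(Q)|\geq 2$: if $|V(H)|\geq 2$ this is immediate, while if $H=\{v\}$ then $\deg_\lambda(v)\geq 3$ (else $T$ would be a path), so $Q$ contains $v$ together with at least one further limb.

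Since $a\geq b+2$ gives $a\geq (n+1)/2$, the Strong Gluing Lemma combined with the reflectional symmetry $M_{T_s}=M_{T_{n+1-s}}$ (from reversing $P$) yields $M_{T_a}>M_{T_{a+1}}=M_T$, completing the rebalancing step. The main obstacle, beyond the routine identification $T_{a+1}\cong T$, is verifying that the two new subpaths really are limbs of $T_s$ (rather than being absorbed into a longer limb or forming part of a newly created core region); this follows because $v$ has the same degree in every $T_s$, so no degree-$3$ vertex appears or disappears. Iteration across all unbalanced pairs of limbs at all core vertices terminates---since the sum of squares of limb orders strictly decreases at every rebalancing---and the terminal tree is locally balanced, establishing the theorem.
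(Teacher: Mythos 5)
Your proof is correct and takes essentially the same approach as the paper: both form $P$ by joining the two unbalanced limbs at $v$ through a new vertex, take $Q$ to be what remains of $T$, and invoke the (Strong) Gluing Lemma to conclude the unbalanced gluing position is strictly suboptimal while preserving the core, limb degree sequence, and limb weight sequence. The only cosmetic difference is that you iterate one-step rebalances with a decreasing potential rather than assuming optimality and jumping directly to the central gluing for a contradiction; you also helpfully verify the Gluing Lemma's hypotheses $n\geq 3$ and $|V(Q)|\geq 2$, which the paper leaves implicit.
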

\begin{proof}
Let $T$ be optimal among all trees with core $H$, limb weight sequence $\Lambda$, and limb degree sequence $\Delta$, and suppose that $T$ is not locally balanced.  Then there is some vertex $v$ in $c(T)$ such that two limbs adjacent to $v$, say $P_1$ and $P_2$, differ in order by at least $2$.  Assume $n(P_2) \ge n(P_1) +2$ and let $P$ be the path obtained by joining a new vertex $u$ to an end vertex of $P_1$ and an end vertex of $P_2$ and let $Q$ be obtained by deleting the vertices of $P_1 \cup P_2$ from $T$. Then $T$ is obtained by gluing the vertex $u$ of $P$ with the vertex $v$ of $Q$.  By our assumption about $P_1$ and $P_2$ and the Gluing Lemma, the tree $\overline{T}$ obtained from $P$ and $Q$ by gluing $v$ to a central vertex of $P$ has mean subtree order that exceeds that of $T$. This contradicts our choice of $T$ since $\overline{T}$ still has core $H$, limb weight sequence $\Lambda,$ and limb degree sequence $\Delta.$
\end{proof}

Theorem \ref{same_core} applies to several families of particular interest to us.  The following results follow immediately from Theorem \ref{same_core}.

\begin{corollary} \label{asters}
Among all asters of fixed order and with a fixed number of leaves, the balanced aster is optimal.\hfill \qed
\end{corollary}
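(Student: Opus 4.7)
The plan is to see Corollary \ref{asters} as a direct specialization of Theorem \ref{same_core} in which the core reduces to a single vertex. First I would verify this reduction. For an aster $T$ of order $n$ with $\ell \geq 3$ leaves, there is a unique vertex $v$ of degree exceeding $2$, and each limb of $T$ in the sense of Definition \ref{core} is a maximal path from a leaf that stops at a neighbor of $v$ (since $v$ itself has degree greater than $2$ and cannot be included in a limb). Deleting every vertex of every limb therefore leaves exactly the single vertex $v$, so $c(T) = K_1$.

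Fixing the trivial vertex ordering $\theta\colon v$ of this one-vertex core, one has $\deg_\lambda(v) = \ell$ and $w_\lambda(v) = n-1$, since the $\ell$ limbs account for all $n-1$ vertices of $T$ other than $v$. Consequently, every aster of order $n$ with $\ell$ leaves shares the same core $H = K_1$, the same limb degree sequence $\Delta = (\ell)$, and the same limb weight sequence $\Lambda = (n-1)$ relative to $\theta$. Applying Theorem \ref{same_core} then yields that the optimal tree in this family is the locally balanced one, and local balance at the unique core vertex $v$ asserts that any two of the $\ell$ limbs at $v$ differ in order by at most $1$, which is precisely the definition of a balanced aster.

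For the remaining small cases $\ell \in \{0,1,2\}$, there is (up to isomorphism) a unique aster of order $n$ with $\ell$ leaves, namely $K_1$, $K_2$, or the path $P_n$, each of which is trivially balanced, so the corollary holds in these cases as well. I do not anticipate a genuine obstacle: the only substantive step is confirming the dictionary between the terminology of Definition \ref{core} applied to a single-vertex core and the classical definition of a balanced aster, which is immediate from unpacking the definitions.
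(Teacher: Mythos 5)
Your proposal is correct and coincides with the paper's intended approach: the paper states that Corollary~\ref{asters} follows immediately from Theorem~\ref{same_core}, and your argument is precisely the unpacking of that specialization, observing that an aster with at least three leaves has core $K_1$, limb degree sequence $(\ell)$, and limb weight sequence $(n-1)$, so local balance reduces to the classical notion of a balanced aster. The small-case discussion (where the aster is a path and the core of Definition~\ref{core} is not defined) is a sensible, if minor, completion.
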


\begin{corollary} \label{stickmen}
Among all stickmen of a fixed order obtained from paths $P$ and $Q$ by joining an internal vertex of $P$ with an internal vertex of $Q$ by a path of fixed order, the locally balanced stickman is optimal. \hfill \qed
\end{corollary}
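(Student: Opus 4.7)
The plan is to deduce Corollary~\ref{stickmen} as a direct consequence of Theorem~\ref{same_core} by verifying that all stickmen in the prescribed family share a common core, limb degree sequence, and limb weight sequence.

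First, I would describe the core of a stickman. Let $T$ be a stickman obtained from paths $P$ and $Q$ by joining an internal vertex $v_P$ of $P$ and an internal vertex $v_Q$ of $Q$ through an interior path $H$. The leaves of $T$ are precisely the four endpoints of $P$ and $Q$, and from each such leaf the unique maximal path avoiding any vertex of degree exceeding $2$ is one of the four half-paths of $P$ or $Q$ truncated just before the corresponding branch vertex. Hence the limbs of $T$ are exactly these four half-paths, and the core $c(T)$ is the path consisting of $v_P$, the interior path $H$, and $v_Q$. Once the orders of $P$, $Q$, and $H$ are fixed, $c(T)$ is a path of fixed order $n(H)+2$, independent of the choice of $v_P$ and $v_Q$.

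Next, I would record the limb degree and limb weight sequences relative to the natural ordering of $c(T)$ from $v_P$ to $v_Q$. Each branch vertex has two adjacent limbs, while every interior path vertex has none, giving the limb degree sequence $(2,0,\ldots,0,2)$. The two half-paths adjacent to $v_P$ together account for all but one vertex of $P$, contributing total limb weight $n(P)-1$ at $v_P$; similarly, the limb weight at $v_Q$ is $n(Q)-1$ and vanishes elsewhere, yielding the limb weight sequence $(n(P)-1,0,\ldots,0,n(Q)-1)$. Both sequences depend only on the fixed orders of $P$, $Q$, and $H$.

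With these observations in place, Theorem~\ref{same_core} immediately gives that the optimal tree in the family is the locally balanced one. Local balance here amounts to requiring that the two half-paths of $P$ differ in order by at most $1$ (and similarly for $Q$), equivalently that $v_P$ is chosen as a central vertex of $P$ and $v_Q$ as a central vertex of $Q$. There is no significant obstacle: the work lies entirely in checking that the invariants required by Theorem~\ref{same_core} are indeed constant across the family, and all the analytic content has already been absorbed into the Gluing Lemma.
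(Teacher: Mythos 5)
Your proof is correct and takes the same route the paper intends: the paper states the corollary with an empty proof, treating it as an immediate consequence of Theorem~\ref{same_core}, and your writeup supplies exactly the bookkeeping that justifies that --- identifying the core of a stickman as the path on $v_P$, $H$, $v_Q$, and verifying that the limb degree sequence $(2,0,\ldots,0,2)$ and limb weight sequence $(n(P)-1,0,\ldots,0,n(Q)-1)$ are invariant once $P$, $Q$, and $H$ are fixed. Your closing observation that local balance amounts to choosing $v_P$ and $v_Q$ as central vertices of $P$ and $Q$ is also the right unpacking of what the conclusion says for this family.
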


We have seen that the optimal tree among all trees with the same core, limb weight sequence, and limb degree sequence is the locally balanced tree.  The next corollary describes the optimal tree when we remove the restriction on the limb degree sequence.

\begin{theorem}\label{same_core_and_limb_weight}
Let  $H$ be a tree with vertex ordering $\theta$ and fix a limb weight sequence $\Lambda$ relative to $\theta$.  Among all trees with core $H$ and limb weight sequence $\Lambda$, the optimal tree is precisely the one whose limbs all have order $1.$
\end{theorem}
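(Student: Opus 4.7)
The plan is to prove the theorem by contradiction using the Strong Gluing Lemma. The proposed optimal tree $T^*$ is the unique (up to isomorphism) tree with core $H$ that has $w_\lambda(v)$ pendant leaves attached to each $v\in V(H)$. I would first observe that $T^*$ has core $H$ and limb weight sequence $\Lambda$, using the fact that any core vertex adjacent to a limb must have degree at least $3$ in the whole tree, which forces $w_\lambda(v)\ge 2$ at every leaf of $H$ and makes the construction well-defined. The goal is then to show that any competing tree in the family has strictly smaller mean subtree order.

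So suppose $T$ has core $H$ and limb weight sequence $\Lambda$ but $T\not\cong T^*$. Then $T$ contains a limb $L\colon x_1 x_2\cdots x_\ell$ of order $\ell\ge 2$ adjacent to some $v\in V(H)$. Let $Q=T-\{x_1,\dots,x_\ell\}$; since $\deg_T(v)\ge 3$, we have $\deg_Q(v)\ge 2$ and so $Q$ has order at least $3$. Let $P\colon u_1 u_2\cdots u_{\ell+1}$ be a path of order $\ell+1\ge 3$ disjoint from $Q$. Then $T$ is exactly the tree obtained from $Q$ and $P$ by identifying $v$ with $u_1$. Applying the Strong Gluing Lemma with $i=1$ and $j=2$ (valid since $2\le (n+1)/2=(\ell+2)/2$) produces the tree $T'$ obtained instead by identifying $v$ with $u_2$, and $M_{T'}>M_T$.

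The concluding step is to verify that $T'$ still has core $H$ and limb weight sequence $\Lambda$, which would contradict the optimality of $T$. In $T'$ the vertex $v$ gains two new neighbours, the pendant $u_1$ and the path-start $u_3$, so $\deg_{T'}(v)=\deg_T(v)+1\ge 4$, and $v$ clearly remains in the core; the two new branches at $v$ form limbs of orders $1$ and $\ell-1$, together contributing limb weight $\ell$ at $v$, exactly as $L$ did. Every other vertex of $T$ retains its neighbourhood, so the rest of the core and all other limbs are unchanged. The main point requiring care is precisely this verification, namely making sure that the two new branches at $v$ really are limbs (i.e.\ maximal degree-$\le 2$ paths terminating at leaves) and that no degree-$2$ core vertex of $T$ gets reclassified in $T'$; both follow from the invariance of the tree outside $v$'s neighbourhood together with the strict increase of $v$'s degree. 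With this in hand, the existence of $T'$ contradicts the choice of $T$, and we conclude that $T^*$ is the unique optimal tree.
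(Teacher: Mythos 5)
Your proof is correct and follows essentially the same route as the paper: both proofs take a limb $L$ of order $\ell\ge 2$ at $v$, form a path of order $\ell+1$, observe that $T$ is obtained by gluing $v$ to an end of that path, and apply the Strong Gluing Lemma to shift the gluing point one step inward, yielding a tree $T'$ with the same core and limb weight sequence but larger mean subtree order. The only difference is cosmetic labelling of the auxiliary path, and you spell out the verification that $T'$ has core $H$ and limb weight sequence $\Lambda$ more carefully than the paper, which merely asserts it.
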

\begin{proof} Let $T$ be an optimal tree among all trees with core $H$ and limb weight sequence $\Lambda$.  Suppose that some limb $L$ adjacent to vertex $v$ of $H$ has order at least $2$.  Let $L'$ be obtained from $L$ by joining a new vertex $w$ to an endnode $u$ of $L$.  Let $T'$ be obtained by deleting $L$ from $T$ and then gluing $v$ to the vertex $u$ in $L'$. Then $T'$ has core $H$ and limb weight sequence $\Lambda,$ and by the Strong Gluing Lemma, the mean subtree order of $T'$ exceeds that of $T$, a contradiction.
\end{proof}

The optimal tree among all trees with the same core and limb weight sequence as the trees $T_1$ and $T_2$ of Figure \ref{same_core_trees} is shown in Figure \ref {optimal}.

\begin{figure} [h!]
\begin{center}
\includegraphics[scale=0.65]{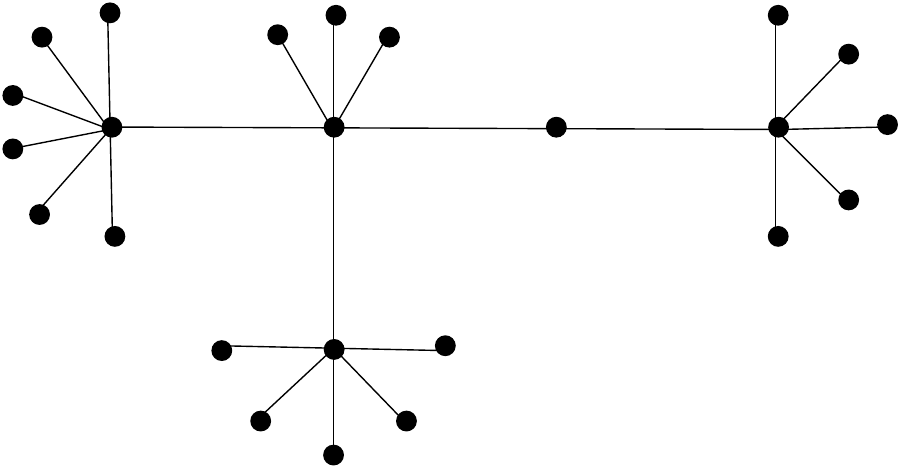}
\caption{The optimal tree for a given core and limb weight sequence.}\label{optimal}
\end{center}
\end{figure}

It follows that between the stickman and the subdivided double star with the same interior path and the same limb weight at each end of the path, the subdivided double star has higher mean subtree order.  This fact was alluded to in \cite{j1} but was not proven there.  The following necessary condition on any optimal tree in $\T{n}$ also follows immediately from Theorem~\ref{same_core_and_limb_weight}.

\begin{theorem}\label{ShortLimbs}
If $T$ is optimal in $\T{n}$ for some $n\geq 4,$ then the limbs of $T$ all have order $1$, i.e.\ every leaf of $T$ is adjacent to a vertex of degree at least $3$ in $T.$ \hfill \qed
\end{theorem}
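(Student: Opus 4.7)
The plan is to deduce the theorem directly from Theorem~\ref{same_core_and_limb_weight}, after first disposing of the degenerate case $T=P_n$ so that the notion of ``limb'' is defined. For $n\ge 4$ the path $P_n$ is not optimal in $\T{n}$: by Jamison's theorem $M_{P_n}=\tfrac{n+2}{3}$ is the minimum mean subtree order in $\T{n}$, while $|\T{n}|\ge 2$ and a direct computation shows, e.g., $M_{K_{1,n-1}}>M_{P_n}$. Hence any optimal $T\in\T{n}$ is not a path and therefore admits a well-defined core $c(T)$. I would fix an arbitrary ordering $\theta$ of $V(c(T))$ and let $\Lambda$ denote the limb weight sequence of $T$ relative to $\theta$.

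I would then argue by contradiction: suppose some limb of $T$ has order at least $2$. Let $\mathcal{F}$ be the family of all trees whose core is $c(T)$ and whose limb weight sequence relative to $\theta$ equals $\Lambda$. Every member of $\mathcal{F}$ has order
\[
|V(c(T))|+\sum_{v\in V(c(T))}w_\lambda(v)=n,
\]
so $\mathcal{F}\subseteq \T{n}$, and in particular $T\in\mathcal{F}$. By Theorem~\ref{same_core_and_limb_weight}, the unique optimal tree $T^*$ in $\mathcal{F}$ is the one whose limbs all have order $1$, so $T\ne T^*$ and hence $M_T<M_{T^*}$. Since $T^*\in\T{n}$, this contradicts the optimality of $T$ in $\T{n}$. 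Therefore every limb of $T$ must have order exactly $1$.

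Finally I would translate ``every limb has order $1$'' into the stated condition on leaves. Given a leaf $\ell$ of $T$ with unique neighbour $v$, if $\deg_T(v)\le 2$ then the maximal path in $T$ starting at $\ell$, passing through $v$, and continuing along degree-$2$ vertices until the first vertex of degree at least $3$ (or another leaf) is reached would be a limb containing both $\ell$ and $v$, of order at least $2$, contradicting the previous paragraph. Hence $\deg_T(v)\ge 3$, as required. There is no substantive obstacle in this argument: the proof is essentially a one-line corollary of Theorem~\ref{same_core_and_limb_weight}, with the only care needed being the order-preservation observation that specifying both the core and the limb weight sequence of a tree already pins down its total order, so the replacement $T^*$ is guaranteed to lie in $\T{n}$.
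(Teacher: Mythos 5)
Your argument is correct and follows the paper's own route: the paper states that Theorem~\ref{ShortLimbs} ``follows immediately from Theorem~\ref{same_core_and_limb_weight},'' and your proof simply spells out that deduction (with the additional, harmless, observation that $P_n$ is not optimal for $n\geq 4$ so the core is well defined). The translation at the end from ``all limbs have order $1$'' to ``every leaf is adjacent to a vertex of degree at least $3$'' is also handled correctly.
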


Finally, we show that the Gluing Lemma gives us a positive answer to open problem (7.6) from \cite{j1}. Let $T$ be a tree that is not a path. Let $w$ be a leaf of $T$ and let $v$ be the vertex of degree at least $3$ in $T$ that is closest to $w$ (i.e.\ $v$ is adjacent to the limb $L_w$ of $T$ containing $w$).  Let $u$ be a neighbour of $v$ that is not on the shortest $w$--$v$ path (i.e.\ not on the limb $L_w$).  Then the tree $T'$ obtained from $T$ by deleting $vu$ and adding $uw$ is called a {\em standard} $1$-{\em associate} of $T$. Jamison~\cite{j1} showed that if $T$ is a tree of order $n$ with mean subtree order less than $(n+1)/2$, then there is a standard $1$-associate of $T$ whose mean subtree order is less than that of $T$.  Moreover, he conjectured that every tree not isomorphic to a path has a standard $1$-associate of lower mean subtree order.  We prove this statement below.

\begin{theorem}\label{associate}
If $T$ is a tree that is not a path, then $T$ has a standard $1$-associate $T'$ such that $M_T>M_{T'}$.
\end{theorem}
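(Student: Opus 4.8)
The plan is to choose the leaf $w$ (together with the vertices $v$ and $u$ that define the standard $1$-associate) cleverly, so that the piece being relocated is itself a limb; this lets me realize $T$ and $T'$ as two members of a single ``glue a fixed rooted tree onto a path'' family, and then conclude with the Strong Gluing Lemma.

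First I would fix the choice of $w,v,u$. Since $T$ is not a path, its core $c(T)$ is a nonempty tree; let $v$ be a leaf of $c(T)$ if $|V(c(T))|\geq 2$, and the unique vertex of $c(T)$ if $c(T)=K_1$. In either case $\deg_\lambda(v)\geq 2$, so $v$ is adjacent in $T$ to at least two limbs, and $\deg_T(v)\geq 3$. Choose two limbs adjacent to $v$, say $L$ of order $\ell$ and $L'$ of order $m$, labelled so that $\ell\leq m$. Let $w$ be the leaf endpoint of $L$, let $u$ be the neighbour of $v$ lying on $L'$, and let $T'$ be obtained from $T$ by deleting $vu$ and adding $uw$. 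This really is a standard $1$-associate: every vertex of $L$ has degree at most $2$ in $T$, so $v$ is the vertex of degree at least $3$ closest to $w$, and $u$ lies on $L'$, hence not on the $w$--$v$ path. Finally set $Q_0:=T-V(L)-V(L')$; this is a tree containing $v$, and since $\deg_T(v)\geq 3$ it has order at least $2$. Root $Q_0$ at $v$.

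The key structural observation, and the step to get exactly right, is that $T$ and $T'$ are both obtained by gluing $Q_0$ (rooted at $v$) onto a path $P$ on $\ell+m+1$ vertices, differing only in which vertex of $P$ the gluing is at. In $T$, two pendant paths of orders $\ell$ and $m$ hang from $v$ with $Q_0$ remaining, so $T$ is obtained by gluing $v$ to the vertex of $P$ at distance $\ell$ from one end. In $T'$, the limb $L'$ is reattached at $w$; since $w$ then has degree $2$, the path $L$ followed by $L'$ becomes a single pendant path of order $\ell+m$ hanging from $v$, and again $Q_0$ remains, so $T'$ is obtained by gluing $v$ to an \emph{endpoint} of $P$. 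Writing $P\colon u_1\dots u_{\ell+m+1}$ in the notation of the Strong Gluing Lemma, we have $T'=T_1$ and $T=T_{\ell+1}$.

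It then remains only to observe that the interior gluing point for $T$ is strictly closer to the centre of $P$ than the endpoint used for $T'$: since $1\leq\ell\leq m$, we have $1<\ell+1\leq\tfrac{(\ell+m+1)+1}{2}$, so the Strong Gluing Lemma (with $i=1$, $j=\ell+1$) gives $M_{T'}=M_{T_1}<M_{T_{\ell+1}}=M_T$, as required. I do not expect a serious obstacle here: once the merging observation places both trees inside a single gluing family, the Strong Gluing Lemma does all the work, and the only point requiring care is orienting the two limbs so that $\ell\leq m$, which guarantees that the insertion point for $T$ lies strictly on the ``$v$-side'' of the centre of $P$.
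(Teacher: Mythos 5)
Your proposal is correct and follows essentially the same route as the paper: choose a leaf of the core with two adjacent limbs, view $T$ and $T'$ as gluings of the remaining rooted tree onto the path formed by the two limbs plus the attachment vertex, and apply the Strong Gluing Lemma (interior position versus endpoint). Your extra care with the labelling $\ell\leq m$ and the case $c(T)=K_1$ only makes explicit what the paper leaves implicit.
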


\begin{proof}
Recall that any leaf of the core of $T$ has limb degree at least $2.$  Let $v$ be a leaf in $c(T)$, let $L_1$ and $L_2$ be two limbs adjacent to $v$, and let $Q$ be the tree obtained from $T$ by deleting $L_1$ and $L_2.$  Let $L$ be the path obtained from the disjoint union of $L_1$ and $L_2$ by adding a new vertex joined to a leaf of $L_1$ and a leaf of $L_2$.  Let $T'$ be the tree obtained from the disjoint union of $L$ and $Q$ by gluing the vertex $v$ of $Q$ to a leaf of $L$. Then $T'$ is a standard $1$-associate of $T$ since it can be obtained by deleting the edge between $v$ and its neighbour $w$ in $L_2$ and adding an edge between $w$ and the leaf of $T$ that is in $L_1$.   By the Strong Gluing Lemma, $T'$ has smaller mean subtree order than $T$.
\end{proof}

It follows immediately from Theorem~\ref{associate} that the path has minimum mean subtree order in $\T{n}.$  Thus, the Gluing Lemma leads to a conceptually simple alternate proof of the main result of~\cite{j1}, stated formally below.

\begin{theorem}\label{PathMinimum}
If $T$ is a tree of order $n$, then $M_T\geq \tfrac{n+2}{3}$ with equality if and only if $T$ is a path.  \hfill \qed
\end{theorem}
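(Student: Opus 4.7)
The plan is to obtain Theorem \ref{PathMinimum} as a direct consequence of Theorem \ref{associate}. First, from Result \ref{paths} I would read off
\[
M_{P_n}=\frac{\Phi'_{P_n}(1)}{\Phi_{P_n}(1)}=\frac{\binom{n+2}{3}}{\binom{n+1}{2}}=\frac{n+2}{3},
\]
so the path achieves the claimed value and equality holds whenever $T$ is a path. It remains to verify strict inequality $M_T>\frac{n+2}{3}$ whenever $T$ is not a path.

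For this, I would iterate Theorem \ref{associate}. Starting with $T_0=T$, for each $i\geq 0$ at which $T_i$ is not a path, choose $T_{i+1}$ to be a standard $1$-associate of $T_i$ satisfying $M_{T_{i+1}}<M_{T_i}$, which exists by Theorem \ref{associate}. Since a standard $1$-associate is defined via an edge swap, each $T_i$ is again a tree of order $n$, so the sequence is drawn from the finite family $\T{n}$. A strictly decreasing sequence of real numbers coming from a finite set must terminate, and termination can occur only when some $T_k$ is a path, because Theorem \ref{associate} furnishes a strictly smaller standard $1$-associate for every non-path. Hence
\[
M_T=M_{T_0}>M_{T_1}>\cdots>M_{T_k}=M_{P_n}=\frac{n+2}{3},
\]
as required.

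There is essentially no substantive obstacle here: the entire argument leans on the already-established Theorem \ref{associate}, and the only delicate point is confirming that the iteration terminates at a path rather than at some other tree. This is handled by the finiteness of $\T{n}$ together with the strict monotonicity of the sequence $(M_{T_i})$, so the work is really just in bookkeeping the iteration.
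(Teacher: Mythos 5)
Your proof is correct and takes essentially the same route as the paper: the paper simply states that Theorem~\ref{PathMinimum} ``follows immediately from Theorem~\ref{associate},'' and your argument just spells out the iteration and termination bookkeeping that this implication hides, along with the computation of $M_{P_n}$ from Result~\ref{paths}.
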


We conclude that the Gluing Lemma is a tool of great strength and utility.  Though the proof we give is long and involved, we have shown that is has numerous significant (and surprising) implications for the mean subtree order problem.

\section{Optimal subdivided double stars}\label{DoubleStarSection}

Motivated by Jamison's observation that batons can have density arbitrarily close to $1$ and the fact that the optimal tree in $\T{n}$ is rather baton-like for all $9\leq n\leq 24$, we undertake an in-depth study of the mean subtree order of general subdivided double stars in this section.  We first demonstrate that among all subdivided double stars of a fixed order and a fixed even number of leaves, the baton (the \textit{balanced} subdivided double star) is optimal, as long as the number of leaves is sufficiently large.  Then we determine the asymptotic growth of the number of leaves in an optimal baton of a fixed order.

Before we begin with our results, we discuss the subtrees of subdivided double stars in general.  Consider $D_n(s,2m-s),$ the subdivided double star on $n\geq 2m+2$ vertices with $2m$ leaves in total; $s$ at one end and $2m-s$ at the other.  Note that the interior path of $D_n(s,2m-s)$ has order $n-2m-2.$  The subtrees of $D_n(s,2m-s)$ can be partitioned into three groups:
\begin{itemize}
\item Those that do not contain the centre vertex of either star.  There are $2m+\binom{n-2m-1}{2}$ such subtrees and the sum of their orders is $2m+\binom{n-2m}{3}.$
\item Those that contain the centre vertices of both stars.  There are $2^{2m}$ such subtrees and the sum of their orders is $(n-m)\cdot 2^{2m}.$
\item Those that contain the centre of exactly one of the stars.  There are
\[
(n-2m-1)\cdot (2^s+2^{2m-s})
\]
such subtrees and the sum of their orders is
\[
\tfrac{1}{2}(n-2m-1)\left[(n-2m+s)\cdot 2^s+(n-s)\cdot 2^{2m-s}\right]
\]
\end{itemize}

In the next theorem, we consider the family of subdivided double stars with a fixed number of vertices and a fixed \emph{even} number of leaves.  We do this simply because every baton has an even number of leaves, and they are our primary interest in this section.  Based on computational evidence, we suspect that a similar result holds for the family of subdivided double stars on a fixed number of vertices and a fixed \emph{odd} (and sufficiently large) number of leaves; that is, we suspect that the ``nearly balanced'' subdivided double star is optimal in this family.

\begin{theorem}\label{BalancedDoubleStars}
Let $m,n\in\mathbb{N}$ with $2m\leq n-2.$  Among all subdivided double stars on $n$ vertices with $2m$ leaves, the balanced subdivided double star (i.e.\ the baton) $D_n(m,m)$ is optimal whenever $m\geq \log_2(n).$
\end{theorem}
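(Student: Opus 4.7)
My plan is to prove this by direct computation. Let $\alpha(s) := \Phi_{D_n(s,2m-s)}(1)$ and $\beta(s) := \Phi'_{D_n(s,2m-s)}(1)$, so that $M_{D_n(s,2m-s)} = \beta(s)/\alpha(s)$. Using the subtree-type breakdown given in the paragraphs preceding the theorem, I would write
\begin{align*}
\alpha(s) &= 2m + \tbinom{n-2m-1}{2} + 2^{2m} + (n-2m-1)(2^s + 2^{2m-s}), \\
\beta(s) &= 2m + \tbinom{n-2m}{3} + (n-m)2^{2m} + \tfrac{1}{2}(n-2m-1)\bigl[2^s(n-2m+s) + 2^{2m-s}(n-s)\bigr].
\end{align*}
Both $\alpha(s)$ and $\beta(s)$ are manifestly invariant under $s \mapsto 2m - s$, so $M_{D_n(s,2m-s)}$ is symmetric about $s = m$. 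It therefore suffices to establish $\beta(m)\alpha(s) - \beta(s)\alpha(m) > 0$ for $1 \leq s \leq m-1$.

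Setting $k := m - s \geq 1$ and $a := n - m$ (so $a \geq m+2$ by the standing hypothesis $2m \leq n-2$), I would expand $\beta(m)\alpha(s) - \beta(s)\alpha(m)$ using the identities $(2^s + 2^{2m-s}) - 2 \cdot 2^m = 2^m(2^k - 1)^2/2^k$ and $(2^k - 1)(2^k + 1) = 2^{2k} - 1$ to factor out positive common factors. After cancelling the common factor $(n-2m-1)(2^k - 1) \cdot 2^m/2^k$, the inequality reduces to
\[
S_{12}(2^k - 1) \;>\; \tfrac{1}{2} N_{12}\bigl[a(2^k - 1) + k(2^k + 1)\bigr] + (n-2m-1)k(2^k+1) \cdot 2^m,
\]
where $N_{12} := 2m + \binom{n-2m-1}{2} + 2^{2m}$ and $S_{12} := 2m + \binom{n-2m}{3} + a \cdot 2^{2m}$ collect the contributions to $\alpha(s)$ and $\beta(s)$ that are independent of $s$. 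Matching coefficients of $2^{2m}$, the $2^{2m}$ contribution to $(\mathrm{LHS}) - (\mathrm{RHS})$ is $\tfrac{1}{2}\bigl[(a-k)2^k - (a+k)\bigr] \cdot 2^{2m}$. Rewriting $(a-k)2^k - (a+k) = (a-k)(2^k - 1) - 2k$ and using $a - k \geq 3$ (from $a \geq m+2$ and $k \leq m-1$), this is at least $3(2^k - 1) - 2k \geq 1$ for every $k \geq 1$, so the leading $2^{2m}$ term is strictly positive.

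The hard part will be to verify that this positive $2^{2m}$ contribution dominates the remaining lower-order contributions on the right-hand side: the polynomial term $\tfrac{1}{2}\bigl(2m + \binom{n-2m-1}{2}\bigr)\bigl[a(2^k-1) + k(2^k+1)\bigr]$, which in the worst case is of order $O(n^3 \cdot 2^k)$, and the $2^m$-error $(n-2m-1)k(2^k+1) \cdot 2^m$, of order $O(n \cdot k \cdot 2^{m+k})$. This is exactly where the hypothesis $m \geq \log_2 n$ enters: since $2^m \geq n$ one has $2^{2m} \geq n^2$ and $n^3 \leq n \cdot 2^{2m}$, which suffices for the leading $\Theta(a \cdot 2^{2m+k})$ contribution to absorb the polynomial part, and $2^m \geq n$ suffices to absorb the $2^m$-error. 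I expect the most delicate regime to be when $n$ is close to $2^m$ (so the polynomial contributions are closest in order of magnitude to the leading $2^{2m}$ term) combined with $k$ small (where the leading coefficient $(a-k)2^k - (a+k)$ is smallest). I would handle this by carefully pairing the positive term $C_1(2^k-1)$ on the left (with $C_1 := 2m + \binom{n-2m}{3}$) against the $\tfrac{a}{2}C_2(2^k-1)$ term on the right (with $C_2 := 2m + \binom{n-2m-1}{2}$), and exploiting both $a = n-m \geq m+2$ and $2^m \geq n$ to close the remaining gap uniformly in $s$.
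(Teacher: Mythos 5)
Your setup matches the paper's closely: the same decomposition of subtrees into $s$-independent pieces ($N_{12},S_{12}$, the paper's $A_{n,m},\overline{A}_{n,m}$) and $s$-dependent pieces, the same cross-ratio difference $\beta(m)\alpha(s)-\beta(s)\alpha(m)$, and the same common factor extracted (your $(n-2m-1)(2^k-1)2^m/2^k$ together with the residual $(2^k-1)$ you keep on both sides of the displayed inequality equals the paper's $\tfrac{1}{2^s}(n-2m-1)(2^m-2^s)^2$). Your change of variables $k=m-s$, $a=n-m$ is a tidy way to express the same thing, and your identification of the $2^{2m}$-coefficient, $\tfrac{1}{2}\bigl[(a-k)2^k-(a+k)\bigr]=\tfrac{1}{2}\bigl[(a-k)(2^k-1)-2k\bigr]\ge\tfrac{1}{2}\bigl[3(2^k-1)-2k\bigr]\ge\tfrac{1}{2}$, is correct and corresponds exactly to the paper's bound $\tfrac{k}{2^k-1}\le 1$ combined with $n\ge 2m+2$ and $s\ge 1$.

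The gap is that what you label ``the hard part'' is, in fact, the entire substantive content of the proof, and you leave it as a plan rather than carrying it out. The paper devotes all of Appendix A to this step. Two specific things your plan glosses over that the paper has to confront: (i) the argument bifurcates on whether the interior path is empty ($n=2m+2$) or not ($n>2m+2$), because the bound $2m+\binom{n-2m-1}{2}\le\tfrac{(n-1)(n-2m-1)}{2}$ used to absorb the polynomial part fails when $n=2m+2$; and (ii) the hypothesis $m\ge\log_2 n$ alone is not quite enough at the margins---the paper also needs $2^m>4(m+1)$, which holds only for $m\ge 5$, and so verifies $m\le 4$ by computer. Your plan invokes only $2^{2m}\ge n^2$ and $2^m\ge n$, and your proposed pairing $C_1(2^k-1)$ against $\tfrac{a}{2}C_2(2^k-1)$ actually produces a \emph{deficit} (one can check $C_1-\tfrac{a}{2}C_2<0$ whenever the interior path is nonempty), which then has to be charged to the $2^{2m}$ leading term along with the $2^m$-error---precisely the estimate you have not done. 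So the framework is right and the leading-order bookkeeping is correct, but the proposal does not yet constitute a proof of the theorem; it stops where the real work begins.
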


\begin{proof}
Let $n$ and $m$ be as in the theorem statement.  We wish to show that $M_{D_n(m,m)}-M_{D_n(s,2m-s)}>0$ for $1\leq s\leq m-1.$  It suffices to show that the difference
\begin{align}\label{h0}
h_{n,m}(s)=\Phi'_{D_n(m,m)}(1)\cdot \Phi_{D_n(s,2m-s)}(1)-\Phi'_{D_n(s,2m-s)}(1)\cdot \Phi_{D_n(m,m)}(1)
\end{align}
is positive for $1\leq s\leq m-1.$  Using a computer algebra system, we have verified this statement for all possible cases with $m\leq 4$ (note that we only need to check up to $n=16$ since $n>16$ implies $\log_2(n)>4$), so we may assume that $m\geq 5.$

We note that the number and total order of those subtrees of $D_n(s,2m-s)$ of the first two types listed in the discussion preceding the theorem statement do not depend on $s$.  We let $A_{n,m}$ and $\overline{A}_{n,m}$ denote the number and total order of these subtrees, respectively; that is
\begin{equation*}
\begin{split}
A_{n,m}&=2m+\tbinom{n-2m-1}{2}+2^{2m}, \mbox{ and}\\
\overline{A}_{n,m}&=2m+\tbinom{n-2m}{3}+(n-m)\cdot 2^{2m}.
\end{split}
\end{equation*}
The number and total order of the remaining subtrees (those that contain the centre vertex of exactly one star) do depend on $s.$  We let $B_{n,m}(s)$ and $\overline{B}_{n,m}(s)$ denote the number and total order of these subtrees, respectively; that is
\begin{equation*}
\begin{split}
B_{n,m}(s)&=(n-2m-1)\cdot (2^s+2^{2m-s}), \mbox{ and}\\
\overline{B}_{n,m}(s)&=\tfrac{1}{2}(n-2m-1)\left[(n-2m+s)\cdot 2^s+(n-s)\cdot 2^{2m-s}\right].
\end{split}
\end{equation*}
With this notation, we have
\begin{equation}\label{AplusB}
\begin{split}
\Phi_{D_n(s,2m-s)}(1)&=A_{n,m}+B_{n,m}(s), \mbox{ and}\\
\Phi'_{D_n(s,2m-s)}(1)&=\overline{A}_{n,m}+\overline{B}_{n,m}(s).
\end{split}
\end{equation}

Substituting the expressions of (\ref{AplusB}) into (\ref{h0}) and then expanding and regrouping, we obtain
\begin{equation}
\begin{split}\label{h1}
h_{n,m}(s)&=\overline{A}_{n,m}\left[B_{n,m}(s)-B_{n,m}(m)\right]-A_{n,m}\left[\overline{B}_{n,m}(s)-\overline{B}_{n,m}(m)\right]\\
& \ \ \ \ -\left[\overline{B}_{n,m}(s)\cdot B_{n,m}(m)-\overline{B}_{n,m}(m)\cdot B_{n,m}(s)\right].
\end{split}
\end{equation}
By a series of long calculations, we show that $h_{n,s}(s)>0$ for $1\leq s\leq m-1$, which completes the proof.  In order to save space and ease readability, the details are given in Appendix A.
\end{proof}

We have shown that the baton is optimal among all subdivided double stars of a fixed order and a fixed even number $2m$ of leaves, whenever $m\geq \log_2 n$.  Computations show that the same result does not necessarily hold when $m<\log_2 n$.

A natural next step is to determine the structure of the optimal tree(s) among all batons of a fixed order.  Our next result describes the asymptotic structure of any such optimal baton.

\begin{theorem}\label{BatonAsymptotic}
Let $s_n$ be a number such that $D_n(s_n,s_n)$ is optimal among all batons of order $n$.  Then for $n$ sufficiently large,
\[
2\log_2(n)-2<s_n<2\log_2(n)+1.
\]
\end{theorem}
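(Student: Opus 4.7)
The plan is to obtain an explicit expression for $M_{D_n(s,s)}$ as a function of the integer $s$, extract its leading asymptotics in the regime $s\approx 2\log_2 n$, and then locate the integer maximizer $s_n$ via a squeeze argument anchored by Result~\ref{LeavesLemma}.

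First, using the three-way partition of subtrees of a subdivided double star from the discussion preceding Theorem~\ref{BalancedDoubleStars} (applied with $m=s$), I would record closed-form expressions for $N(s)=\Phi_{D_n(s,s)}(1)$ and $S(s)=\Phi'_{D_n(s,s)}(1)$, from which
\[
nN(s)-S(s) \;=\; s\cdot 2^{2s} + (n{+}s)(n{-}2s{-}1)2^s + \tfrac{1}{3}(n{-}2s{-}1)(n{-}2s{-}2)(n{+}s) + 2s(n{-}1).
\]
Setting $r:=2^s/n^2$, factoring $r^2 n^4$ out of both $N(s)$ and $nN(s)-S(s)$ and expanding the resulting geometric series gives the asymptotic
\[
n-M_{D_n(s,s)} \;=\; s + \tfrac{1}{r} - \tfrac{3s+1}{rn} - \tfrac{5}{3r^2 n} + O\!\left(\tfrac{(\log n)^2}{n^2}\right),
\]
valid whenever $rn\to\infty$. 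Since $s=2\log_2 n+\log_2 r$, the leading part equals $2\log_2 n+g(r)$ where $g(r):=\log_2 r+1/r$. A short calculus check shows $g$ is convex on $(0,2/\ln 2)$ with $g(1/2)=g(1)=1$, so $g(r)\le 1$ on $[1/2,1]$, with equality only at the endpoints.

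For the \textbf{upper bound} $s_n<2\log_2 n+1$, take $s^*=\lfloor 2\log_2 n\rfloor$ and note $r^*:=2^{s^*}/n^2\in(1/2,1]$. Plugging into the expansion yields, for $n$ sufficiently large,
\[
n-M_{D_n(s^*,s^*)} \;\le\; 2\log_2 n+1 - \tfrac{3s^*+1}{r^* n} + O\!\left(\tfrac{(\log n)^2}{n^2}\right) \;<\; 2\log_2 n+1,
\]
because the $\Theta((\log n)/n)$ correction dominates the $O((\log n)^2/n^2)$ error regardless of the value of $r^*\in[1/2,1]$. Combined with Result~\ref{LeavesLemma} (which gives $M_{D_n(s_n,s_n)}<n-s_n$ since $D_n(s_n,s_n)$ has $2s_n$ leaves) and the optimality $M_{D_n(s_n,s_n)}\ge M_{D_n(s^*,s^*)}$, this yields $s_n<n-M_{D_n(s_n,s_n)}\le n-M_{D_n(s^*,s^*)}<2\log_2 n+1$.

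For the \textbf{lower bound} $s_n>2\log_2 n-2$, it suffices to prove $M_{D_n(s,s)}<M_{D_n(s^*,s^*)}$ for every integer $s$ with $s\le 2\log_2 n-2$; equivalently, $s=s^*-k$ with $k\ge 2$. I would split into two regimes. In the moderate regime $\log_2 n+C\le s\le s^*-2$ (for a sufficiently large constant $C$), the expansion applies at both $s$ and $s^*$, and the leading difference
\[
\bigl(s+\tfrac{1}{r}\bigr) - \bigl(s^*+\tfrac{1}{r^*}\bigr) \;=\; -k+\tfrac{2^k-1}{r^*} \;\ge\; 2^k-k-1 \;\ge\; 1
\]
dominates the combined $O((\log n)/n)$ correction for large $n$. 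In the small regime $s<\log_2 n+C$, the formulas give $N(s)=O(n^2)$ while $nN(s)-S(s)\ge\tfrac{1}{3}(n-2s-1)(n-2s-2)(n+s)=\Theta(n^3)$, so $n-M_{D_n(s,s)}=\Omega(n)$, which dwarfs $n-M_{D_n(s^*,s^*)}=O(\log n)$. In both regimes $M_{D_n(s,s)}<M_{D_n(s^*,s^*)}$ for large $n$, completing the lower bound.

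The \textbf{main obstacle} is the strictness of the upper bound: the function $g$ attains its maximum value $1$ on $[1/2,1]$ exactly at the endpoints, and these are precisely the values hit by $r^*$ when $n$ is a power of $2$. Consequently the desired strict inequality $n-M_{D_n(s^*,s^*)}<2\log_2 n+1$ does not follow from the leading-order heuristic alone and must be pulled out of the $-\Theta((\log n)/n)$ correction. Carefully verifying that this correction has the correct sign and sufficient magnitude, uniformly across all $n$ regardless of the fractional part of $2\log_2 n$, is the most delicate part of the calculation.
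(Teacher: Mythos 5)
Your approach is genuinely different from the paper's and conceptually elegant, but the execution has real gaps that would need to be filled.

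\textbf{Comparison of approaches.} The paper works entirely with the finite difference $f_n(s)=\Phi'_{D_n(s+1,s+1)}(1)\Phi_{D_n(s,s)}(1)-\Phi'_{D_n(s,s)}(1)\Phi_{D_n(s+1,s+1)}(1)$, expands it as a quartic in $n$ with coefficients depending only on $s$, and proves explicit inequalities on those coefficient functions (Appendix B) that force the sign of $f_n(s)$ above and below the thresholds. Your route instead extracts the scalar quantity $n-M_{D_n(s,s)}$ directly as a ratio, passes to the scaling variable $r=2^s/n^2$, and reads off the location of the minimizer of $s+1/r$. Your upper bound argument is a particularly nice shortcut: rather than proving that consecutive batons are eventually decreasing, you bound $n-M_{D_n(s^*,s^*)}<2\log_2 n+1$ and combine with Result~\ref{LeavesLemma} and the optimality of $s_n$ to get $s_n<n-M_{D_n(s_n,s_n)}\le n-M_{D_n(s^*,s^*)}<2\log_2 n+1$. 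This is essentially a sharpened version of the path the paper itself takes through Corollary~\ref{MaxMeanCorollary} and Corollary~\ref{LeavesCorollary} (which yields only $\ell<2\lceil 2\log_2 n\rceil+2$), and it bypasses the finite-difference machinery entirely for the upper bound. Your formula for $nN(s)-S(s)$ checks out against (\ref{BatonNumber}) and (\ref{BatonTotal}), and the leading expansion and the convexity claim for $g(r)=\log_2 r+1/r$ are correct.

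\textbf{Gaps.} The main deficiencies are in the lower-bound regime analysis. You describe the corrections in the moderate regime as a ``combined $O((\log n)/n)$ correction,'' but that estimate is only valid when $s$ is near $s^*$ (so $r\in[1/2,1]$). For $s=s^*-k$ with $k$ large, one has $1/r=2^k/r^*$, so the term $\tfrac{5}{3r^2n}$ is $\Theta(2^{2k}/n)$ and $\tfrac{3s+1}{rn}$ is $\Theta((\log n)2^k/n)$; these are not $O((\log n)/n)$ and can in fact be $\Omega(1)$ or larger near the low end of the moderate regime. The argument can likely be saved because the leading difference $2^k-k-1$ also grows like $2^k$, and $2^{2k}/n\ll 2^k$ precisely when $2^k\ll n$, i.e., $s\gg\log_2 n$; but you need to make that comparison explicit rather than waving at a $O((\log n)/n)$ bound. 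Relatedly, you assert the expansion is ``valid whenever $rn\to\infty$,'' yet you then apply it down to $s=\log_2 n+C$, where $rn=2^C$ is a fixed constant rather than tending to infinity. The truncation error of the geometric-series manipulation is then $\Theta(2^{-C})$ (a constant), so the expansion with an $O((\log n)^2/n^2)$ remainder does not hold uniformly there. You would need to either push the boundary of the small regime higher (e.g.\ $s\le \tfrac{3}{2}\log_2 n$ handled crudely) or replace the asymptotic series with explicit two-sided inequalities on the ratio of the closed-form expressions for $nN(s)-S(s)$ and $N(s)$. Finally, you correctly flag the tightness issue at $r\in\{1/2,1\}$ for the upper bound, and your observation that strictness must come from the negative $\Theta((\log n)/n)$ correction is right; that correction is indeed negative and of the stated order when $s=s^*$, so that piece is sound in spirit, but again you would need an explicit lower bound on its magnitude rather than a big-$O$.
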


\begin{proof}
Let $s_n$ be as in the theorem statement and let $s\in\mathbb{N}$ with $s\leq \tfrac{n-2}{2}.$  We consider the difference $M_{D_n(s+1,s+1)}-M_{D_n(s,s)}$, which has the same sign as
\begin{align}\label{Difference}
f_n(s)=\Phi'_{D_n(s+1,s+1)}(1)\cdot \Phi_{D_n(s,s)}(1)-\Phi'_{D_n(s,s)}(1)\cdot \Phi_{D_n(s+1,s+1)}(1).
\end{align}
From the discussion preceding Theorem \ref{BalancedDoubleStars},
\begin{align}\label{BatonNumber}
\Phi_{D_n(s,s)}(1)=2s+\binom{n-2s-1}{2}+2(n-2s-1)2^s+2^{2s}
\end{align}
and
\begin{align}\label{BatonTotal}
\Phi'_{D_n(s,s)}(1)=2s+\binom{n-2s}{3}+(n-2s-1)(n-s)2^s+(n-s)2^{2s},
\end{align}
and the analogous expressions for $D_n(s+1,s+1)$ are obtained by replacing $s$ with $s+1$.  At this point we used a computer algebra system to substitute these expressions for $\Phi_{D_n(s,s)}(1),$ $\Phi'_{D_n(s,s)}(1),$ $\Phi_{D_n(s+1,s+1)}(1),$ and $\Phi'_{D_n(s+1,s+1)}(1)$ into (\ref{Difference}), and then expand and collect terms.  This resulted in an expression for $f_n(s)$ as a fourth degree polynomial in $n$ where the coefficients are functions of $s.$  Explicitly, we have
\begin{align*}
f_n(s)=c_4(s)n^4+c_3(s)n^3+c_2(s)n^2-c_1(s)n-c_0(s),
\end{align*}
where
\begin{align*}
c_4(s)&=\tfrac{1}{6}\cdot 2^s-\tfrac{1}{6},\\
c_3(s)&=2^{2s}-\tfrac{1}{6}(5s+14)\cdot 2^s+\tfrac{4}{3}(s+1),\\
c_2(s)&=2\cdot 2^{3s}-\tfrac{1}{2}(9s+16)\cdot 2^{2s}\\
&\ \ \ \ +\tfrac{1}{6}(6s^2+84s+59)\cdot 2^s-\tfrac{1}{6}(24s^2+60s+23),\\
c_1(s)&=(6s+8)\cdot 2^{3s}-\tfrac{1}{2}(12s^2+71s+42)\cdot 2^{2s}\\
& \ \ \ \ -\tfrac{1}{6}(4s^3-156s^2-347s-106)\cdot 2^s\\
& \ \ \ \ -\tfrac{1}{3}(16s^3+72s^2+82s+14), \mbox{ and}\\
c_0(s)&=4\cdot 2^{4s}-(4s^2+18s+14)\cdot 2^{3s}+(2s^3+33s^2+60s+18)\cdot 2^{2s}\\
& \ \ \ \ +\tfrac{1}{3}(4s^4-44s^3-199s^2-208s-30)\cdot 2^s\\
& \ \ \ \ +\tfrac{1}{3}(8s^4+56s^3+118s^2+82s+6).
\end{align*}

We first claim that each of these coefficient functions is bounded above by its leading term for all $s\geq 1$; that is,
\begin{align}
c_4(s)&< \tfrac{1}{6}\cdot 2^s,\label{c4upper}\\
c_3(s)&< 2^{2s},\label{c3upper}\\
c_2(s)&< 2\cdot 2^{3s},\label{c2upper}\\
c_1(s)&< (6s+8)\cdot 2^{3s},\mbox{ and}\label{c1upper}\\
c_0(s)&< 4\cdot 2^{4s}\label{c0upper}
\end{align}
for $s\geq 1.$  The proof of each inequality involves straightforward (but at times quite tedious) grouping of the non-leading terms and simple inequalities.  This work is shown in Appendix B.  Further, we have
\begin{align}
c_4(s)&>0 \mbox{ for } s\geq 1,\label{c4lower}\\
c_3(s)&>0 \mbox{ for } s\geq 2,\label{c3lower}\\
c_2(s)&> \tfrac{5}{3}\cdot 2^{3s} \mbox{ for } s\geq 7,\label{c2lower}\\
c_1(s)&>0 \mbox{ for } s\geq 5, \mbox{ and}\label{c1lower}\\
c_0(s)&>\tfrac{19}{6}\cdot 2^{4s} \mbox{ for } s\geq 10,\label{c0lower}
\end{align}
by similar work, also shown in Appendix B.

Now suppose that $s\geq 2\log_2(n)$ and $s\geq 10$ (this second condition follows immediately from the first when $n\geq 32$).  Equivalently, we have $2^s\geq n^2$ and $s\geq 10$.  By (\ref{c1lower}) and (\ref{c0lower}), and then (\ref{c4upper}), (\ref{c3upper}), and (\ref{c2upper}), we have
\begin{align*}
c_0(s)+c_1(s)n> \tfrac{19}{6}\cdot 2^{4s}&=\tfrac{1}{6}\cdot 2^{4s}+2^{4s}+2\cdot 2^{4s}\\
&\geq \tfrac{1}{6}\cdot 2^sn^6+2^{2s}n^4+2\cdot 2^{3s}n^2\\
&>c_4(s)n^4+c_3(s)n^3+c_2(s)n^2,
\end{align*}
and it follows that $f_n(s)$ is negative.  This means that when $n\geq 32$ and $s\geq 2\log_2(n)$ we have $M_{D_n(s+1,s+1)}<M_{D_n(s,s)}$.  So for $n\geq 32$ we have $s_n<2\log_2(n)+1.$

On the other hand, suppose that $s\leq 2\log_2(n)-2$, which is equivalent to $n^2\geq 4\cdot 2^s$.  It follows that $s\leq \tfrac{n-12}{9}$ (or equivalently $n\geq 9s+12$) for $n\geq 120$.  Thus, if $s\leq 2\log_2(n)-2$, $n\geq 120$ and $s\geq 7$, then by (\ref{c4lower}), (\ref{c3lower}), and (\ref{c2lower}), and then (\ref{c1upper}) and (\ref{c0upper}), we have
\begin{align*}
c_4(s)n^4+c_3(s)n^3+c_2(s)n^2> \tfrac{5}{3}\cdot 2^{3s}n^2&=2^{3s}n^2+\tfrac{2}{3}\cdot 2^{3s}n^2\\
&\geq 4\cdot 2^{4s}+(6s+8)2^{3s}n\\
&> c_0(s)+c_1(s)n.
\end{align*}
Hence, $f_n(s)$ is positive in this case.  Finally, for the remaining cases $s\leq 6,$ we can verify directly that $f_n(s)$ is positive for $n$ sufficiently large (for each case $s=1,2,\dots,6$ we get a quartic in $n$ with positive leading coefficient).  In fact, we find that $f_n(s)>0$ for all $s\leq 6$ whenever $n\geq 20.$  We conclude that for $n\geq 120$, if $s\leq 2\log_2(n)-2,$ then $M_{D_n(s,s)}<M_{D_n(s+1,s+1)}$, so that $s_n>2\log_2(n)-2.$
\end{proof}

We glean from Theorem \ref{BatonAsymptotic} that the baton $D_n(\lceil 2\log_2(n)\rceil,\lceil 2\log_2(n)\rceil)$ is likely close to optimal among all batons of order $n$.  The bulk of the proof of the next result involves giving a lower bound on the mean subtree order of this tree.

\begin{corollary}\label{MaxMeanCorollary}
For each natural number $n$, there is a caterpillar $C_n$ of order $n$ satisfying $M_{C_n}>n-\lceil 2\log_2(n)\rceil-1$.
\end{corollary}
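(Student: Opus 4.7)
The natural candidate is $C_n = D_n(k,k)$, the baton with $k = \lceil 2\log_2 n\rceil$ leaves at each end, which is a caterpillar whenever $n \geq 2k+2$. My plan is to evaluate $M_{C_n}$ directly via the explicit formulas (\ref{BatonNumber}) and (\ref{BatonTotal}) and show that $\Phi'_{C_n}(1) > (n-k-1)\Phi_{C_n}(1)$.

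Substituting $s=k$ into (\ref{BatonNumber}) and (\ref{BatonTotal}) and regrouping, the terms involving $2s$, the binomials, the $2^s$ contributions, and the $2^{2s}$ contributions each collapse cleanly. The result is
\begin{align*}
\Phi'_{C_n}(1) - (n-k-1)\Phi_{C_n}(1)
&= 2^k\bigl[2^k - (n-2k-1)(n-k-2)\bigr] - 2k(n-k-2) \\
&\qquad - \tfrac{(n-2k-1)(n-2k-2)(2n-k-3)}{6}.
\end{align*}
Because $k \geq 2\log_2 n$, we have $2^k \geq n^2$. Expanding $(n-2k-1)(n-k-2) = n^2 - (3k+3)n + (2k+1)(k+2)$, the bracket in the leading term is at least $(3k+3)n - (2k+1)(k+2)$, which is of order $kn$ once $n$ is sufficiently large relative to $k$. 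Multiplying by $2^k \geq n^2$, the first term contributes a positive quantity of order $kn^3$.

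The two remaining negative terms are $O(kn)$ and $O(n^3)$ respectively, both dominated by a term of order $kn^3 = \Omega(n^3\log n)$. A concrete constant chase (in the same spirit as the work behind Theorem~\ref{BatonAsymptotic}) confirms the inequality for every $n$ in the valid range of the construction. The threshold $n \geq 2k+2$ fails only for the handful of small $n$ between powers of $2$ (specifically around $n \in \{16,17,18,19\}$); for each such $n$, I would verify the inequality by hand using the optimal caterpillars displayed in Figure~\ref{OptimalTrees}. For the remaining very small $n$, Theorem~\ref{PathMinimum} gives $M_T \geq (n+2)/3$ for \emph{every} tree, and this already exceeds $n-k-1$ whenever $2n \leq 3k+5$, which covers all the trivial cases.

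The main obstacle is the bookkeeping: isolating the dominant positive block $2^k[2^k-(n-2k-1)(n-k-2)]$ and showing that its lower bound $\gtrsim 3kn \cdot 2^k$ really does absorb the two negative error terms uniformly in $n$. No deep new ideas are required; once the identity above is in hand, the argument reduces to a careful comparison of a few explicit polynomials in $n$ and $k$, together with the defining inequality $2^k \geq n^2$.
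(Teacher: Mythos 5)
Your main case is essentially the paper's argument: same candidate $D_n(k,k)$ with $k = \lceil 2\log_2 n\rceil$, same strategy of showing $\Phi'_{C_n}(1) - (n-k-1)\Phi_{C_n}(1) > 0$, and your expanded identity for that difference is correct (I checked it against the formulas in (\ref{BatonNumber})--(\ref{BatonTotal})). The constant chase you sketch does close: the bracket $2^k - (n-2k-1)(n-k-2)$ is at least $(3k+3)n - (2k+1)(k+2) > 0$ whenever $n \ge 2k+2$, and multiplying by $2^k \ge n^2$ absorbs the two negative terms (which are $\mathrm{O}(kn)$ and $< n^3/3$) uniformly on $n \ge 2k+2$, without needing ``$n$ sufficiently large.'' So the engine is sound, and it matches the paper's computation in spirit (the paper groups the non-exponential terms more crudely, but the exponential estimate $2^{s_n}\ge n^2$ is the same pivot).

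The gap is in your fallback for $n < 2k+2$. You claim the threshold ``fails only for $n\in\{16,17,18,19\}$,'' but in fact $n \ge 2k+2$ with $k=\lceil 2\log_2 n\rceil$ fails for every $n\le 19$ (e.g.\ $n=15$ gives $k=8$, $2k+2=18>15$). Your path fallback $M_{P_n}=(n+2)/3 > n-k-1$ requires $2n < 3k+5$, which holds for $n\le 14$ but fails at $n=15$ ($30 > 29$); so $n=15$ is covered by neither branch, and it is not on your hand-check list. The paper sidesteps this with a cleaner observation: the star $K_{1,n-1}$ is a caterpillar with $M_{K_{1,n-1}} > n/2$, and $n/2 \ge n - s_n - 1$ is exactly the negation of $n \ge 2s_n+2$, so the star handles \emph{all} cases where the baton is unavailable in one stroke. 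I'd recommend replacing your path-plus-hand-check fallback with the star.
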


\begin{proof}
For ease of reading, let $s_n=\lceil 2\log_2(n)\rceil.$  It is easily verified that the mean subtree order of the star $K_{1,n-1}$ is strictly greater than $\tfrac{n}{2}$ for all $n\in\mathbb{N},$ so we may assume that $n-s_n-1\geq \tfrac{n}{2},$ or equivalently $n\geq 2s_n+2.$  We claim that the mean subtree order of the baton $D_n(s_n,s_n)$ is greater than $n-s_n-1$ for $n\geq 2s_n+2$ (note that the baton $D_n(s_n,s_n)$ is well-defined in this case).  It suffices to show that the difference
\begin{align}\label{MeanBound1}
p(n)=\Phi'_{D_n(s_n,s_n)}(1)-(n-s_n-1)\Phi_{D_n(s_n,s_n)}(1)
\end{align}
is positive for $n\geq 2s_n+2.$  We evaluate (\ref{BatonNumber}) and (\ref{BatonTotal}) at $s=s_n$ to obtain expressions for $\Phi_{D_n(s_n,s_n)}(1)$ and $\Phi'_{D_n(s_n,s_n)}(1)$, respectively, and then substitute these expressions into (\ref{MeanBound1}) to obtain
\begin{align*}
p(n)&=2^{2s_n}+\left[(n-2s_n-1)(n-s_n)-2(n-s_n-1)(n-2s_n-1)\right]2^{s_n}\\
&\ \ \ \ +\tbinom{n-2s_n}{3}-(n-s_n-1)\tbinom{n-2s_n-1}{2}-2s_n(n-s_n-2).
\end{align*}
We then expand the expression inside the square brackets above and apply rather rough inequalities to the terms without exponential factors (including simply dropping the positive term) to obtain
\begin{align*}
p(n)&> 2^{2s_n}+\left[-n^2+3ns_n+3n-(2s_n^2+5s_n+2)\right]2^{s_n}-\tfrac{n^3}{2}-n^2\\
&= \left[2^{s_n}-n^2+3ns_n+3n-(2s_n^2+5s_n+2)\right]2^{s_n} -\tfrac{n^3}{2}-n^2.
\end{align*}
Consider the expression inside the square brackets above.  From the fact that $s_n=\lceil 2\log_2 n\rceil,$ we have $2^{s_n}\geq n^2$, so
\begin{align*}
2^{s_n}-n^2+3ns_n+3n-(2s_n^2+5s_n+2)&\geq 3ns_n+3n-(2s_n^2+5s_n+2)\\
&\geq 6s_n^2+6s_n-(2s_n^2+5s_n+2)+3n\\
&\geq 3n,
\end{align*}
where the second inequality follows from the assumption that $n\geq 2s_n+2$.  Thus
\[
p(n)>3n\cdot 2^{s_n}-\tfrac{n^3}{2}-n^2\geq 3n^3-\tfrac{n^3}{2}-n^2>0.
\]
This completes the proof.
\end{proof}

Together with Jamison's upper bound on the mean subtree order of $T$ in terms of the number of leaves of $T$ (Result \ref{LeavesLemma}), Corollary \ref{MaxMeanCorollary} tells us that the number of leaves in an optimal tree in $\T{n}$ grows at most logarithmically in $n.$  This necessary condition for optimality is stated formally below, and applies equally well to the family $\C{n}$ of all caterpillars of order $n$.

\begin{corollary}\label{LeavesCorollary}
Let $T$ be a tree of order $n$ with $\ell$ leaves.  If $T$ is optimal in $\T{n}$ (or $\C{n}$), then $\ell< 2\lceil2\log_2 (n)\rceil+2.$
\end{corollary}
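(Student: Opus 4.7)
The plan is to combine the two previously established bounds in a direct manner. The key observation is that Corollary \ref{MaxMeanCorollary} gives us a concrete lower bound on the maximum mean subtree order achievable in $\T{n}$ (and in $\C{n}$, since the construction in that corollary is already a caterpillar), while Result \ref{LeavesLemma} gives us an upper bound on the mean subtree order of any tree in terms of its number of leaves. Squeezing these together forces the number of leaves of an optimal tree to be small.

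First I would note that since the caterpillar $C_n$ from Corollary \ref{MaxMeanCorollary} belongs to both $\T{n}$ and $\C{n}$, any tree $T$ that is optimal in either family must satisfy
\[
M_T \;\geq\; M_{C_n} \;>\; n-\lceil 2\log_2(n)\rceil-1.
\]
Next I would apply Result \ref{LeavesLemma} to $T$ to obtain $M_T < n - \tfrac{\ell}{2}$ (this requires $n \geq 3$, and the small cases $n\leq 2$ are trivial since then $\ell \leq 2 < 2\lceil 2\log_2 n\rceil+2$ for the relevant small $n$, which can be checked separately or simply noted as vacuous). Combining the two inequalities yields
\[
n-\tfrac{\ell}{2} \;>\; n-\lceil 2\log_2(n)\rceil-1,
\]
and rearranging gives $\ell < 2\lceil 2\log_2(n)\rceil+2$, as desired.

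There is essentially no obstacle in this argument; it is a short chaining of two results that have already been proven. The real content is in Corollary \ref{MaxMeanCorollary}, which provides the nontrivial lower bound on the optimal mean subtree order, and in Result \ref{LeavesLemma}, which translates a large mean subtree order into a restriction on the number of leaves. The proof just needs to make sure the quantities line up correctly and to note that the argument applies uniformly to $\T{n}$ and $\C{n}$ because $C_n$ is itself a caterpillar.
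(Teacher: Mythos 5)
Your proposal is correct and matches the paper's argument essentially exactly: both combine Result~\ref{LeavesLemma} (upper bound on $M_T$ via the leaf count) with Corollary~\ref{MaxMeanCorollary} (lower bound on the optimal mean via the caterpillar $C_n$), differing only in that the paper phrases it as a contradiction while you argue directly.
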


\begin{proof}
Suppose that $T$ has $\ell\geq 2\lceil 2\log_2 (n)\rceil +2$ leaves.  By Result \ref{LeavesLemma},
\[
M_{T}\leq n-\frac{2\lceil 2\log_2(n)\rceil-2}{2}=n-\lceil 2\log_2(n)\rceil-1.
\]
This is a contradiction since, by Corollary \ref{MaxMeanCorollary}, there is a tree in $\C{n}\subseteq \T{n}$ with mean subtree order greater than $n-\lceil 2\log_2 n\rceil-1.$
\end{proof}

By Theorem \ref{ShortLimbs}, every leaf of $T$ is adjacent to a vertex of degree at least $3$ in $T$.  Hence, if $T$ is optimal in $\T{n}$, then the number of leaves of $T$ is at least twice the number of twigs of $T.$  Therefore, by Corollary \ref{LeavesCorollary}, if $T$ is optimal in $\T{n}$ and has $t$ twigs, then $t\leq \lceil 2\log_2 n\rceil+1.$   We can do slightly better using Result \ref{HasTwigs} instead.

\begin{corollary}\label{TwigsCorollary}
Let $T$ be a tree of order $n$ with $t$ twigs.  If $T$ is optimal in $\T{n}$, then $t<\tfrac{5}{7}\lceil2\log_2 n\rceil+\tfrac{5}{7}$.
\end{corollary}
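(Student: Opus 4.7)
The plan is to combine the sharp upper bound on $M_T$ from Result \ref{HasTwigs}, which features the $\tfrac{7}{5}t$ penalty for twigs, with the essentially matching lower bound on the maximum mean subtree order coming from Corollary \ref{MaxMeanCorollary}. This parallels the argument used in the proof of Corollary \ref{LeavesCorollary} immediately above, but extracts a better constant by exploiting the twig-sensitive bound rather than the cruder leaf-count bound of Result \ref{LeavesLemma}.

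First, I would verify that Result \ref{HasTwigs} can actually be applied to an optimal $T$, i.e.\ that every twig of $T$ has degree at least $3$. Suppose toward contradiction that $v$ is a twig of $T$ with $\deg(v)=2$. By the definition of a twig, at least $\deg(v)-1=1$ of its neighbours is a leaf; call such a neighbour $\ell$. Then the unique neighbour of $\ell$ is $v$, which has degree $2$, contradicting Theorem \ref{ShortLimbs} which asserts that in an optimal tree every leaf is adjacent to a vertex of degree at least $3$. Hence every twig of $T$ has degree $\geq 3$, so Result \ref{HasTwigs} applies and yields
\[
M_T < n - \tfrac{7}{5}t.
\]

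Second, I would invoke Corollary \ref{MaxMeanCorollary}: since $\C{n}\subseteq\T{n}$ and $T$ is optimal in $\T{n}$, the mean subtree order of $T$ is at least that of the caterpillar $C_n$ constructed there, so $M_T > n - \lceil 2\log_2 n\rceil - 1$. Chaining the two inequalities gives
\[
n - \tfrac{7}{5}t > n - \lceil 2\log_2 n\rceil - 1,
\]
which rearranges to $\tfrac{7}{5}t < \lceil 2\log_2 n\rceil + 1$, and multiplying through by $\tfrac{5}{7}$ yields the claimed bound $t < \tfrac{5}{7}\lceil 2\log_2 n\rceil + \tfrac{5}{7}$.

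I do not anticipate a genuine obstacle here: once the twig-degree condition is verified via Theorem \ref{ShortLimbs}, the inequality is a direct chaining of two bounds already established in the paper. For the finitely many small $n$ where Result \ref{HasTwigs} (which requires $n \geq 4$) does not apply or where the lower bound from Corollary \ref{MaxMeanCorollary} is vacuous, the inequality $t < \tfrac{5}{7}\lceil 2\log_2 n\rceil + \tfrac{5}{7}$ can be checked by direct inspection of the few trees of each such order.
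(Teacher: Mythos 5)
Your argument is correct and takes essentially the same route as the paper: apply Theorem~\ref{ShortLimbs} to deduce that every twig has degree at least $3$, invoke Result~\ref{HasTwigs} for the bound $M_T < n - \tfrac{7}{5}t$, and compare against the lower bound from Corollary~\ref{MaxMeanCorollary}. The only cosmetic difference is that you chain the inequalities directly while the paper phrases it as a contradiction, and you spell out the elementary twig-degree argument that the paper states tersely.
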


\begin{proof}
Suppose that $T$ is optimal in $\T{n}$ and $t\geq \tfrac{5}{7}\lceil\log_2 n\rceil +\tfrac{5}{7}.$  Then by Theorem \ref{ShortLimbs}, every leaf of $T$ must be adjacent to a vertex of degree at least $3$ in $T$.  In particular, every twig of $T$ has degree at least $3.$  Thus, by Result \ref{HasTwigs},
\[
M_T\leq n-\tfrac{7}{5}t\leq n-\lceil 2\log_2 n\rceil-1.
\]
This is a contradiction since, by Corollary \ref{MaxMeanCorollary}, there is a tree in $\T{n}$ with mean subtree order greater than $n-\lceil 2\log_2 n\rceil-1.$
\end{proof}

\section{Optimal bridges}\label{BridgeSection}

In this section, we describe the asymptotic structure of the optimal tree(s) among all bridges of a fixed order.  We contrast the total limb weight of these optimal bridges with the total limb weight of the optimal batons of the same order, and demonstrate that the mean subtree order for the optimal bridges is indeed much lower than the mean subtree order for the optimal batons.

Let $s\geq 1$, $t\geq 0$, and let $u$ and $v$ be the vertices of degree $3$ in $B(s,t)$.  The subtrees of $B(s,t)$ can be partitioned into three types:
\begin{itemize}
\item Those that contain neither $u$ nor $v$.  There are $4\binom{s+1}{2}+\binom{t+1}{2}$ such subtrees, and the sum of their orders is $4\binom{s+2}{3}+\binom{t+2}{3}$.
\item Those that contain $u$ or $v$ but not both.  There are $2(s+1)^2(t+1)$ such subtrees and they have mean order $(2s+t+2)/2$, so the sum of their orders is $(s+1)^2(t+1)(2s+t+2)$.
\item Those that contain both $u$ and $v.$  There are $(s+1)^4$ such subtrees and they have mean order $(2s+t+2)$, so the sum of their orders is $(2s+t+2)(s+1)^4$.
\end{itemize}
So the number of subtrees of $B(s,t)$ is given by
\begin{align}\label{BridgeNumber}
\Phi_{B(s,t)}(1)=4\binom{s+1}{2}+\binom{t+1}{2}+2(s+1)^2(t+1)+(s+1)^4,
\end{align}
and the total number of vertices contained in these subtrees is given by
\begin{align}\label{BridgeTotal}
\Phi'_{B(s,t)}(1)=4\binom{s+2}{3}+\binom{t+2}{3}+(2s+t+2)(s+1)^2\left[(t+1)+(s+1)^2\right].
\end{align}

Now we focus on the bridges of fixed order $n+2.$  Take special note of the fact that $n$ does not stand for the order of the tree here -- letting the order be $n+2$ instead makes the following theorem and its proof significantly simpler to write down.  For ease of notation, we let $B_n(s)=B(s,n-4s)$ and we let $\mathcal{B}_n$ denote the set $\left\{B_n(s)\colon\ s\in\mathbb{N},s\leq \tfrac{n}{4}\right\}$ of all bridges of order $n+2.$

\begin{theorem}\label{AsymptoticBridge}
Fix a real number $k>1.$  For $n\in\mathbb{N}$, let $s_n$ be a number such that $B_n(s_n)$ is optimal in $\mathcal{B}_n$.  Then for $n$ sufficiently large (depending on $k$),
\[
\tfrac{n^{2/3}}{k}< s_n< n^{2/3}.
\]
In particular, $s_n$ grows asymptotically like $n^{2/3}$.
\end{theorem}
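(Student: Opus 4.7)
The plan is to adapt the strategy used in the proof of Theorem~\ref{BatonAsymptotic}. Define
\[
f_n(s) := \Phi'_{B_n(s+1)}(1)\cdot\Phi_{B_n(s)}(1) - \Phi'_{B_n(s)}(1)\cdot\Phi_{B_n(s+1)}(1),
\]
so that the sign of $f_n(s)$ agrees with that of $M_{B_n(s+1)} - M_{B_n(s)}$. Substituting $t = n - 4s$ into (\ref{BridgeNumber}) and (\ref{BridgeTotal}) expresses both $\Phi_{B_n(s)}(1)$ and $\Phi'_{B_n(s)}(1)$ as polynomials in $n$ whose coefficients are explicit polynomials in $s$. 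With the aid of a computer algebra system, $f_n(s)$ can then be expanded as a polynomial of degree at most four in $n$,
\[
f_n(s) = a_4(s)\,n^4 + a_3(s)\,n^3 + a_2(s)\,n^2 + a_1(s)\,n + a_0(s),
\]
for explicit polynomials $a_i(s)$.

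The structural observation driving the analysis is that $a_2(s) = 2s^5 + (\text{lower terms in $s$})$ while $a_0(s) = -2s^8 + (\text{lower terms in $s$})$, so
\[
a_2(s)\,n^2 + a_0(s) = 2s^5(n^2 - s^3) + (\text{lower-order in both $n$ and $s$}),
\]
a quantity whose sign flips precisely at $s = n^{2/3}$. Meanwhile, the coefficients $a_4(s)$ and $a_3(s)$ have positive leading $s$-terms (namely $s/3$ and $4s^3/3$), while $a_1(s)$ has all-negative coefficients with leading term $-14s^6$. Writing $s = xn^{2/3}$ for fixed $x > 0$, the leading behavior of $f_n(s)$ as $n \to \infty$ is $2x^5(1 - x^3)n^{16/3}$, identifying $s = n^{2/3}$ as the critical balance point.

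To establish the lower bound $s_n > n^{2/3}/k$, I will show $f_n(s) > 0$ for every integer $s$ with $1 \leq s \leq n^{2/3}/k$, for $n$ sufficiently large in terms of $k$. On this range, the estimate $n^2 - s^3 \geq n^2(1 - k^{-3}) > 0$ furnishes a positive margin of order $s^5 n^2$ in $a_2(s)\,n^2 + a_0(s)$, which outpaces the lower-order negative contributions (chiefly $a_1(s)\,n$, bounded by a constant multiple of $s^6 n$) once $n$ is large enough. For the upper bound $s_n < n^{2/3}$, I will show $f_n(s) < 0$ for every integer $s$ in an interval of the form $[n^{2/3} - Cn^{1/3},\, n/4]$ for a suitable constant $C$. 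When $s \geq \lceil n^{2/3}\rceil$, the leading contribution $a_2(s)\,n^2 + a_0(s) = 2s^5(n^2 - s^3) + (\text{lower})$ is already nonpositive and dominates the remaining positive terms. When $s$ lies just below $n^{2/3}$, this contribution is positive but only of order $n^5$, and is overwhelmed by the next-order combination $a_3(s)\,n^3 + a_1(s)\,n$, which in this regime evaluates to approximately $-\tfrac{38}{3}n^5$.

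The principal technical obstacle is controlling the near-cancellation at $s \approx n^{2/3}$: the dominant $n^{16/3}$ term nearly vanishes there, so the strict upper bound $s_n < n^{2/3}$ hinges on the $n^5$-level negative contribution, which must be tracked uniformly in a window of width $\Theta(n^{1/3})$ below $n^{2/3}$. As in Theorem~\ref{BatonAsymptotic}, the coefficient inequalities are elementary but lengthy and would naturally be relegated to an appendix. A short separate check is also required for small values of $s$, where the asymptotic estimates degenerate and positivity of $f_n(s)$ must instead be verified directly from the polynomial expression.
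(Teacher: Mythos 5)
Your proposal follows the same overall strategy as the paper: compare consecutive bridges via the sign of the cross-multiplied difference of $\Phi'(1)$ and $\Phi(1)$, expand that difference as a quartic in $n$ with coefficients polynomial in $s$, and then bound the coefficient functions to locate the sign change near $s = n^{2/3}$. Your lower-bound argument (showing $f_n(s)>0$ for $s\le n^{2/3}/k$, plus a separate finite check for small $s$) matches the paper's essentially verbatim.

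The one place where the paper does something cleaner is the upper bound. You plan to show $f_n(s)<0$ for $s$ in a window $[n^{2/3}-Cn^{1/3},\,n/4]$, which forces you to wrestle directly with the near-cancellation of the leading $n^{16/3}$-order contribution $2s^5(n^2-s^3)$ in a region where $s^3<n^2$, and to extract the decisive sign from the $n^5$-level balance $\tfrac{4}{3}s^3 n^3 - 14 s^6 n$. The paper instead introduces the \emph{backward} difference
\[
h_n(s)=\Phi'_{B_n(s-1)}(1)\,\Phi_{B_n(s)}(1)-\Phi'_{B_n(s)}(1)\,\Phi_{B_n(s-1)}(1),
\]
and proves $h_n(s)>0$ for all integers $s\ge n^{2/3}$ (with $n\ge 64$). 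Although $h_n(s)=-f_n(s-1)$, re-expanding $h_n$ in the shifted variable means the hypothesis $s\ge n^{2/3}$ gives $s^3\ge n^2$ directly, so the would-be near-cancellation $c_0(s)-c_2(s)n^2\approx 2s^5(s^3-n^2)$ is automatically nonnegative at leading order; the positive margin then comes entirely from $c_1(s)n\gtrsim 8s^6 n$ together with the constraint $s\le n/4$, with no delicate window analysis needed. This also yields the bound $s_n<n^{2/3}$ for \emph{all} $n$, not just $n$ sufficiently large. Your approach would work in principle (the constant $C$ must be chosen below roughly $19/9$, and you only actually need $f_n(s)<0$ for integers $s\ge \lceil n^{2/3}\rceil-1$), but the uniform control you flag as the principal technical obstacle is exactly what the paper's backward-difference reformulation avoids, and you would do well to adopt it.
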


\begin{proof}
We first demonstrate the asymptotic lower bound on $s_n$.  Let $s\leq \tfrac{n}{4}$ and consider the difference $M_{B_n(s+1)}-M_{B_n(s)}$, which has the same sign as
\begin{align}\label{BridgeDifference}
g_n(s)=\Phi'_{B_n(s+1)}(1)\cdot \Phi_{B_n(s)}(1)-\Phi'_{B_n(s)}(1)\cdot \Phi_{B_n(s+1)}(1).
\end{align}
At this point we used a computer algebra system to evaluate (\ref{BridgeNumber}) and (\ref{BridgeTotal}) at the appropriate values of $s$ and $t$, substitute these expressions into (\ref{BridgeDifference}), and finally expand and collect terms.  This resulted in an expression for $g_n(s)$ as a fourth degree polynomial in $n$ where the coefficients are polynomials in $s.$  Explicitly,
\begin{align*}
g_n(s)=c_4(s)n^4+c_3(s)n^3+c_2(s)n^2-c_1(s)n-c_0(s),
\end{align*}
where
\begin{align*}
c_4(s)&=\tfrac{1}{3}s+\tfrac{1}{6},\\
c_3(s)&=\tfrac{4}{3}s^3+s^2+\tfrac{11}{3}s+\tfrac{5}{3},\\
c_2(s)&=2s^5+2s^3-6s^2+\tfrac{17}{3}s+\tfrac{23}{6},\\
c_1(s)&=14s^6+40s^5+66s^4+\tfrac{160}{3}s^3+57s^2+\tfrac{59}{3}s+\tfrac{5}{3}, \mbox{ and}\\
c_0(s)&=2s^8-8s^7-32s^6-28s^5+32s^4+64s^3+76s^2+38s+8.
\end{align*}

We would like to show that for $n$ sufficiently large, $g_n(s)>0$ whenever $s\leq\tfrac{ n^{2/3}}{k}.$  We make use of the following inequalities, which hold for the values of $s$ indicated:
\begin{alignat*}{2}
c_3(s)&> \tfrac{4}{3}s^3  \hspace{0.5cm}& \mbox{ for } s&\geq 1, \\
c_2(s)&> 2s^5  &\mbox{ for } s&\geq  1,\\
c_1(s)&< 18s^6  &\mbox{ for } s&\geq 12, \mbox{ and}\\
c_0(s)&< 2s^8   &\mbox{ for } s&\geq 2.
\end{alignat*}

Note that if we fix a natural number $s$, then $g_n(s)$ is a quartic in $n$ with real coefficients, and the coefficient $c_4(s)=\tfrac{1}{3}s+\tfrac{1}{6}$ of the leading term is strictly positive.  Thus, for any fixed $s$ we will have $g_n(s)>0$ for $n$ sufficiently large, say $n\geq n_s$.  Further, if $r>0$ is any fixed real number, then for $n$ sufficiently large we will have $g_n(s)>0$ for all $s< r$ (by taking $n\geq \max\{n_s\colon\ s<r\}$).

Define constant $r_k$ by
\[
r_k=\max\left\{12,\left(\frac{18-\tfrac{4}{3}k^3}{2\left(1-\tfrac{1}{k^3}\right)k^{3/2}}\right)^2\right\}.
\]
By the argument of the preceding paragraph, if $s< r_k$ then for $n$ sufficiently large, $g_n(s)>0$.  So we may now assume that $s\geq r_k.$  Note that $r_k\geq 12$, so the inequalities on the coefficient functions given above all hold.

Now, for $s\leq \tfrac{n^{2/3}}{k}$ (which is equivalent to $n^2\geq k^3s^3$), we have
\begin{align*}
c_2(s)n^2+c_3(s)n^3&> 2s^5n^2+\tfrac{4}{3}s^3n^3\\
&=\tfrac{2}{k^3}s^5n^2+2\left(1-\tfrac{1}{k^3}\right)s^5n^2+\tfrac{4}{3}s^3n^3\\
&\geq\tfrac{2}{k^3}k^3s^8+2\left(1-\tfrac{1}{k^3}\right)k^{3/2}s^{13/2}n+\tfrac{4}{3}k^3 s^6n\\
&=2s^8+\left[2\left(1-\tfrac{1}{k^3}\right)k^{3/2}\sqrt{s}+\tfrac{4}{3}k^3\right]s^6n\\
&\geq 2s^8+18s^6n\\
&> c_0(s)+c_1(s)n.
\end{align*}
Note that the inequality $2\left(1-\tfrac{1}{k^3}\right)k^{3/2}\sqrt{s}+\tfrac{4}{3}k^3\geq 18$ follows immediately from the assumption that $s\geq r_k$ (and this was the motivation for the definition of $r_k$).  Thus, for $n$ sufficiently large, we conclude that if $s\leq \frac{n^{2/3}}{k}$, then $g_n(s)>0,$ meaning that $M_{B_n(s+1)}>M_{B_n(s)}.$  Therefore, we must have $s_n>\tfrac{n^{2/3}}{k}$ for $n$ sufficiently large.\\

Now we demonstrate the upper bound $s_n< n^{2/3}.$  Unlike the lower bound on $s_n$, this upper bound holds for all $n.$  Note that it is trivially true when $n<64$ since then $n^{2/3}>\tfrac{n}{4}$ (and $s\leq \tfrac{n}{4}$ in general).  To prove the bound for $n\geq 64$, we will show that $M_{B_n(s-1)}>M_{B_n(s)}$ when $n\geq 64$ and $s\geq n^{2/3}$.  We expand and simplify the difference
\begin{align*}\label{BridgeDifference2}
h_n(s)=\Phi'_{B_n(s-1)}(1)\cdot \Phi_{B_n(s)}(1)-\Phi'_{B_n(s)}(1)\cdot \Phi_{B_n(s-1)}(1)
\end{align*}
using a computer algebra system to obtain
\[
h_n(s)=c_0(s)+c_1(s)n-c_2(s)n^2-c_3(s)n^3-c_4(s)n^4,
\]
where
\begin{align*}
c_0(s)&=2s^8-24s^7+80s^6-116s^5+112s^4-96s^3+100s^2-70s+20,\\
c_1(s)&=14s^6-44s^5+76s^4-\tfrac{272}{3}s^3+103s^2-\tfrac{247}{3}s+\tfrac{77}{3}, \\
c_2(s)&=2s^5-10s^4+22s^3-32s^2+\tfrac{101}{3}s-\tfrac{71}{6}, \\
c_3(s)&=\tfrac{4}{3}s^3-3s^2+\tfrac{17}{3}s-\tfrac{7}{3},\mbox{ and} \\
c_4(s)&=\tfrac{1}{3}s-\tfrac{1}{6}.
\end{align*}
We make use of the following inequalities, which hold for the values of $s$ indicated:
\begin{alignat*}{2}
c_0(s)&> 2s^8-24s^7  \hspace{0.5cm}& \mbox{ for } s&\geq 1, \\
c_1(s)&> 8s^6  &\mbox{ for } s&\geq 6,\\
c_2(s)&< 2s^5 &\mbox{ for } s&\geq 2,\\
c_3(s)&< \tfrac{4}{3}s^3&\mbox{ for } s&\geq 2,  \mbox{ and}\\
c_4(s)&< \tfrac{1}{3}s & \mbox{ for } s&\geq 1.
\end{alignat*}

Now if $s\geq n^{2/3}$, then the assumption $n\geq 64$ gives $s\geq 16.$  So all of the above inequalities hold.  Using these inequalities along with the assumption $s\geq n^{2/3}$, we have
\begin{alignat*}{2}
c_0(s)+c_1(s)n&> 2s^8-24s^7+8s^6n\\
&>2s^8-24s^7+6s^6n+\tfrac{4}{3}s^6n+\tfrac{1}{3}s^6n \ \ &\\
&=2s^8+6s^6(n-4s)+\tfrac{4}{3}s^6n+\tfrac{1}{3}s^6n&\\
&\geq 2s^8+\tfrac{4}{3}s^6n+\tfrac{1}{3}s^6n \ \ \ &\mbox{(since $s\leq \tfrac{n}{4}$)}\\
&>2s^5n^2+\tfrac{4}{3}s^3n^3+\tfrac{1}{3}sn^{13/3}&\\
&>c_2(s)n^2+c_3(s)n^3+c_4(s)n^4.&
\end{alignat*}
We conclude that if $s\geq n^{2/3}$, then $h_n(s)>0$ and thus $M_{B_n(s-1)}>M_{B_n(s)}$.  Therefore,   $s_n<n^{2/3}$ for all $n$.
\end{proof}

For any $k>1$, we conclude that the limb weight of an optimal bridge of order $n$ is at least $\tfrac{4}{k}(n-2)^{2/3}$ for $n$ sufficiently large (depending on $k$).  This contrasts the situation for batons; the limb weight of an optimal baton of order $n$ is approximately $4\log_2(n).$  We can also use this fact to show that the mean subtree order of the optimal bridge of order $n$ must be significantly lower than the mean subtree order of the optimal baton of order $n$.  We use the following lemma which extends the idea of Result \ref{LeavesLemma}.

\begin{lemma}\label{TotalWeightLemma}
Let $T$ be a tree of order $n>3$ that is not a path and let the total limb weight of $T$ be $w.$  Then
\[
M_T \leq n-\tfrac{w}{2}.
\]
\end{lemma}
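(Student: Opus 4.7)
The plan is to bound $\Phi'_T(1) = \sum_S |S|$ (sum over subtrees of $T$) by splitting each subtree order according to the core/limb decomposition, writing $|S| = |S\cap c(T)| + \sum_{L} |S\cap L|$ (sum over limbs $L$). Since limbs partition $V(T)\setminus V(c(T))$, we have $|c(T)| = n - w$, so the trivial bound $\sum_S |S\cap c(T)| \le |c(T)|\Phi_T(1) = (n-w)\Phi_T(1)$ reduces the lemma to proving the per-limb estimate
\[
\sum_S |S\cap L| \le \tfrac{k}{2}\,\Phi_T(1)
\]
for every limb $L$ of order $k$. Indeed, summing this over all limbs gives $\sum_L \sum_S |S\cap L| \le \tfrac{w}{2}\Phi_T(1)$, so $\Phi'_T(1) \le (n-w)\Phi_T(1) + \tfrac{w}{2}\Phi_T(1) = (n-w/2)\Phi_T(1)$, yielding $M_T \le n - w/2$ upon dividing by $\Phi_T(1)$.

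To establish the per-limb estimate, fix a limb $L = u_1 u_2 \cdots u_k$ with $u_1$ the leaf and $u_k$ adjacent to a vertex $v \in c(T)$. I would classify subtrees of $T$ according to how they meet $L$: $S\cap L$ is always a subpath of $L$, and either (i) $S \cap L = \emptyset$; or (ii) $u_k \notin S$, in which case $S$ must be entirely contained in $L \setminus \{u_k\}$ (since $u_k$ is the only vertex connecting $L$ to the rest of $T$); or (iii) $u_k \in S$, in which case $S \cap L = \{u_i,\ldots,u_k\}$ for some $1\le i \le k$ and $S\setminus L$ is either empty or a subtree of $T\setminus L$ containing $v$. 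Writing $\alpha = \Phi_{T\setminus L}(v;1)$ and $\beta = \Phi_{T\setminus L}(1)$, a direct enumeration using the path formulas in Result~\ref{paths} gives
\[
\Phi_T(1) = \beta + \tbinom{k+1}{2} + k\alpha, \qquad \sum_S |S\cap L| = \tbinom{k+1}{3} + \tbinom{k+1}{2}(1+\alpha).
\]
Substituting and using the identity $\binom{k+1}{3} + \binom{k+1}{2} = \binom{k+2}{3}$, the per-limb estimate simplifies after routine algebra to $\tfrac{k(k+1)(4-k)}{12} \le \tfrac{k}{2}(\beta - \alpha)$, i.e.\ $\tfrac{(k+1)(4-k)}{6} \le \beta - \alpha$.

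For $k\ge 4$ the left side is nonpositive and $\beta - \alpha \ge 0$, so the inequality is immediate. For $k\in\{1,2,3\}$ the bound reduces to $\beta - \alpha \ge 1$, that is, $T\setminus L$ must have at least one subtree avoiding $v$. This is where the hypothesis that $T$ is not a path is used: since $L$ is a limb of the non-path tree $T$, its attachment vertex $v$ has $\deg_T(v) \ge 3$, hence $\deg_{T\setminus L}(v) \ge 2$, so any neighbor of $v$ in $T\setminus L$ gives a singleton subtree of $T\setminus L$ avoiding $v$, yielding $\beta - \alpha \ge 2$. The main obstacle is setting up the subtree partition cleanly enough to obtain the closed-form expressions above; once those are in hand, the remainder is elementary algebra plus the short degree argument needed for $k \le 3$.
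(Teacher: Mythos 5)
Your proof is correct, but it takes a genuinely different route from the paper's. The paper's argument is essentially a one-liner built on Jamison's contraction machinery: starting from $M_T \le M_{T,c(T)}$ (Results~\ref{global_local} and~\ref{nested_trees}(2)), it applies the contraction identity of Result~\ref{nested_trees}(1) with $R = S = c(T)$, noting that $T/c(T)$ is astral of order $w+1$ over the contracted vertex $v$, so $M_{T/c(T),v} = \tfrac{w+2}{2}$, giving $M_{T,c(T)} = \tfrac{w+2}{2} + (n-w) - 1 = n - \tfrac{w}{2}$ \emph{exactly}. Your argument instead bounds $\Phi'_T(1)$ from scratch by splitting $|S|$ across core and limbs, with a crude bound on the core contribution and an exact per-limb enumeration. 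Both are sound. What the paper's approach buys is brevity and an exact identification of what the bound $n - \tfrac{w}{2}$ \emph{is} (the local mean at the core), so the slack is precisely $M_{T,c(T)} - M_T$; what yours buys is self-containedness — it avoids the astral-mean formula and the nested-subtree identities — at the cost of the algebra and the separate degree argument for limbs of order $\le 3$. One small remark: your degree argument for $\beta - \alpha \ge 2$ works uniformly for all $k \ge 1$, so you could in principle run the final inequality $\beta - \alpha \ge 2 \ge \tfrac{(k+1)(4-k)}{6}$ for $k \in \{1,2\}$ without first appealing to integrality; for $k=3$ the right side is $\tfrac{2}{3}$ so either observation suffices.
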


\begin{proof}
Recall that $M_{T,c(T)}$ denotes the average order of those subtrees of $T$ that contain the entire core $c(T)$.  By Results \ref{global_local} and \ref{nested_trees}(2), $M_{T,c(T)}\geq M_T.$  Note that when we contract $c(T)$ to a single new vertex $v$, the resulting tree is astral over $v$ and has order $w+1$.  Hence, by Result \ref{nested_trees}(1), we have
\[
M_{T,c(T)}=\tfrac{w+2}{2}+n-w-1=n-\tfrac{w}{2}. \qedhere
\]
\end{proof}

Thus, since the optimal bridge of order $n$ has total limb weight at least $\tfrac{4}{k}(n-2)^{2/3}$ for fixed $k>1$ and $n$ sufficiently large, it has mean subtree order at most $n-\tfrac{2}{k}(n-2)^{2/3}$ for $n$ sufficiently large, by Lemma \ref{TotalWeightLemma}.  This means that the optimal bridges have significantly lower mean subtree order than the corresponding optimal batons.

\section{A lower bound on the number of leaves in an optimal caterpillar}\label{CaterpillarLeaves}

In view of Jamison's Caterpillar Conjecture we consider here the structure of optimal caterpillars, i.e.\ trees that are optimal in $\C{n}$.  From Corollary \ref{LeavesCorollary}, we already know that if $T$ is optimal in $\C{n}$, then $T$ has at most $2\lceil\log_2 n\rceil+2$ leaves.  We show here that any tree optimal in $\C{n}$ must have at least roughly $\log_2(n)$ leaves.  We develop some general theory along the way which yields similar results for related families of trees.  Throughout, we assume that $n\geq 2$ so that every tree we consider has at least two leaves.  We begin with a simple definition.

\begin{definition}
For any tree $T$ of order $n\geq 2$, the tree obtained from $T$ by deleting all leaf vertices is called the \textit{stem} of $T.$
\end{definition}

Our first step is to bound the number of subtrees of a tree in terms of its number of leaves and the number of subtrees in its stem.

\begin{lemma}\label{StemBound}
Let $T$ be a tree with $\ell\leq n-2$ leaves and let $S$ be the stem of $T$.  Then
\[
N_T\leq N_S\cdot 2^\ell,
\]
where $N_T$ is the number of subtrees of $T$ and $N_S$ is the number of subtrees of $S.$
\end{lemma}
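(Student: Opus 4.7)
The plan is to partition the subtrees of $T$ according to their intersection with $V(S)$. For a subtree $R$ of $T$, set $R' = R \cap V(S)$. Since each leaf of $T$ has degree at most one in $T$, it is also a leaf (or isolated vertex) of $R$, so iteratively pruning these vertices from $R$ leaves either an empty graph or a subtree of $S$. Moreover, given a nonempty subtree $R'$ of $S$, the subtrees of $T$ whose intersection with $V(S)$ equals $R'$ are precisely those of the form $R' \cup L$, where $L$ is an arbitrary subset of the leaves of $T$ adjacent to $R'$. Writing $\lambda(R')$ for the number of such leaves, this contributes $2^{\lambda(R')}$ subtrees of $T$ for each fixed nonempty $R'$.

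Next I would handle the case $R' = \emptyset$. The hypothesis $\ell \leq n-2$ forces $|V(S)|\geq 2$, and so no two leaves of $T$ can be adjacent (otherwise $T$ would be $K_2$). Hence each subtree of $T$ disjoint from $V(S)$ is a single leaf, contributing exactly $\ell$ subtrees. Combining the two cases yields the identity
\[
N_T \;=\; \ell \;+\; \sum_{R' \in \mathcal{S}(S)} 2^{\lambda(R')}.
\]
Since $\lambda(R') \leq \ell$ for every $R'$, the sum on the right is at most $N_S \cdot 2^\ell$, so the claim reduces to absorbing the extra $+\ell$ into the slack of that sum, i.e.\ to showing
\[
\sum_{R' \in \mathcal{S}(S)} \bigl(2^\ell - 2^{\lambda(R')}\bigr) \;\geq\; \ell.
\]

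This last inequality is the main obstacle, and is where I would invoke the convexity of the exponential function. Discarding all but the contributions from single-vertex subtrees of $S$, write $\ell_v$ for the number of leaves of $T$ adjacent to the internal vertex $v$. Then $\sum_{v \in V(S)} \ell_v = \ell$, and by convexity of $2^x$ the sum $\sum_v 2^{\ell_v}$ is maximized (subject to $\ell_v \geq 0$ and $\sum \ell_v = \ell$) when all the weight is concentrated at one vertex, giving $\sum_v 2^{\ell_v} \leq 2^\ell + (|V(S)|-1)$. Hence
\[
\sum_{v \in V(S)} \bigl(2^\ell - 2^{\ell_v}\bigr) \;\geq\; (|V(S)|-1)(2^\ell - 1) \;\geq\; 2^\ell - 1 \;\geq\; \ell,
\]
using $|V(S)|\geq 2$ and $\ell \geq 1$. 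Extending the sum back to all $R' \in \mathcal{S}(S)$ only adds nonnegative terms, which finishes the argument.
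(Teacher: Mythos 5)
Your argument is correct, but the mechanism differs from the paper's in an interesting way. Both proofs rest on the same decomposition: partition the subtrees of $T$ according to their intersection with $V(S)$, noting that a subtree meeting $S$ is determined by the pair $(X\cap V(S),\,X\cap L)$ where $L$ is the leaf set, while the subtrees disjoint from $S$ are exactly the $\ell$ singleton leaves. The paper handles those $\ell$ leftover singletons by extending the map to an injection $\psi:\mathcal{C}(T)\to\mathcal{C}(S)\times\mathcal{P}(L)$, sending $\{v\}$ to $(\{u_v\},\{v\})$ for some stem vertex $u_v$ not adjacent to $v$ (the hypothesis $\ell\leq n-2$ guarantees $|V(S)|\geq 2$, so such a $u_v$ exists); a short case analysis checks injectivity and the bound follows with no further counting. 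You instead compute $N_T$ exactly as $\ell+\sum_{R'}2^{\lambda(R')}$ and then show the slack $\sum_{R'}(2^\ell-2^{\lambda(R')})\geq\ell$ by restricting to singleton stem subtrees and invoking convexity of $x\mapsto 2^x$ on the simplex $\sum_v\ell_v=\ell$ to get $\sum_v 2^{\ell_v}\leq 2^\ell+|V(S)|-1$, finishing with $(|V(S)|-1)(2^\ell-1)\geq 2^\ell-1\geq\ell$ (again using $|V(S)|\geq 2$, which is where $\ell\leq n-2$ enters). Your route buys an exact enumeration formula $N_T=\ell+\sum_{R'}2^{\lambda(R')}$, which is strictly more information than the paper extracts and could be useful elsewhere; the paper's injection is leaner in that it avoids the convexity estimate entirely. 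Both are valid and both hinge on the same use of the hypothesis.
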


\begin{proof}
Let $\mathcal{C}(S)$ denote the collection of vertex sets of all subtrees of $S$, and likewise let $\mathcal{C}(T)$ denote the collection of vertex sets of all subtrees of $T.$  Let $L$ denote the set of leaves of $T.$  We show that there is an injection $\psi:\mathcal{C}(T)\rightarrow \mathcal{C}(S)\times \mathcal{P}(L)$, where $\mathcal{P}(L)$ denotes the power set of $L.$

Let $X\in \mathcal{C}(T).$  If $X\cap V(S)\neq \emptyset$, define
\[
\psi(X)=(X\cap V(S), X\cap L).
\]
Note that in this case the union of the components of $\psi(X)$ is $X$ since
\[
(X\cap V(S))\cup (X\cap L)=X\cap (V(S)\cup L)=X\cap V(T)=X.
\]
On the other hand, if $X\cap V(S)=\emptyset,$ then $X=\{v\}$ for some leaf $v\in L.$  Since $\ell\leq n-2$ there must be some vertex in $S$ that is not adjacent to $v.$  For each $v\in L$, fix a vertex $u_v$ in $S$ that is not adjacent to $v$ and define
\[
\psi(\{v\})=(\{u_v\},\{v\}).
\]
Note that in this case, the union of the components is not a member of $\mathcal{C}(T)$ as $v$ and $u_v$ are not adjacent in $T$.

Now let $X$ and $Y$ be distinct members of $\mathcal{C}(T).$  We show that $\psi(X)\neq \psi(Y).$  We have three cases:

\begin{enumerate}[label=\roman*)]
\item If $X\cap V(S)\neq \emptyset$ and $Y\cap V(S)\neq \emptyset$, then the union of the components of $\psi(X)$ is $X$ and the union of the components of $\psi(Y)$ is $Y.$  Since $X\neq Y$, it follows that $\psi(X)\neq \psi(Y).$

\item If $X\cap V(S)\neq \emptyset$ and $Y\cap V(S)=\emptyset,$ then the union of the components of $\psi(X)$ is $X\in \mathcal{C}(T),$ while the union of the components of $\psi(Y)$ is not in $\mathcal{C}(T)$.  It follows that $\psi(X)\neq \psi(Y).$

\item If $X\cap V(S)=Y\cap V(S)=\emptyset,$ then the second component of $\psi(X)$ is $X$ while the second component of $\psi(Y)$ is $Y.$  Since $X\neq Y$, it follows that $\psi(X)\neq \psi(Y).$
\end{enumerate}
We conclude that $\psi$ is injective, and hence
\[
N_T=|\mathcal{C}(T)|\leq |\mathcal{C}(S)|\cdot |\mathcal{P}(L)|=N_S\cdot 2^\ell. \qedhere
\]
\end{proof}

Written another way, the bound of Lemma \ref{StemBound} is $\tfrac{N_S}{N_T}\geq \tfrac{1}{2^\ell}$.  In words, we have a lower bound on the proportion of subtrees of $T$ that belong to the stem $S$ in terms of the number of leaves of $T$.  This leads to a bound on the mean subtree order of $T$ in terms of the number of leaves of $T$ and the mean subtree order of $S,$ obtained by considering the mean subtree order of $T$ as a weighted average.

\begin{theorem}\label{StemAndLeafBound}
Let $T$ be a tree of order $n$ with $\ell\leq n-2$ leaves and with stem $S$.  Then
\[
M_T\leq n-\tfrac{1}{2^\ell}(n-M_S).
\]
\end{theorem}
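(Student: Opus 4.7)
The plan is to view $M_T$ as a weighted average over two classes of subtrees and then apply Lemma \ref{StemBound}. Specifically, partition the subtrees of $T$ into those that are entirely contained in the stem $S$ (of which there are exactly $N_S$, with mean order $M_S$) and those that contain at least one leaf of $T$ (of which there are $N_T-N_S$, with some mean order $M^\star$).

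First I would write
\[
M_T \;=\; \frac{N_S}{N_T}\, M_S \;+\; \frac{N_T-N_S}{N_T}\, M^\star.
\]
Next I would bound $M^\star$ from above by the trivial estimate $M^\star \le n$, since every subtree of $T$ has order at most $n$. Setting $p = N_S/N_T$, this yields
\[
M_T \;\le\; p\, M_S + (1-p)\, n \;=\; n - p\,(n - M_S).
\]
Now I would use the fact that $M_S \le |V(S)| = n - \ell \le n$, so $n - M_S \ge 0$. Together with Lemma \ref{StemBound}, which gives $p = N_S/N_T \ge 1/2^\ell$, this implies
\[
M_T \;\le\; n - \tfrac{1}{2^\ell}(n - M_S),
\]
as required.

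There is no real obstacle here; the whole argument is a one-line convex-combination computation once one has Lemma \ref{StemBound}. The only thing worth double-checking is that the hypothesis $\ell \le n-2$ ensures $S$ is nonempty (so $M_S$ is well-defined) and that $N_S \ge 1$, which is exactly what is needed for the weighted-average interpretation above to make sense.
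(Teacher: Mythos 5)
Your proof is correct and follows exactly the paper's argument: partition the subtrees into those lying in the stem $S$ versus those containing a leaf, write $M_T$ as the corresponding weighted average, bound the second mean trivially by $n$, and invoke Lemma~\ref{StemBound} to get $N_S/N_T \ge 1/2^\ell$. The one extra remark you make, that $n-M_S\ge 0$ is needed for the direction of the final inequality, is a sensible sanity check that the paper leaves implicit.
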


\begin{proof}
The subtrees of $T$ can be partitioned into two types: those that are contained entirely in $S$ and those that are not (i.e.\ those that contain a leaf of $T$).  Let $M_S$ denote the mean subtree order of $S$ and let $\overline{M_S}$ denote the mean order of those subtrees of $T$ that contain at least one leaf of $T$.  Expressing the mean subtree order of $T$ as a weighted average of $M_S$ and $\overline{M_S}$ gives us
\[
M_T=\frac{N_S}{N_T}\cdot M_S+\frac{N_T-N_S}{N_T}\cdot \overline{M_S}.
\]
We apply the trivial bound $\overline{M_S}\leq n$ and then the bound of Lemma \ref{StemBound} to obtain
\begin{align*}
M_T&\leq \frac{N_S}{N_T}\cdot M_S+\frac{N_T-N_S}{N_T}\cdot n\\
&=n-\frac{N_S}{N_T}\cdot (n-M_S)\\
&\le n-\tfrac{1}{2^\ell}\cdot (n-M_S). \qedhere
\end{align*}
\end{proof}

We see that the bound of Theorem \ref{StemAndLeafBound} gives us more information when $\ell$ and $M_S$ are small relative to the order $n$ of $T$.  Note that the stem of every caterpillar is a path, and the path $P_n$ is known to have minimum mean subtree order in $\T{n}$.  Thus, we expect Theorem \ref{StemAndLeafBound} to give a fairly effective bound on the mean subtree order of any caterpillar, at least in the case that the caterpillar has very few leaves.  We use this idea along with Corollary \ref{MaxMeanCorollary} to prove that if a caterpillar is optimal in $\T{n}$ (or $\C{n}$) then it must have at least $\log_2(n)-\log_2(\log_2(n)+1)-\log_2(3)$ leaves.

\begin{corollary}\label{CaterpillarLeaf}
If $T$ is a caterpillar of order $n$ with $\ell$ leaves and
\[
\ell\leq \log_2\left(\frac{n}{3\log_2(n)+3}\right)=\log_2(n)-\log_2(\log_2(n)+1)-\log_2(3),
\]
then $T$ is not optimal in $\T{n}$ or $\C{n}$.
\end{corollary}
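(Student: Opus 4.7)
The plan is to produce an upper bound on $M_T$ via Theorem~\ref{StemAndLeafBound} and show it falls below the lower bound on the optimum supplied by Corollary~\ref{MaxMeanCorollary}. Concretely, since $T$ is a caterpillar, its stem $S$ is a path of order $n-\ell$, so Theorem~\ref{PathMinimum} gives $M_S=(n-\ell+2)/3$. The hypothesis forces $\ell$ to grow only logarithmically in $n$, which ensures $\ell\leq n-2$ (otherwise the hypothesis is vacuous), so Theorem~\ref{StemAndLeafBound} applies and yields
\[
M_T\leq n-\frac{1}{2^\ell}(n-M_S)=n-\frac{2n+\ell-2}{3\cdot 2^\ell}.
\]

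The next step is to rewrite the hypothesis $\ell\leq\log_2\bigl(n/(3\log_2 n+3)\bigr)$ in the convenient form $\frac{1}{3\cdot 2^\ell}\geq \frac{\log_2 n+1}{n}$. Combining this with the trivial bound $2n+\ell-2\geq 2n$ (valid since any nontrivial tree has $\ell\geq 2$), we obtain
\[
\frac{2n+\ell-2}{3\cdot 2^\ell}\geq 2(\log_2 n+1)\geq \lceil 2\log_2 n\rceil+1,
\]
and therefore $M_T\leq n-\lceil 2\log_2 n\rceil-1$. Finally, Corollary~\ref{MaxMeanCorollary} provides a caterpillar $C_n\in\C{n}\subseteq\T{n}$ with $M_{C_n}>n-\lceil 2\log_2 n\rceil-1\geq M_T$, so $T$ is not optimal in $\C{n}$ or $\T{n}$.

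There is no substantial obstacle here; the argument is a substitution followed by a short inequality chase. The one thing to watch is that the numerical slack in $2(\log_2 n+1)\geq\lceil 2\log_2 n\rceil+1$ is in fact sufficient (it follows immediately from $\lceil x\rceil<x+1$), and that the edge cases where the hypothesis on $\ell$ is vacuous (very small $n$, where the right-hand side is negative) require no separate treatment.
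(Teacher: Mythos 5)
Your proof is correct and takes essentially the same route as the paper: observe that the stem of a caterpillar is a path, apply Theorem~\ref{StemAndLeafBound} with $M_S=(n-\ell+2)/3$, rewrite the hypothesis as a lower bound on $1/2^\ell$, and conclude $M_T\leq n-\lceil 2\log_2 n\rceil-1$, which contradicts Corollary~\ref{MaxMeanCorollary}. The only cosmetic difference is that the paper uses the cruder bound $M_S\leq n/3$ and lands on $n-2\log_2 n-2$ before comparing, whereas you keep the exact value of $M_S$ and make the rounding step $2\log_2 n+2\geq\lceil 2\log_2 n\rceil+1$ explicit.
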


\begin{proof}
Let $T$ be a caterpillar of order $n$ with $\ell\leq \log_2\left(\tfrac{n}{3\log_2(n)+3}\right)$ leaves.  Since $\ell<\log_2\left(\tfrac{n}{3}\right)\leq n-2$, we may apply Theorem \ref{StemAndLeafBound}.  Moreover, since $T$ is a caterpillar, the stem $S$ of $T$ is a path of order $n-\ell.$  Thus $M_S=\tfrac{n-\ell+2}{3}\leq \tfrac{n}{3}$, by Result \ref{paths}.  By Theorem \ref{StemAndLeafBound},
\begin{align*}
M_T\leq n-\tfrac{1}{2^\ell}\cdot (n-M_S).
\end{align*}
The fact that $M_S\leq \tfrac{n}{3}$ and the assumption that $\ell\leq \log_2\left(\tfrac{n}{3\log_2(n)+3}\right)$ yields
\[
M_T\leq n-\tfrac{3\log_2(n)+3}{n}\cdot \tfrac{2}{3}n=n-2\log_2(n)-2.
\]
By Corollary \ref{MaxMeanCorollary}, $T$ cannot be optimal in $\T{n}$ or $\C{n}.$
\end{proof}

Corollary \ref{CaterpillarLeaf} is a particular case of a more general result, stated below.  As long as the stem of a tree $T$ belongs to a family of trees of density at most $k$ for some $k<1$, we can show that if $T$ has too few leaves, then it is not an optimal tree among all trees of order $n.$

\begin{corollary}\label{GeneralLeaf}
Fix any real number $k\in\left[\tfrac{1}{3},1\right)$, and let $\mathcal{S}_k$ be a family of trees such that if $S\in\mathcal{S}_k$ then $M_S\leq k(|V(S)|+2)$ (in particular, the family of trees of density at most $k$ is one such family).  If $T$ is a tree of order $n$ with $\ell$ leaves whose stem $S$ belongs to $\mathcal{S}_k$ and
\[
\ell\leq \log_2\left(\frac{(1-k)n}{2\log_2(n)+2}\right)=\log_2(n)-\log_2(\log_2(n)+1)-\log_2\left(\tfrac{2}{1-k}\right),
\]
then $T$ is not optimal in $\T{n}.$
\end{corollary}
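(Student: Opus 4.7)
The plan is to mirror the proof of Corollary \ref{CaterpillarLeaf} almost verbatim, replacing the specific path bound $M_{P_{n-\ell}}=\tfrac{n-\ell+2}{3}\leq \tfrac{n}{3}$ with the more general inequality $M_S\leq k(n-\ell+2)$ supplied by the hypothesis $S\in\mathcal{S}_k$. First, I would verify that the hypothesis on $\ell$ forces $\ell\leq n-2$, so that Theorem \ref{StemAndLeafBound} applies; since $\log_2\!\left(\tfrac{(1-k)n}{2\log_2(n)+2}\right)\leq \log_2(n)$, this is automatic for all but trivially small $n$, and the tiny edge cases can be absorbed into the statement.

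Since every tree on $n\geq 2$ vertices has $\ell\geq 2$, one has $k(n-\ell+2)\leq kn$, so the hypothesis gives $M_S\leq kn$ and hence $n-M_S\geq (1-k)n$. Plugging this into Theorem \ref{StemAndLeafBound} yields
\[
M_T\;\leq\; n-\tfrac{1}{2^\ell}(n-M_S)\;\leq\; n-\tfrac{(1-k)n}{2^\ell}.
\]
The hypothesis $\ell\leq \log_2\!\left(\tfrac{(1-k)n}{2\log_2(n)+2}\right)$ is equivalent to $\tfrac{(1-k)n}{2^\ell}\geq 2\log_2(n)+2$. Combining these gives
\[
M_T\;\leq\; n-2\log_2(n)-2\;\leq\; n-\lceil 2\log_2(n)\rceil-1,
\]
where the final inequality uses the trivial fact $\lceil 2\log_2(n)\rceil\leq 2\log_2(n)+1$.

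Finally, Corollary \ref{MaxMeanCorollary} supplies a caterpillar $C_n\in\C{n}\subseteq\T{n}$ with mean subtree order strictly greater than $n-\lceil 2\log_2(n)\rceil-1\geq M_T$, so $T$ is not optimal in $\T{n}$. There is no genuine obstacle: this is a direct generalization of Corollary \ref{CaterpillarLeaf}, with the path-specific density $\tfrac{1}{3}$ replaced by the family-wide constant $k$. The only ancillary point worth recording is the parenthetical remark that the family of trees of density at most $k$ does satisfy $M_S\leq k(|V(S)|+2)$; this is immediate since density at most $k$ means $M_S\leq k\cdot|V(S)|\leq k(|V(S)|+2)$, so no separate argument is required.
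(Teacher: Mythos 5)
Your proposal is correct and follows essentially the same route as the paper's proof: apply Theorem \ref{StemAndLeafBound} with the hypothesis bound $M_S\leq k(|V(S)|+2)\leq kn$ (which holds since $|V(S)|=n-\ell\leq n-2$), and then compare against the caterpillar of Corollary \ref{MaxMeanCorollary}. The only difference is that you make the step $n-2\log_2(n)-2\leq n-\lceil 2\log_2(n)\rceil-1$ explicit via $\lceil 2\log_2(n)\rceil\leq 2\log_2(n)+1$, which the paper leaves implicit; the paper also proves $\ell<n-2$ a bit more carefully by observing $k\geq\tfrac{1}{3}$ gives $\ell<\log_2(n/3)<n-2$, avoiding the "edge cases absorbed into the statement" caveat.
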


\begin{proof}
Let $T$ be a tree of order $n$ with $\ell \leq \log_2\left(\frac{(1-k)n}{2\log_2(n)+2}\right)$
leaves and with stem $S\in\mathcal{S}_k$, where $\mathcal{S}_k$ is as above.  Note that $\ell<\log_2\left(\tfrac{n}{3}\right)<n-2,$ so that we may apply Theorem \ref{StemAndLeafBound}.  From the definition of $\mathcal{S}_k$, $M_S\leq k(|V(S)|+2)\leq kn$.  Thus, by Theorem \ref{StemAndLeafBound},
\[
M_T\leq n-\tfrac{1}{2^\ell}\cdot (n-M_S)\leq n-\frac{2\log_2(n)+2}{(1-k)n}\cdot (1-k)n=n-2\log_2(n)-2.
\]
Hence, by Corollary \ref{MaxMeanCorollary}, $T$ cannot be optimal in $\T{n}.$
\end{proof}

Note that there are several obvious families (in addition to the caterpillars) to which Corollary \ref{GeneralLeaf} can be applied.  A family that comes to mind immediately is the collection of trees whose stems are asters; if $S$ is astral over $v$, then $M_S\leq M_{S,v}=\tfrac{|V(S)|+1}{2}< \tfrac{|V(S)|+2}{2}$.  From Corollary \ref{GeneralLeaf}, we conclude that if $T$ is a tree of order $n$ with $\ell$ leaves whose stem is an aster and $\ell\leq\log_2\left(\tfrac{n}{4\log_2(n)+4}\right)$, then $T$ is not optimal in $\T{n}$.  Another example is the collection of trees whose stems are series-reduced trees.  Series-reduced trees were shown to have density at most $\tfrac{3}{4}$ in \cite{vw}.  Therefore, if $T$ is a tree of order $n$ whose stem is a series-reduced tree and $T$ has at most $\log_2\left(\tfrac{n}{8\log_2(n)+8}\right)$ leaves, then $T$ is not optimal in $\T{n}.$

\section{Concluding remarks}

In this article we established the Gluing Lemma which allowed us to determine optimal trees in several families.  Our work on the Gluing Lemma led to a proof that the limbs of any tree optimal in $\T{n}$ all have order $1$, and to an answer to an open problem of Jamsion \cite{j1}.  We showed that among all subdivided double stars of order n with an even (and sufficiently large) number of leaves, the batons are optimal.  We described the asymptotic structure of any optimal tree in the family of all batons of a fixed order and any optimal tree in the family of all bridges of a fixed order.  While Jamison's Caterpillar Conjecture remains open, we demonstrated that the number of leaves in an optimal tree in $\C{n}$ is $\Theta(\log_2 n)$.  It remains an open problem to determine whether the number of leaves in an optimal tree in $\T{n}$ is $\Theta(\log_2 n)$, but we have shown that it is $\mathrm{O}(\log_2 n).$

\section*{Acknowledgements}

We would like to thank the anonymous referees for their excellent comments, which helped to improve the article.  We would also like to thank Wayne Goddard for computing the optimal tree in $\T{19}$ and $\T{20}$, which encouraged us to determine the optimal trees for the next four orders.  Our computation of the optimal tree in $\T{n}$ for $21\leq n\leq 24$ was enabled in part by support provided by WestGrid (\url{www.westgrid.ca}) and Compute Canada (\url{www.computecanada.ca}).


\begin{thebibliography}{1}

\bibitem{has}
J.~Haslegrave, \emph{Extremal results on average subtree density of
  series-reduced trees}, J. Combin. Theory Ser. B \textbf{107} (2014), 26--41.

\bibitem{j1}
R.~E. Jamison, \emph{On the average number of nodes in a subtree of a tree}, J.
  Combin. Theory Ser. B \textbf{35} (1983), 207--223.

\bibitem{j2}
R.~E. Jamison, \emph{Monotonicity of the mean order of subtrees}, J. Combin. Theory
  Ser. B \textbf{37} (1984), 70--78.

\bibitem{mp}
B.~D. McKay and A. Piperno, \emph{Practical Graph Isomorphism, II},
J. Symbolic Comput. \textbf{60} (2014), 94--112.

\bibitem{vw}
A.~Vince and H.~Wang, \emph{The average order of a subtree of a tree}, J.
  Combin. Theory Ser. B \textbf{100} (2010), 161--170.

\bibitem{Wagner2007}
S.~Wagner, \emph{Correlation of graph-theoretical indices}, SIAM J. Discr. Math. \textbf{21} (2007), 33--46.

\bibitem{ww1}
S.~Wagner and H.~Wang, \emph{Indistinguishable trees and graphs}, Graphs
  Combin. \textbf{30} (2014), 1593--1605.

\bibitem{ww}
S.~Wagner and H.~Wang, \emph{On the local and global means of subtree orders}, J. Graph
  Theory \textbf{81}(2) (2016), 154--166.

\bibitem{yy}
W. Yan and Y.-N. Yeh, \emph{Enumeration of subtrees of trees}, Theoret. Comput. Sci. \textbf{369}(1-3) (2006), 256--268.

\end{thebibliography}

\providecommand{\bysame}{\leavevmode\hbox to3em{\hrulefill}\thinspace}
\providecommand{\MR}{\relax\ifhmode\unskip\space\fi MR }
\providecommand{\MRhref}[2]{%
  \href{http://www.ams.org/mathscinet-getitem?mr=#1}{#2}
}
\providecommand{\href}[2]{#2}

\appendix

\section{Demonstrating that $h_{n,m}(s)>0$ for $1\leq s\leq m-1$}

Recall that
\begin{equation}
\begin{split}\label{ha}
h_{n,m}(s)&=\overline{A}_{n,m}\left[B_{n,m}(s)-B_{n,m}(m)\right]-A_{n,m}\left[\overline{B}_{n,m}(s)-\overline{B}_{n,m}(m)\right]\\
& \ \ \ \ -\left[\overline{B}_{n,m}(s)\cdot B_{n,m}(m)-\overline{B}_{n,m}(m)\cdot B_{n,m}(s)\right],
\end{split}
\end{equation}
where
\begin{align}
A_{n,m}&=2m+\tbinom{n-2m-1}{2}+2^{2m},\label{A}\\
\overline{A}_{n,m}&=2m+\tbinom{n-2m}{3}+(n-m)\cdot 2^{2m},\label{Abar}\\
B_{n,m}(s)&=(n-2m-1)\cdot (2^s+2^{2m-s}), \mbox{ and}\label{B}\\
\overline{B}_{n,m}(s)&=\tfrac{1}{2}(n-2m-1)\left[(n-2m+s)\cdot 2^s+(n-s)\cdot 2^{2m-s}\right].\label{BBar}
\end{align}

We first consider each of the three bracketed expressions in (\ref{ha}) separately.  Substituting the expressions given by (\ref{B}) and (\ref{BBar}), and then factoring, we obtain
\begin{align*}
B_{n,m}(s)-B_{n,m}(m)&=(n-2m-1)\cdot (2^s+2^{2m-s})-2(n-2m-1)\cdot 2^m\\
&=\tfrac{1}{2^s}(n-2m-1)(2^{2s}-2\cdot 2^{m+s}+2^{2m})\\
&=\tfrac{1}{2^s}(n-2m-1)(2^m-2^s)^2,
\end{align*}
\begin{align*}
\overline{B}&_{n,m}(s)-\overline{B}_{n,m}(m)\\
&=\tfrac{1}{2}(n-2m-1)\left[(n-2m+s)\cdot 2^s+(n-s)\cdot 2^{2m-s}\right]\\
&\ \ \ \ -\tfrac{1}{2}(n-2m-1)\left[2(n-m)\cdot 2^m\right]\\
&=\tfrac{1}{2} (n-2m-1)\left[n(2^s+2^{2m-s}-2\cdot 2^{m})+2m(2^m-2^s)-s(2^{2m-s}-2^s) \right]\\
&= \tfrac{1}{2} (n-2m-1)\left[\tfrac{n}{2^s}(2^m-2^s)^2+2m(2^m-2^s)-\tfrac{s}{2^s}(2^m-2^s)(2^m+2^s)\right]\\
&=\tfrac{1}{2}\cdot \tfrac{1}{2^s}(n-2m-1)(2^m-2^s)\left[n(2^m-2^s)+2m\cdot 2^s-s(2^m+2^s)\right]\\
&=\tfrac{1}{2}\cdot \tfrac{1}{2^s}(n-2m-1)(2^m-2^s)\left[n(2^m-2^s)+2(m-s)\cdot 2^s-s(2^m-2^s)\right]\\
&=\tfrac{1}{2}\cdot \tfrac{1}{2^s}(n-2m-1)(2^m-2^s)^2\left[n+2(m-s)\cdot \tfrac{2^s}{2^m-2^s}-s\right],
\end{align*}
and
\begin{align*}
&\overline{B}_{n,m}(s)\cdot B_{n,m}(m)-\overline{B}_{n,m}(m)\cdot B_{n,m}(s)\\
& \ =\tfrac{1}{2}(n-2m-1)\left[(n-2m+s)\cdot 2^s+(n-s)\cdot 2^{2m-s}\right]\cdot 2(n-2m-1)\cdot 2^m\\
& \  \ \ \ \ -\tfrac{1}{2}(n-2m-1)\left[2(n-m)\cdot 2^m\right]\cdot \left[(n-2m-1)\cdot (2^s+2^{2m-s})\right]\\
& \ =(n-2m-1)^2\cdot 2^m\cdot \bigl[(n-2m+s)\cdot 2^s+(n-s)\cdot 2^{2m-s}\\
& \  \hspace{6.5cm}-(n-m)\cdot (2^s+2^{2m-s})\bigr]\\
& \ =(n-2m-1)^2\cdot 2^m\cdot \left[-2m\cdot 2^s+s\cdot 2^s-s\cdot 2^{2m-s}+m\cdot (2^s+2^{2m-s})\right]\\
& \ =(n-2m-1)^2\cdot 2^m\cdot \left[(m-s)\cdot 2^{2m-s}+(s-m)\cdot 2^s\right]\\
& \ =(n-2m-1)^2(m-s)(2^{2m-s}-2^s)\cdot 2^m\\
& \ =\tfrac{1}{2^s}(n-2m-1)^2(m-s)(2^m-2^s)(2^m+2^s)\cdot 2^m\\
& \ =\tfrac{1}{2^s}(n-2m-1)^2(2^m-2^s)^2(m-s)\cdot \tfrac{2^m+2^s}{2^m-2^s}\cdot 2^m.
\end{align*}
Note that we have written these three expressions so that they each have a factor of $\tfrac{1}{2^s}(n-2m-1)(2^m-2^s)^2$, which is clearly positive.  Substituting back into (\ref{ha}), we obtain
\begin{align*}
h_{n,m}(s)&=\tfrac{1}{2^s}(n-2m-1)(2^m-2^s)^2 g_{n,m}(s),
\end{align*}
where
\begin{align*}
g_{n,m}(s)&=\overline{A}_{n,m}-\tfrac{1}{2}\left[n+2(m-s)\cdot\tfrac{2^s}{2^m-2^s}-s\right]A_{n,m}\\
& \ \ \ \ -(n-2m-1)(m-s)\cdot \tfrac{2^m+2^s}{2^m-2^s}\cdot 2^m.
\end{align*}
Thus, it suffices to show that $g_{n,m}(s)>0$ for $1\leq s\leq m-1.$

First we claim that
\begin{align}\label{ineq}
(m-s)\cdot \tfrac{2^s}{2^m-2^s}\leq 1
\end{align}
for all $s\leq m-1$.  Setting $k=m-s$ (note that $k\geq 1$ since $s\leq m-1$), we have
\[
(m-s)\cdot \tfrac{2^s}{2^m-2^s}=(m-s)\cdot\tfrac{1}{2^{m-s}-1}=\tfrac{k}{2^k-1},
\]
and $k\leq 2^k-1$ is easily verified for all $k\geq 1$ by induction, which completes the proof of the claim.  We apply (\ref{ineq}) along with $s\geq 1$ to obtain:
\begin{align*}
n+2(m-s)\cdot \tfrac{2^s}{2^m-2^s}-s\leq n+2-s\leq n+1
\end{align*}
and
\begin{align*}
(m-s)\cdot \frac{2^m+2^s}{2^m-2^s}&= (m-s)\cdot \frac{2^m-2^s+2\cdot 2^s}{2^m-2^s}\\
&= (m-s)+2(m-s)\cdot\frac{2^s}{2^m-2^{s}}\\
&\leq (m-s)+2\\
&\leq m+1.
\end{align*}

\medskip

\noindent For the remainder of the proof we consider two cases.

\medskip

\noindent{\bf Case 1:} $n > 2m+2$.

In this case,
\[
\overline{A}_{n,m}=2m+\tbinom{n-2m}{3}+(n-m)\cdot 2^{2m}\geq (n-m)\cdot 2^{2m}
\]
and
\[
A_{n,m}=2m+\tbinom{n-2m-1}{2}+2^{2m}\leq \tfrac{(n-1)(n-2m-1)}{2}+2^{2m},
\]
the former being obvious and the latter easily verified by expanding the binomial coefficient.  Applying all of the above inequalities, we find
\begin{align*}
g_{n,m}(s)&=\overline{A}_{n,m}-\tfrac{1}{2}\left[n+2(m-s)\cdot\tfrac{2^s}{2^m-2^s}-s\right]A_{n,m}\\
& \ \ \ \ -(n-2m-1)(m-s)\cdot \tfrac{2^m+2^s}{2^m-2^s}\cdot 2^m\\
&\geq (n-m)\cdot 2^{2m}-\tfrac{1}{2}(n+1)\left[\tfrac{(n-1)(n-2m-1)}{2}+2^{2m}\right]\\
& \ \ \ \ -(m+1)(n-2m-1)\cdot 2^m\\
&=\tfrac{1}{2}(n-2m-1)\cdot 2^{2m}-\tfrac{1}{2}(n+1)\tfrac{(n-1)(n-2m-1)}{2}\\
& \ \ \ \ -(m+1)(n-2m-1)\cdot 2^m\\
&=\tfrac{1}{2}(n-2m-1)\left[2^{2m}-\tfrac{1}{2}(n+1)(n-1)-2(m+1)\cdot 2^m\right]\\
&> \tfrac{1}{2}(n-2m-1)\left[2^{2m}-\tfrac{1}{2}n^2-2(m+1)\cdot 2^m\right],
\end{align*}
where the last inequality follows from the fact that
\[
(n+1)(n-1)=n^2-1< n^2.
\]
Finally, using the assumptions $m\geq \log_2(n)$ and $m\geq 5$, the latter of which implies $2^m> 4(m+1)$ (verified by induction), we have
\begin{align*}
2^{2m}-\tfrac{1}{2}n^2-2(m+1)\cdot 2^m&=\tfrac{1}{2}\cdot 2^{2m}-\tfrac{1}{2}n^2+\tfrac{1}{2}\cdot 2^{2m}-2(m+1)\cdot 2^m\\
&> \tfrac{1}{2}n^2-\tfrac{1}{2}n^2+\tfrac{1}{2}\cdot 4(m+1)\cdot 2^m-2(m+1)\cdot 2^m\\
&=0.
\end{align*}
Therefore, $g_{n,m}(s)>0$ and hence $h_{n,m}(s)>0$ for $1\leq s\leq m-1$ when $2m+2<n$.\\

\noindent{\bf Case 2:} $n=2m+2$.

In this case,
\[
\overline{A}_{n,m}=2m+(m+2)\cdot 2^{2m} \mbox{ and } A_{n,m}=2m+2^{2m}.
\]
Thus,
\begin{align*}
g_{n,m}(s)&=2m+(m+2)2^{2m} -\tfrac{1}{2}\left[2m+2+2(m-s)\cdot\tfrac{2^s}{2^m-2^s}-s\right]\left[2m+2^{2m} \right]\\
& \ \ \ \ -(m-s)\cdot \tfrac{2^m+2^s}{2^m-2^s}\cdot 2^m\\
&\ge 2m+(m+2)2^{2m} -\tfrac{1}{2}\left[2m+2+2-s\right]\left[2m+2^{2m} \right]-(m+1)\cdot 2^m\\
&\ge 2m+(m+2)2^{2m} -\tfrac{1}{2}\left[2m+3\right]\left[2m+2^{2m} \right]-(m+1)\cdot 2^m\\
&=2^{2m-1}-2m^2-m-(m+1)2^m\\
&=\left[2^{2m-2}-(2m^2+m)\right]+\left[2^{2m-2}-(m+1)2^m\right]\\
&=\left[2^{2m-2}-m(2m+1)\right]+2^m\cdot \left[2^{m-2}-(m+1)\right]\\
&>0
\end{align*}

\noindent where in the last inequality we use the facts that $2^{2m-2}-m(2m+1)>0$ and $2^{m-2}-(m+1) >0$ for $m\geq 5.$  Both of these inequalities can be verified by induction.  Thus, $g_{m,n}(s)>0$ in the case that $n=2m+2$ as well.  This completes the proof.

\section{Bounding the coefficient functions of $f_n(s)$} \label{IneqApp}

Recall that
\begin{align*}
c_4(s)&=\tfrac{1}{6}\cdot 2^s-\tfrac{1}{6},\\
c_3(s)&=2^{2s}-\tfrac{1}{6}(5s+14)\cdot 2^s+\tfrac{4}{3}(s+1),\\
c_2(s)&=2\cdot 2^{3s}-\tfrac{1}{2}(9s+16)\cdot 2^{2s}\\
& \ \ \ \ +\tfrac{1}{6}(6s^2+84s+59)\cdot 2^s-\tfrac{1}{6}(24s^2+60s+23,)\\
c_1(s)&=(6s+8)\cdot 2^{3s}-\tfrac{1}{2}(12s^2+71s+42)\cdot 2^{2s}\\
& \ \ \ \ -\tfrac{1}{6}(4s^3-156s^2-347s-106)\cdot 2^s\\
& \ \ \ \ -\tfrac{1}{3}(16s^3+72s^2+82s+14), \mbox{ and}\\
c_0(s)&=4\cdot 2^{4s}-(4s^2+18s+14)\cdot 2^{3s}+(2s^3+33s^2+60s+18)\cdot 2^{2s}\\
& \ \ \ \ +\tfrac{1}{3}(4s^4-44s^3-199s^2-208s-30)\cdot 2^s\\
& \ \ \ \ +\tfrac{1}{3}(8s^4+56s^3+118s^2+82s+6).
\end{align*}

For all $s\geq 1$ we have $2^s\geq 2s$, which is easily verified by induction.  We use this inequality frequently in the following arguments.

\begin{enumerate}[label=\roman*)]
\item $c_4(s)<\tfrac{1}{6}\cdot 2^s$ for $s\geq 1$.

This inequality is obvious.

\item $c_3(s)<2^{2s}$ for $s\geq 1$

Let $s\geq 1.$  Expanding and then applying the inequality $2^s\geq 2s$, and finally applying the inequality $s\geq 1$ itself, we have
\begin{align*}
c_3(s)&=2^{2s}-\tfrac{5}{6}s\cdot 2^s-\tfrac{7}{3}\cdot 2^s+\tfrac{4}{3}s+1\\
&<2^{2s}-\tfrac{5}{6}\cdot 2^s-\tfrac{14}{3}s+\tfrac{4}{3}s+1\\
&<2^{2s}-\tfrac{5}{6}s\cdot 2^s-\tfrac{7}{3}s\\
&<2^{2s}
\end{align*}
as desired.

\item $c_2(s)<2\cdot 2^{3s}$ for $s\geq 1.$

Let $s\geq 1.$  We immediately drop the last term of $c_2(s)$ (since it is negative) and then once again apply the inequality $2^{s}\geq 2s$.
\begin{align*}
c_2(s)&<2\cdot 2^{3s}-\tfrac{1}{2}(9s+16)\cdot 2^{2s}+\tfrac{1}{6}(6s^2+84s+59)\cdot 2^s\\
&\leq 2\cdot 2^{3s}-(9s^2+16s-s^2-14s-\tfrac{59}{6})\cdot 2^s\\
&=2\cdot 2^{3s}-(8s^2+2s-\tfrac{59}{6})\cdot 2^s\\
&<2\cdot 2^{3s},
\end{align*}
where the last inequality follows since $8s^2+2s-\tfrac{59}{6}>0$ for $s\geq 1.$

\item $c_1(s)<(6s+8)\cdot 2^{3s}$ for $s\geq 1.$

Let $s\geq 1.$  We immediately drop the final term of $c_1(s)$ (which is negative), and again use the inequality $2^s\geq 2s.$
\begin{align*}
c_1(s)&< (6s+8)\cdot 2^{3s}-\tfrac{1}{2}(12s^2+71s+42)\cdot 2^{2s}\\
& \ \ \ \ -\tfrac{1}{6}(4s^3-156s^2-347s-106)\cdot 2^s\\
&\leq (6s+8)\cdot 2^{3s}-(12s^3+71s^2+42s+\tfrac{2}{3}s^3-26s^2-\tfrac{347}{6}s-\tfrac{53}{3})\cdot 2^s\\
&=(6s+8)\cdot 2^{3s}-(\tfrac{38}{3}s^3+50s^2-\tfrac{95}{6}s-\tfrac{53}{3})\cdot 2^s\\
&<(6s+8)\cdot 2^{3s},
\end{align*}
where the last inequality follows from the fact that $\tfrac{38}{3}s^3+50s^2-\tfrac{95}{6}s-\tfrac{53}{3}>0$ for $s\geq 1.$

\item $c_0(s)<4\cdot 2^{4s}$ for $s\geq 1.$

For $s=1$ and $s=2,$ one can verify this bound directly.  Now let $s\geq 3$.  Regrouping and then applying the inequality $2^s\geq 2s$, we have
\begin{align*}
c_0(s)&=4\cdot 2^{4s}-(4s^2+18s+14)\cdot 2^{3s}+(2s^3+33s^2+60s+18)\cdot 2^{2s}\\
& \ \ \ \ +\tfrac{4}{3}s^4\cdot 2^s-\tfrac{1}{3}(44s^3+199s^2+208s+30)\cdot 2^s\\
& \ \ \ \ +\tfrac{1}{3}(8s^4+56s^3+118s^2+82s+6)\\
&\leq 4\cdot 2^{4s}-(8s^3+36s^2+28s-2s^3-33s^2-60s-18)\cdot 2^{2s}+\tfrac{4}{3}s^4\cdot 2^s\\
& \ \ \ \ -\tfrac{1}{3}(44s^4+199s^3+208s^2+30s-8s^4-56s^3-118s^2-82s-6)\\
&=4\cdot 2^{4s}-(6s^3+3s^2-32s-18)\cdot 2^{2s}+\tfrac{4}{3}s^4\cdot 2^s \\
& \ \ \ \ -\tfrac{1}{3}(36s^4+143s^3+90s^2-52s-6).
\end{align*}
At this point, we note that $36s^4+143s^3+90s^2-52s-6>0$.  Thus, we can drop the final term in the expression above.  We also break off part of the second term to take care of the third (using the inequality $2^s\geq 2s$ once again).  This gives
\begin{align*}
c_0(s)&<4\cdot 2^{4s}-(\tfrac{16}{3}s^3+3s^2-32s-18)\cdot 2^{2s}-\tfrac{2}{3}s^3\cdot 2^{2s}+\tfrac{4}{3}s^4\cdot 2^s\\
&\leq 4\cdot 2^{4s}-(5s^3+3s^2-32s-18)\cdot 2^{2s}-\tfrac{4}{3}s^4\cdot 2^s+\tfrac{4}{3}s^4\cdot 2^s\\
&=4\cdot 2^{4s}-(5s^3+3s^2-32s-18)\cdot 2^{2s}\\
&<4\cdot 2^{4s},
\end{align*}
where the last inequality follows from the fact that $5s^3+3s^2-32s-18>0$ for $s\geq 3$ (this is easily verified).

\item $c_4(s)>0$ for $s\geq 1$.

This inequality is obvious.

\item $c_3(s)>0$ for $s\geq 2$.

Let $s\geq 2$.  We drop the third term of $c_3(s)$ immediately, and then apply the inequality $2^s\geq 2s$ in the first term:
\begin{align*}
c_3(s)&>2^{2s}-\tfrac{1}{6}(5s+14)\cdot 2^s\\
&\geq \left(2s-\tfrac{5}{6}s-\tfrac{14}{6}\right)\cdot 2^s\\
&=\tfrac{7}{6}(s-2)\cdot 2^s\\
&\geq 0.
\end{align*}

\item $c_2(s)>\tfrac{5}{3}\cdot 2^{3s}$ for $s\geq 7.$

Let $s\geq 7$.  We show that $c_2(s)-\tfrac{5}{3}\cdot 2^{3s}> 0$.  Note that for $s\geq 7,$ we have $2^s> \tfrac{27}{2}s+24$ (verified by induction), and clearly we have $2^s\geq 4$.  Applying these inequalities, we have
\begin{align*}
c_2(s)-\tfrac{5}{3}\cdot 2^{3s}&=\tfrac{1}{3}\cdot 2^{3s}-\tfrac{1}{2}(9s+16)\cdot 2^{2s}\\
& \ \ \ \ +\tfrac{1}{6}(6s^2+84s+59)\cdot 2^s-\tfrac{1}{6}(24s^2+60s+23)\\
&>\tfrac{1}{3}(\tfrac{27}{2}s+24)\cdot 2^{2s} -\tfrac{1}{2}(9s+16)\cdot 2^{2s}\\
& \ \ \ \ +\tfrac{1}{6}(24s^2+336s+236)-\tfrac{1}{6}(24s^2+60s+23)\\
&=\tfrac{1}{6}(276s+213)\\
&>0.
\end{align*}

\item $c_1(s)> 0$ for $s\geq 5$.

Let $s\geq 5$.  Using the inequality $2^s\geq 2s$ in the first term, we obtain:
\begin{align*}
c_1(s)&=(6s+8)\cdot 2^{3s}-\tfrac{1}{2}(12s^2+71s+42)\cdot 2^{2s}\\
& \ \ \ \ -\tfrac{1}{6}(4s^3-156s^2-347s-106)\cdot 2^s -\tfrac{1}{3}(16s^3+72s^2+82s+14)\\
&\geq \left(12s^2+16s-6s^2-\tfrac{71}{2}s-21\right)\cdot 2^{2s}\\
& \ \ \ \ -\tfrac{1}{6}(4s^3-156s^2-347s-106)\cdot 2^s -\tfrac{1}{3}(16s^3+72s^2+82s+14)\\
&= \left(6s^2-\tfrac{39}{2}s-21\right)\cdot 2^{2s}\\
& \ \ \ \ -\tfrac{1}{6}(4s^3-156s^2-347s-106)\cdot 2^s -\tfrac{1}{3}(16s^3+72s^2+82s+14)
\end{align*}
We verify that $6s^2-\tfrac{39}{2}s-21>0$ for $s\geq 5$, so that we may apply the inequality of $2^s\geq 2s$ to the first term once again:
\begin{align*}
c_1(s)&\geq \left(6s^2-\tfrac{39}{2}s-21\right)\cdot 2^{2s}\\
& \ \ \ \ -\tfrac{1}{6}(4s^3-156s^2-347s-106)\cdot 2^s -\tfrac{1}{3}(16s^3+72s^2+82s+14)\\
&\geq \left(12s^2-39s^2-42s-\tfrac{4}{6}s^3+26s^2+\tfrac{347}{6}s+\tfrac{53}{3}\right)\cdot 2^{s}\\
& \ \ \ \  -\tfrac{1}{3}(16s^3+72s^2+82s+14)\\
&=\left(\tfrac{34}{3}s^3-13s^2+\tfrac{95}{6}s+\tfrac{53}{3}\right)\cdot 2^{s}-\tfrac{1}{3}(16s^3+72s^2+82s+14)
\end{align*}
We verify that $\tfrac{34}{3}s^3-13s^2+\tfrac{95}{6}s+\tfrac{53}{3}>0$ for $s\geq 5$, so that we may apply $2^s\geq 2s$ once more:
\begin{align*}
c_1(s)&\geq \left(\tfrac{34}{3}s^3-13s^2+\tfrac{95}{6}s+\tfrac{53}{3}\right)\cdot 2^{s}-\tfrac{1}{3}(16s^3+72s^2+82s+14)\\
&\geq \tfrac{68}{3}s^4-26s^3+\tfrac{95}{3}s^2+\tfrac{106}{3}s-\tfrac{16}{3}s^3-24s^2-\tfrac{82}{3}s-\tfrac{14}{3}\\
&\geq \tfrac{68}{3}s^4-\tfrac{94}{3}s^3+\tfrac{23}{3}s^2+8s-\tfrac{14}{3},
\end{align*}
which is easily verified to be positive for $s\geq 5$.

\item $c_0(s)> \tfrac{19}{6}\cdot 2^{4s}$ for $s\geq 10.$

Let $s\geq 10$.  We show equivalently that $c_0(s)-\tfrac{19}{6}\cdot 2^{4s}>0$.  We may immediately drop the final term of $c_0(s)$ since it is positive, so we have
\begin{align*}
c_0(s)-\tfrac{19}{6}\cdot 2^{4s}&>\tfrac{5}{6}\cdot 2^{4s}-(4s^2+18s+14)\cdot 2^{3s}\\
& \ \ \ \ +(2s^3+33s^2+60s+18)\cdot 2^{2s}\\
& \ \ \ \ +\tfrac{1}{3}(4s^4-44s^3-199s^2-208s-30)\cdot 2^s.
\end{align*}
Now we use the inequality
\[
\tfrac{5}{6}\cdot 2^s>4s^2+18s+14,
\]
which is verified for $s\geq 10$ by induction.  This gives
\begin{align*}
c_0(s)-\tfrac{19}{6}\cdot 2^{4s}&>(2s^3+33s^2+60s+18)\cdot 2^{2s}\\
& \ \ \ \ +\tfrac{1}{3}(4s^4-44s^3-199s^2-208s-30)\cdot 2^s.
\end{align*}
Finally, we apply the inequality $2^s\geq 2s$ to obtain
\begin{align*}
c_0(s)-\tfrac{19}{6}\cdot 2^{4s}&> (4s^4+66s^3+120s^2+36s)\cdot 2^s\\
& \ \ \ \ +\tfrac{1}{3}(4s^4-44s^3-199s^2-208s-30)\cdot 2^s\\
&=\tfrac{1}{3}(16s^4+154s^3+161s^2-100s-30)\\
&>0.
\end{align*}

\end{enumerate}
\end{document}